\numberwithin{equation}{section}
\definecolor{darkred}{rgb}{0.8,0,0}
\DeclareMathOperator{\Span}{\operatorname{span}}
\DeclareMathOperator{\sinc}{\operatorname{sinc}}
\DeclareMathOperator{\diag}{\operatorname{diag}}
\DeclareMathOperator{\supp}{\operatorname{supp}}
\DeclareMathOperator{\sparsity}{\operatorname{sp}}
\theoremstyle{definition}
\newtheorem{definition}{Definition}[section]
\newtheorem{remark}[definition]{Remark}
\newtheorem{example}[definition]{Example}
\newtheorem{theorem}[definition]{Theorem}
\newtheorem{lemma}[definition]{Lemma}
\newtheorem{corollary}[definition]{Corollary}
\title{Mass lumping and stabilization for immersogeometric analysis}
\author[1]{Yannis Voet \thanks{yannis.voet@epfl.ch}}
\author[1]{Espen Sande \thanks{espen.sande@epfl.ch}}
\author[1]{Annalisa Buffa \thanks{annalisa.buffa@epfl.ch}}
\affil[1]{\small MNS, Institute of Mathematics, École polytechnique fédérale de Lausanne, Station 8, CH-1015 Lausanne, Switzerland}
\date{\today}
\begin{document}

\maketitle

\begin{abstract}
Trimmed (multi-patch) geometries are the state-of-the-art technology in computer-aided design for industrial applications such as automobile crashworthiness. In this context, fast solution techniques extensively rely on explicit time integration schemes in conjunction with mass lumping techniques that substitute the consistent mass with a (usually diagonal) approximation. For smooth isogeometric discretizations, Leidinger \cite{leidinger2020explicit} first showed that mass lumping removed the dependency of the critical time-step on the size of trimmed elements. This finding has attracted considerable attention but has unfortunately overshadowed another more subtle effect: mass lumping may disastrously impact the accuracy of low frequencies and modes, potentially inducing spurious oscillations in the solution. In this article, we provide compelling evidence for this phenomenon and later propose a stabilization technique based on polynomial extensions that restores a level of accuracy comparable to boundary-fitted discretizations.

\noindent \textbf{Keywords}:
Trimming, Isogeometric analysis, Explicit dynamics, Mass lumping.
\end{abstract}

\section{Introduction and background}
Isogeometric analysis (IGA) is a spline-based discretization technique for solving partial differential equations (PDEs) \cite{hughes2005isogeometric,cottrell2009isogeometric}. Originally conceived as an extension of classical finite element methods (FEM), it offers exclusive advantages such as greater smoothness yielding superior approximation properties \cite{bazilevs2006isogeometric,bressan2019approximation,sande2020explicit} and exact representation of common geometries by employing spline functions from computer-aided design (CAD), such as B-splines and Non-Uniform Rational B-splines (NURBS). The success of IGA has been overwhelming, ranging from fluid dynamics \cite{tagliabue2014isogeometric,hsu2015dynamic} and biomedical applications \cite{morganti2015patient,lorenzo2019computer} to structural \cite{cottrell2006isogeometric,cottrell2007studies,hughes2014finite} and fracture mechanics \cite{borden2014higher}. In particular, it is widely praised in structural analysis for removing the so-called ``optical branches'' from the discrete spectrum \cite{cottrell2006isogeometric,cottrell2007studies}. Nevertheless, a few inaccurate ``outlier'' eigenvalues persist and actually diverge with polynomial and mesh refinement \cite{gallistl2017stability}, thereby severely constraining the step size of explicit time integration methods (referred to as the Courant–Friedrichs–Lewy (CFL) condition). One classic example is the critical time step of the (undamped) central difference method
\begin{equation}
\label{eq: CFL_central_difference}
    \Delta t_c= \frac{2}{\omega_n}
\end{equation}
where $\omega_n$ is the largest discrete frequency of the system \cite{hughes2012finite,bathe2006finite}. Explicit time integration is by far the most popular method in structural analysis, especially for wave propagation problems such as blasts and impacts where short term effects are prominent. Contrary to implicit time integration methods, explicit ones only require solving linear systems with the mass matrix and the latter is commonly substituted with a diagonal lumped mass matrix, thereby drastically speeding up the solution process \cite{hughes2012finite,bathe2006finite}.

Mass lumping is nearly as old as the finite element method itself and accurate techniques based on nodal quadrature have been known ever since the 1970s \cite{fried1975finite,cohen1994higher} and sometimes connect to the algebraic row-sum \cite{hughes2012finite} and diagonal scaling (or HRZ) \cite{hinton1976note} methods for the spectral element method \cite{duczek2019mass}. Additionally, mass lumping often increases the critical time step, a property formally proven for the row-sum technique and nonnegative matrices \cite{voet2023mathematical}. However, the row-sum technique is only second order accurate for IGA, independently of the spline order \cite{cottrell2006isogeometric}, even for generalizations thereof \cite{voet2023mathematical,voet2024mass} and unfortunately, extending the nodal quadrature method to IGA is not immediate due to the non-interpolatory nature of the basis functions. While restoring interpolation recovers some accuracy, it comes at the unaffordable price of globally supported functions, unsuited for large scale simulations \cite{li2022significance,li2024interpolatory}. Instead, several authors have suggested combining (approximate) dual functions with the row-sum technique in a Petrov-Galerkin framework \cite{anitescu2019isogeometric,nguyen2023towards,hiemstra2023higher}. Although the method delivers greater accuracy, several technical difficulties still hold it back. Most notably, the support of (approximate) dual functions is larger than the standard B-splines and (approximate) duality only holds in the parametric domain. Rescaling the dual functions by the Jacobian determinant, as suggested by the authors, complicates the assembly of the stiffness matrix and the overall computational savings remain unclear. Moreover, the method is currently limited to rather simple academic examples and the authors do not offer any clear strategy for extending it to more realistic settings such as trimmed geometries.

Due to the aforementioned technical issues, practitioners still heavily rely on the simpler row-sum or diagonal scaling methods, which are more general but significantly less accurate for IGA \cite{cottrell2006isogeometric,adam2015stable}. As a matter of fact, despite its limited accuracy, the row-sum technique still represents the state-of-the-art lumping technique in commercial software packages, such as LS-DYNA \cite{hallquist2005ls}. Those computer programs support highly complex industrial CAD models, sometimes made up of several hundred or even thousands of trimmed NURBS patches \cite{leidinger2019explicit}. Trimming is a ubiquitous boolean operation whereby parts of a geometry are merged or discarded. While trimming changes the visualization of the model, it does not change its mathematical description. Within individual patches, trimming curves may create arbitrarily small trimmed elements, causing severe stability and conditioning issues for the underlying discretization \cite{marussig2017stable,de2017condition,de2019preconditioning,de2023stability}. As a matter of fact, the largest generalized eigenvalue for a consistent mass approximation may even become unbounded, preventing explicit time integration unless some form of stabilization \cite{stoter2023critical,eisentrager2024eigenvalue} or hybrid implicit-explicit (IMEX) schemes \cite{fassbender2024implicit} are used.

For isogeometric discretizations, Leidinger \cite{leidinger2020explicit} apparently found another remarkably simple way of preventing this effect: by lumping the mass matrix, the largest eigenvalue suddenly becomes bounded for sufficiently smooth discretizations. This fact was later independently verified in several numerical studies \cite{messmer2021isogeometric,messmer2022efficient,coradello2021accurate,stoter2023critical,radtke2024analysis} and only recently investigated theoretically in \cite{bioli2024theoretical}. While the analysis therein confirmed the numerical observations, it also revealed a much more subtle effect: although the largest eigenvalue may indeed become bounded, the smallest eigenvalue now converges to zero as the trimmed elements get smaller and the polynomial degree increases. To the best of our knowledge, this effect has largely been overlooked and its impact on the accuracy of the solution still remains unclear.

Our article will provide greater insight and is structured as follows: After recalling some standard notations and summarizing early findings in \Cref{se: model_problem}, we provide compelling evidence in \Cref{se: motivation} showing the disastrous effect mass lumping may have on the solution. Explanations, some rigorous, others more intuitive, are later laid out in \Cref{se: analysis} and a solution based on polynomial extension techniques is described in \Cref{se: stabilization} for improving the accuracy. This solution, covering both smooth isogeometric discretizations and standard finite elements, is applied in \Cref{se: experiments} to cure the examples of \Cref{se: motivation} and restore a level of accuracy and CFL condition comparable to boundary-fitted discretizations. Finally, conclusions are drawn in \Cref{se: conclusion}.


\section{Model problem and discretization}
\label{se: model_problem}
In a nutshell, structural dynamics studies how objects or structures deform over time in response to a dynamic loading. Typical examples include wave propagation, plate and beam bending or buckling and solid deformations \cite{hughes2012finite,bathe2006finite,zienkiewicz2005finite,belytschko2014nonlinear}. From a mathematical perspective, these processes are often described by hyperbolic PDEs featuring a second order time derivative and second (or higher order) spatial derivatives. The prototypical example is the wave equation, which will serve as model problem. Let $\Omega \subset \mathbb{R}^d$ be an open connected physical domain with Lipschitz boundary $\partial \Omega = \overline{\partial \Omega_D \cup \partial \Omega_N}$ where $\partial \Omega_D \cap \partial \Omega_N = \emptyset$. The physical domain $\Omega \subset \widehat{\Omega}$ is embedded in a fictitious (or ambient) domain $\widehat{\Omega}$. Let $[0,T]$ be the time domain with $T>0$ denoting the final time. We look for $u \colon \Omega \times [0,T] \to \mathbb{R}$ such that 
\begin{align}
 \partial_{tt} u(\mathbf{x},t)-\Delta u(\mathbf{x},t) &=f(\mathbf{x},t) & &\text{ in } \Omega \times (0,T], \label{eq: wave_equation} \\
 u(\mathbf{x},t)&=0 & &\text{ on } \partial \Omega_D \times (0,T], \label{eq: dirichlet_bc} \\
 \partial_n u(\mathbf{x},t)&=h(\mathbf{x},t) & &\text{ on } \partial \Omega_N \times (0,T], \label{eq: neumann_bc} \\
 u(\mathbf{x},0)&=u_0(\mathbf{x}) & &\text{ in } \Omega,  \label{eq: initial_disp} \\
 \partial_t u(\mathbf{x},0)&=v_0(\mathbf{x}) & &\text{ in } \Omega. \label{eq: initial_vel}
\end{align}
The PDE described by \cref{eq: wave_equation} is supplemented with Dirichlet (\cref{eq: dirichlet_bc}) and Neumann (\cref{eq: neumann_bc}) boundary conditions and initial conditions (\cref{eq: initial_disp,eq: initial_vel}). For the sake of the presentation, we only consider homogeneous Dirichlet boundary conditions and assume that $\partial \Omega_D \subset \partial \widehat{\Omega}$. Letting $V \subseteq H^1(\Omega)$ and defining the Bochner space
\begin{equation*}
    V_T = \{v \in L^2((0,T); V) \colon \ \partial_t v \in L^2((0,T); L^2(\Omega)), \ \partial_{tt} v \in L^2((0,T); V^*) \},
\end{equation*}
the weak form of the problem reads: find $u \in V_T$ such that for almost every time $t \in (0,T)$
\begin{align}
 \langle\partial_{tt} u(t), v\rangle + (\nabla u(t), \nabla v)_{L^2(\Omega)^d} &= (f(t),v)_{L^2(\Omega)} + (h(t),v)_{L^2(\partial \Omega_N)} & \forall v \in V, \label{eq: continuous_weak_form} \\
 (u(0),v)_{L^2(\Omega)} &= (u_0,v)_{L^2(\Omega)} & \forall v \in V, \nonumber \\
 \langle \partial_t u(0),v \rangle &= \langle v_0,v \rangle & \forall v \in V, \nonumber
\end{align}
where we assume that $f \in L^2((0,T); L^2(\Omega))$, $h \in L^2((0,T); L^2(\partial \Omega_N))$, $u_0 \in L^2(\Omega)$ and $v_0 \in V^*$ and $\langle .,.\rangle$ denotes the duality pairing between $V^*$ and $V$ (see e.g. \cite{evans2022partial}). For a standard Galerkin discretization of the spatial variables, one seeks an approximate solution $u^h(.,t)$ in a finite dimensional subspace $V^h \subset V$. In isogeometric analysis, $V^h = \mathcal{S}_{h,p,k}$ is a spline space parameterized by a mesh size $h$, spline order $p$ and continuity $0 \leq k \leq p-1$, all assumed uniform in each direction for simplicity. One conveniently computes a basis for $\mathcal{S}_{h,p,k}$ by first constructing the standard B-spline basis $\widehat{\mathcal{B}}$ over the fictitious domain $\widehat{\Omega}$ and later retaining all basis functions whose support intersects the physical domain; i.e.
\begin{equation*}
    \mathcal{B} = \{B_i \in \widehat{\mathcal{B}} \colon \mathrm{supp}(B_i) \cap \Omega \neq \emptyset \} \subseteq \widehat{\mathcal{B}}.
\end{equation*}
The tensor product basis $\widehat{\mathcal{B}}$ follows a standard construction thoroughly explained in textbooks and early literature on isogeometric analysis; see e.g. \cite{hughes2005isogeometric,cottrell2009isogeometric}. Denoting $I$ the index set of $\mathcal{B}$, $n=|\mathcal{B}|=\dim(\mathcal{S}_{h,p,k})$ the space dimension, and 
\begin{equation*}
    V^h_T = \{v \in L^2((0,T); V^h) \colon \ \partial_t v \in L^2((0,T); V^h), \ \partial_{tt} v \in L^2((0,T); V^h) \},
\end{equation*}
the Galerkin method seeks an approximate solution $u^h \in V^h_T$ such that for almost every time $t \in (0,T)$
\begin{align}
 b(\partial_{tt} u^h(t), v^h) + a(u^h(t),v^h) &= F(v^h) & \forall v^h \in V^h, \label{eq: discrete_weak_form} \\
 (u^h(0),v^h)_{L^2(\Omega)} &= (u_0,v^h)_{L^2(\Omega)} & \forall v^h \in V^h, \nonumber \\
 (\partial_t u^h(0),v^h)_{L^2(\Omega)} &= (v_0,v^h)_{L^2(\Omega)} & \forall v^h \in V^h, \nonumber
\end{align}
where $a,b \colon V^h \times V^h \to \mathbb{R}$ are symmetric bilinear forms defined as
\begin{equation}
\label{eq: global_contribution}
    a(u^h,v^h) = (\nabla u^h, \nabla v^h)_{L^2(\Omega)^d}, \quad \text{and} \quad b(u^h,v^h) = (u^h,v^h)_{L^2(\Omega)}
\end{equation}
and $F \colon V^h \to \mathbb{R}$ is the linear functional 
\begin{equation*}
    F(v^h) = (f(t),v^h)_{L^2(\Omega)} + (h(t),v^h)_{L^2(\partial \Omega_N)}.
\end{equation*}
For the time being, $b(u^h,v^h)$ is the standard $L^2$ inner product but will later take on different definitions for lumped mass approximations. Expanding the approximate solution in the B-spline basis and testing against every function in $\mathcal{B}$, \eqref{eq: discrete_weak_form} is equivalent to a second order system of ordinary differential equations (ODEs) \citep{hughes2012finite,quarteroni2009numerical}
\begin{align}
\label{eq: semi_discrete_pb}
\begin{split}
M\ddot{\mathbf{u}}(t) + K\mathbf{u}(t) &= \mathbf{f}(t) \qquad \text{for } t \in [0,T], \\
\mathbf{u}(0) &= \mathbf{u}_0,\\
\dot{\mathbf{u}}(0) &= \mathbf{v}_0.
\end{split}
\end{align}
where $\mathbf{u}(t)$ is the vector of coefficients of $u^h(\mathbf{x},t)$ in the basis $\mathcal{B}$, 
\begin{equation}
    \label{eq: stiffness_mass_matrix}
    K_{ij} = a(B_i,B_j) \quad \text{and} \quad M_{ij} = b(B_i,B_j) \qquad i,j \in I
\end{equation}
are the stiffness and (consistent) mass matrices (i.e. the Gram matrices of $a(u,v)$ and $b(u,v)$ in the basis $\mathcal{B}$) and
\begin{equation*}
    f_i(t) = F(B_i) \qquad i \in I.
\end{equation*}
If $\partial \Omega_D \neq \emptyset$, the stiffness and mass matrices are both symmetric positive definite and the mass matrix is additionally nonnegative, owing to the nonnegativity of the B-spline basis. In immersogeometric analysis, the basis construction is completely independent of the physical domain $\Omega$ and while it certainly simplifies the initial setup, it might also create basis functions $B_i$ whose active support $\mathrm{supp}(B_i) \cap \Omega$ is arbitrarily small, potentially causing stability and conditioning issues. For a consistent mass approximation, the former leads to extremely large generalized eigenvalues and practically prevents solving \eqref{eq: semi_discrete_pb} with explicit time integration methods. In practice though, the consistent mass is oftentimes substituted with an ad hoc (diagonal) lumped mass matrix. The row-sum technique \cite{hughes2012finite} is a natural choice for nonnegative matrices and is defined as
\begin{equation}
\label{eq: row_sum}
\mathcal{L}(M)=\diag(d_1,\dots,d_n)
\end{equation}
where $d_i=\sum_{j=1}^n m_{ij}$ for $i=1,\dots,n$. The row-sum technique was generalized in \cite{voet2023mathematical} to form sequences of (block) banded matrices that monotonically converge to the consistent mass in the Loewner ordering. This construction was later further extended to arbitrary single-patch, multi-patch and trimmed geometries. See \cite{voet2024mass} for the details.

In his Ph.D. thesis, Leidinger \cite{leidinger2020explicit} first showed that the largest generalized eigenvalues for the row-sum lumped mass did not depend on the trimming configuration if the continuity was sufficiently large. This astonishing finding, sometimes considered a ``blessing of smoothness'', was later confirmed in several independent numerical studies \cite{messmer2021isogeometric,messmer2022efficient,coradello2021accurate,stoter2023critical,radtke2024analysis} and recently analyzed in \cite{bioli2024theoretical}. The apparently beneficial effect of trimming was widely promoted in early literature \cite{messmer2021isogeometric,messmer2022efficient}, where the authors even suggested artificially embedding the physical domain in a larger fictitious domain to trim off open knot vectors. 

However, the tide is now turning as increasing evidence indicates small trimmed elements may instead bring in spurious modes in the low-frequency spectrum \cite{coradello2021accurate,radtke2024analysis,bioli2024theoretical} and their impact on the discrete solution is becoming the object of intense scrutiny. Intuitively, one would not expect any contribution at all from spurious low-frequency modes given they are highly localized on small trimmed elements. However, numerical evidence from existing literature is not as clear-cut. Already in \cite{messmer2021isogeometric}, the authors lacked proper stabilization and did not report any results of transient simulations with small trimmed elements despite conducting a step size analysis. This deficiency was later reiterated in \cite{messmer2022efficient}. Around the same time, Coradello \cite{coradello2021accurate} noticed severe inaccuracies in the low-frequency spectrum and modes and stated that ``the impact of these spurious eigenvalues on the accuracy of a dynamic simulation is not yet fully understood''. Similar issues have lately resurfaced in \cite{radtke2024analysis} and their impact on the solution is still being debated. In the next section, we provide compelling evidence highlighting the disastrous effect of spurious low-frequency modes.

\section{Motivation}
\label{se: motivation}
Although mass lumping often completely removes the dependency of the CFL on small cut elements, it may come at a very high price. Namely, high-frequency spurious eigenmodes for the consistent mass may become low-frequency eigenmodes when it is lumped. If activated, these spurious low-frequency modes may completely ruin the solution. The following examples will illustrate it. In all our examples, the mesh size is chosen fine enough such that the accuracy for a consistent mass formulation is satisfactory and all our experiments are done using GeoPDEs \cite{vazquez2016new}, an open source Matlab/Octave software package for isogeometric analysis.

\begin{example}[1D counter-example]
\label{ex: counter_example_1D}
For demonstrating how spurious modes appear in the low-frequency spectrum, we first solve the 1D Laplace eigenvalue problem on the trimmed domain $\Omega = (0, 0.75+\epsilon) \subset (0,1) = \widehat{\Omega}$, with $\epsilon=10^{-6}$. Dirichlet boundary conditions are prescribed on the left side and Neumann boundary conditions on the right side (see \Cref{fig: 1D_trimming}). The exact eigenvalues are
\begin{equation}
    \lambda_j = \left(\frac{2j-1}{2}\frac{\pi}{|\Omega|}\right)^2 \quad j=1,2,\dots, \infty.
\end{equation}

\begin{figure}[H]
    \centering
    \includegraphics[width=0.6\linewidth]{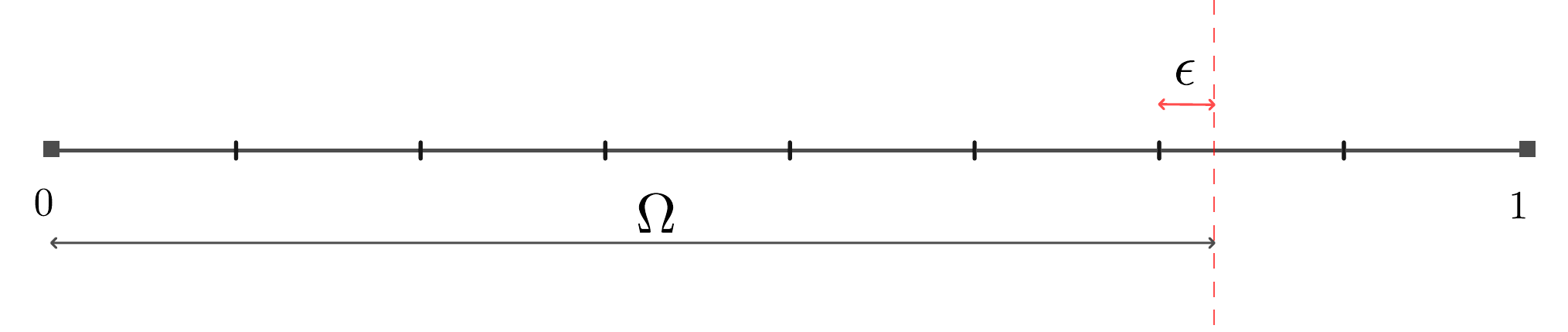}
    \caption{Trimmed line segment}
    \label{fig: 1D_trimming}
\end{figure}

For discretizing the problem, the standard B-spline basis is constructed over $\widehat{\Omega}$ using a fine mesh with $256$ subdivisions. As usual, the basis over the physical domain is obtained by retaining the basis functions whose support intersects $\Omega$. The normalized spectrum of $(K,M)$ and $(K,\mathcal{L}(M))$ for maximally smooth discretizations of degree $1$ to $4$ is shown in \Cref{fig: 1D_Laplace_trimming_normalized_spectrum_eps_1e-6_default} and reproduces some results of Coradello's Ph.D. thesis \cite{coradello2021accurate}. As Leidinger \cite{leidinger2020explicit} and others found out, the largest eigenvalue for the lumped mass is unaffected by the small trimmed element of size $\epsilon$ if the smoothness is at least $C^1$. However, the rest of the figure is much less pleasing: while the largest eigenvalues of $(K,M)$ blow-up (but are not shown for visualization purposes), the smallest eigenvalues of $(K,\mathcal{L}(M))$ converge to zero as the spline order increases, once again confirming the findings in \cite{bioli2024theoretical}. A closer examination actually reveals an apparent shifting whereby inaccurate eigenvalues lie in between accurate ones, as shown in \cite{radtke2024analysis}. To illustrate it, \Cref{fig: 1D_Laplace_trimming_spectrum_p3_eps_1e-6_zoom,fig: 1D_Laplace_trimming_spectrum_p4_eps_1e-6_zoom} zoom in on the first $14$ eigenvalues for degrees $p=3$ and $p=4$, respectively, and pair the eigenvalues of $(K,\mathcal{L}(M))$ to the closest exact eigenvalue. \Cref{fig: 1D_Laplace_trimming_spectrum_p3_eps_1e-6_zoom} reveals that in between two well approximated eigenvalues lies a fictitious eigenvalue, which does not approximate anything. The situation worsens as $\epsilon$ decreases or the degree increases: for $p=4$, this same eigenvalue has now decreased by several orders of magnitude and will eventually converge to zero as the degree further increases \cite{bioli2024theoretical}. However, the remaining eigenvalues are still relatively accurate. Thus, the picture sketched in \Cref{fig: 1D_Laplace_trimming_normalized_spectrum_eps_1e-6_default} is mostly an artifact of the ordering and a priori, the situation is far less dramatic than it originally suggested. Indeed, eliminating the spurious eigenvalues and relabeling the remaining eigenvalues in \Cref{fig: 1D_Laplace_trimming_normalized_spectrum_eps_1e-6_relabeled} reveals how accurate some of the lower frequencies are. However, in more realistic settings where the exact eigenvalues are unknown, distinguishing physical eigenvalues from spurious ones is impossible without information on the eigenfunctions. Yet, spurious eigenvalues in the low-frequency spectrum should not be ignored as they bring in spurious eigenfunctions, shown in \Cref{fig: 1D_Laplace_trimming_dynamics_p3_eps_1e-6_eigenfunctions} for $p=3$. These spurious eigenfunctions closely resemble outlier eigenfunctions for the consistent mass and if they are unluckily activated during a simulation, they might contribute to the solution given that they are associated to small frequencies. Unfortunately, this is a real possibility as the next part of our example shows.


\begin{figure}[H]
     \centering
     \begin{subfigure}[t]{0.48\textwidth}
    \centering
    \includegraphics[width=\linewidth]{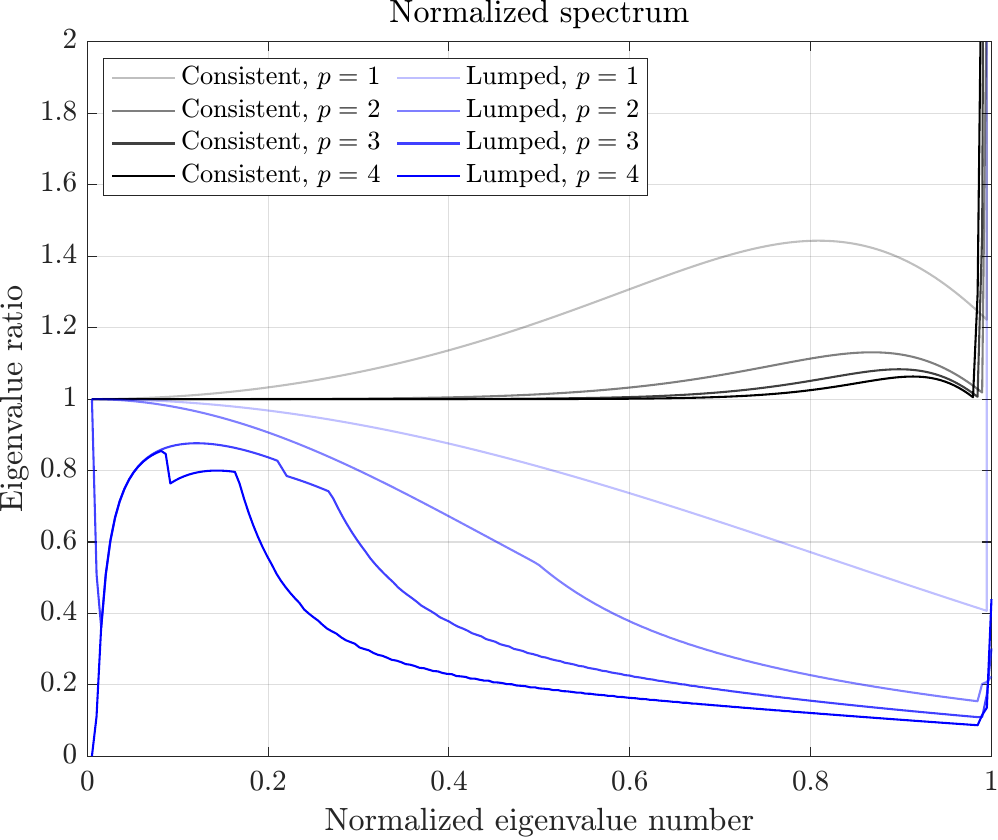}
    \caption{Eigenvalues labeled in increasing algebraic order}
    \label{fig: 1D_Laplace_trimming_normalized_spectrum_eps_1e-6_default}
     \end{subfigure}
     \hfill
     \begin{subfigure}[t]{0.48\textwidth}
    \centering
    \includegraphics[width=\linewidth]{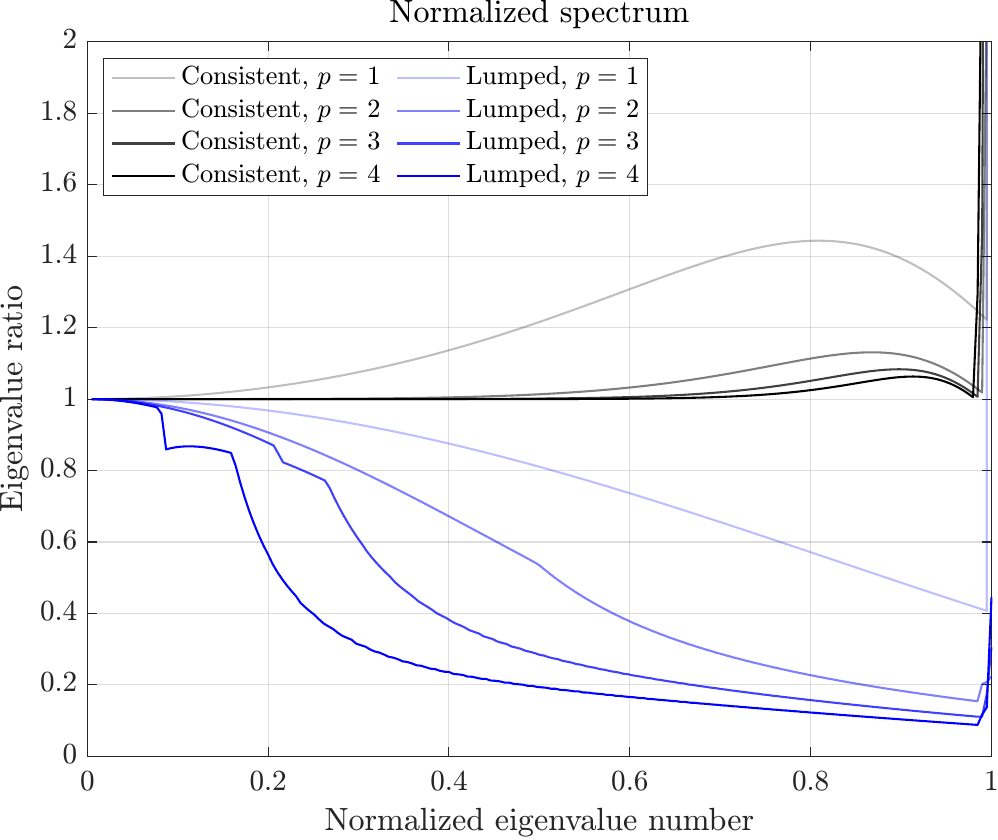}
    \caption{With spurious eigenvalues removed}
    \label{fig: 1D_Laplace_trimming_normalized_spectrum_eps_1e-6_relabeled}
     \end{subfigure}
     \hfill
    \caption{Ratio of approximate over exact eigenvalues for consistent and lumped mass approximations}
    \label{fig: 1D_Laplace_trimming_normalized_spectrum_eps_1e-6}
\end{figure}

\begin{figure}[H]
     \centering
     \begin{subfigure}[t]{0.48\textwidth}
    \centering
    \includegraphics[width=\textwidth]{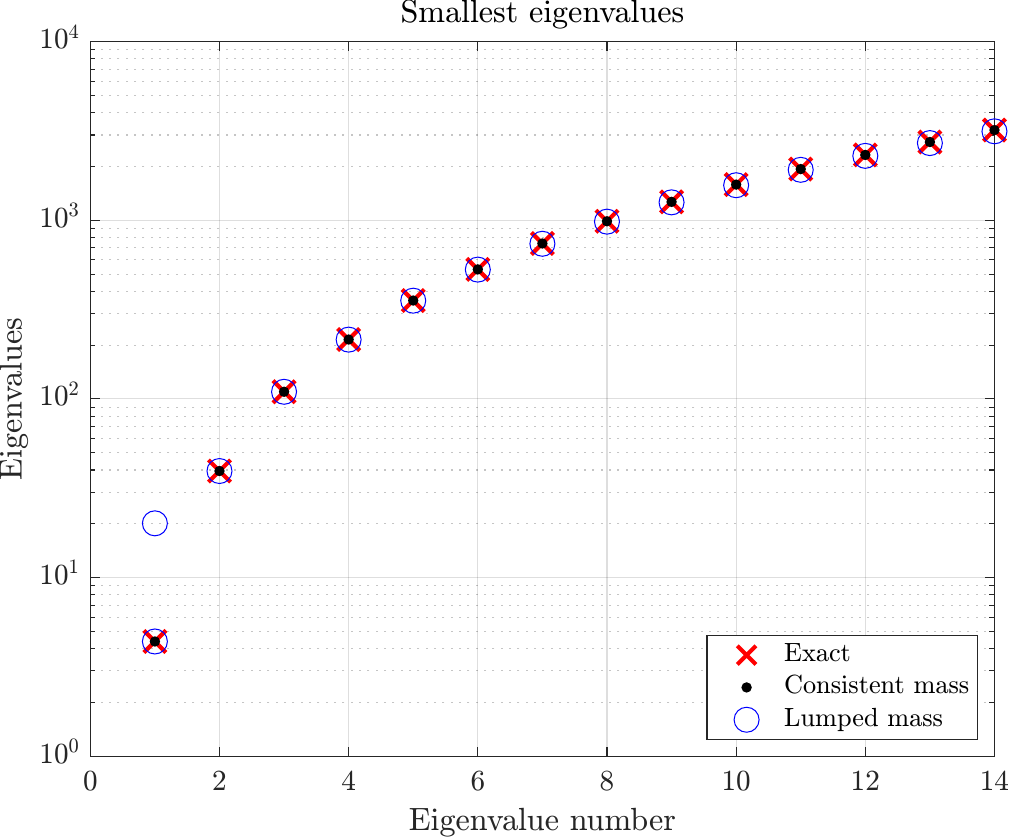}
    \caption{$p=3$}
    \label{fig: 1D_Laplace_trimming_spectrum_p3_eps_1e-6_zoom}
     \end{subfigure}
     \hfill
     \begin{subfigure}[t]{0.48\textwidth}
    \centering
    \includegraphics[width=\textwidth]{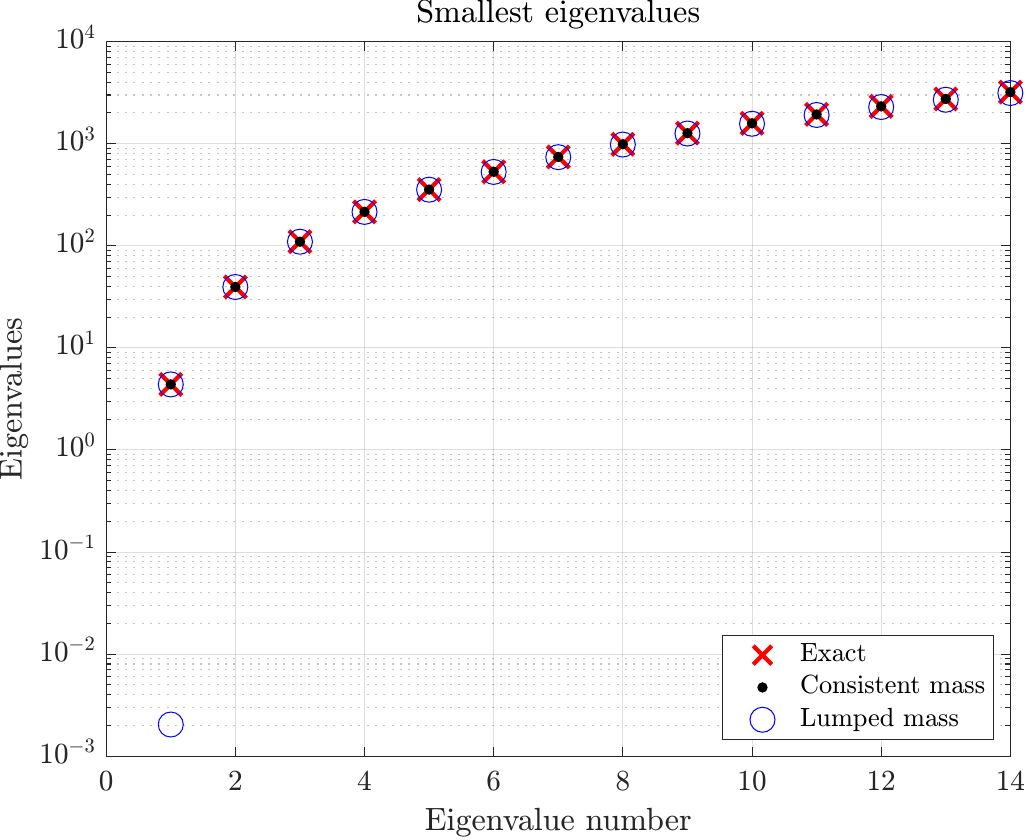}
    \caption{$p=4$}
    \label{fig: 1D_Laplace_trimming_spectrum_p4_eps_1e-6_zoom}
     \end{subfigure}
     \hfill
    \caption{Close-up on the first $14$ eigenvalues. The eigenvalues of $(K,\mathcal{L}(M))$ are relabeled based on the closest exact eigenvalue.}
    \label{fig: 1D_Laplace_trimming_spectrum_zoom}
\end{figure}

\begin{figure}[H]
    \centering
    \includegraphics[width=\linewidth]{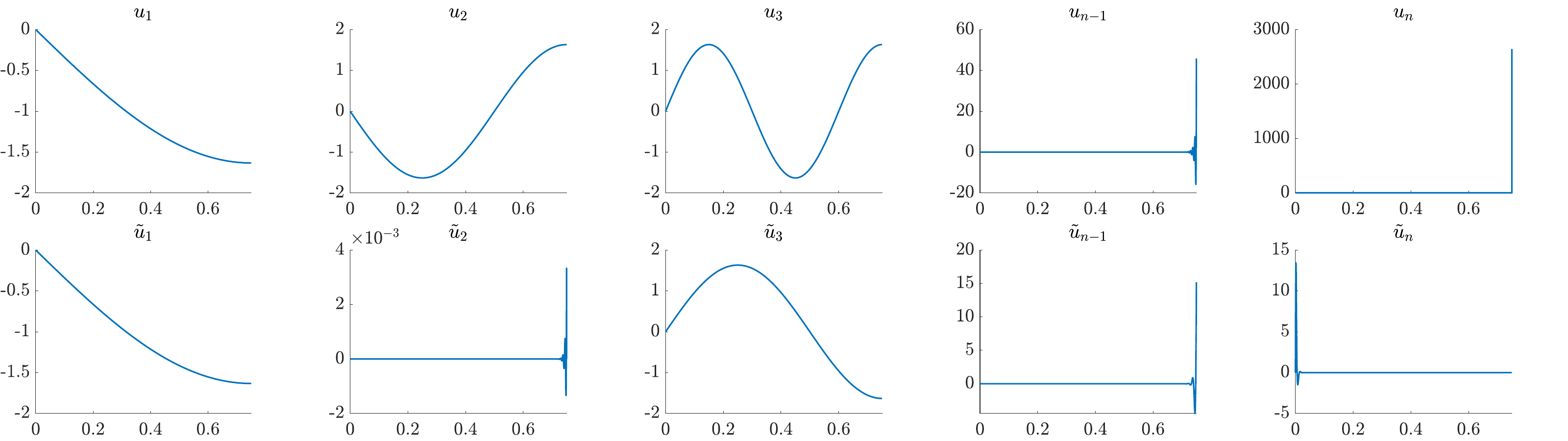}
    \caption{Three first and two last eigenfunctions of $(K,M)$ (top row) and $(K,\mathcal{L}(M))$ (bottom row) for $p=3$. The labeling of eigenfunctions matches the labeling of eigenvalues.}
    \label{fig: 1D_Laplace_trimming_dynamics_p3_eps_1e-6_eigenfunctions}
\end{figure}

We now solve the wave equation on the same domain $\Omega$ over the time interval $[0,3]$ with the manufactured solution $u(x,t)=q(x)\sin(n\pi t)$ where
\begin{equation}
\label{eq: function_q(x)}
    q(x) = C^{(\frac{x}{x_r})^a}x\sin\left(\frac{\pi}{x_l-x}\right)
\end{equation}
with parameters $x_r=0.75+\epsilon$, $x_l=\frac{1}{w}+x_r$, $C=8$, $a=8$, $w=15$ and $n=3$. The function $q(x)$ is shown in \Cref{fig: 1D_Laplace_trimming_dynamics_eps_1e-6_function} and becomes increasingly oscillatory near the trimmed boundary. The right-hand side, initial and boundary conditions are enforced such that $u(x,t)$ is the exact solution of the wave equation. In particular, it satisfies homogeneous Dirichlet boundary conditions on the non-trimmed (left) boundary and Neumann boundary conditions on the trimmed (right) boundary. The solution for the lumped mass is computed with the central difference method using the step size given by \cref{eq: CFL_central_difference} (and multiplied by a safeguarding factor of $0.85$). For the consistent mass, the outliers prevent using an explicit method and we instead rely on an implicit unconditionally stable Newmark method with the same step size as for the lumped mass. Snapshots of the solutions are shown in \Cref{fig: 1D_Laplace_trimming_dynamics_solution_snapshots_p3_eps_1e-6_Cp_1_explicit} for a cubic $C^2$ discretization. While the solution for the consistent mass remains consistently accurate, the solution for the lumped mass is wholly inaccurate and periodically features oscillations near the trimmed boundary. As shown in \Cref{fig: 1D_Laplace_trimming_dynamics_solution_snapshots_p3_eps_1e-6_C0_explicit}, the issue persists when reducing to $C^0$ continuity, although the oscillations are now highly localized at the trimmed boundary and are actually undetectable from the global $L^2$ error. However, the beneficial effect smoothness had on the CFL condition is lost and we were forced to employ an unconditionally stable Newmark method both for the consistent and lumped mass. Nevertheless, we used the same step size as in the explicit case with maximal smoothness to ensure the time discretization error was similar. 

These observations underline the subtle and mixed role smoothness plays for lumped mass approximations. While greater smoothness ensures the step size remains bounded independently of small trimmed elements, comparing \Cref{fig: 1D_Laplace_trimming_dynamics_solution_snapshots_p3_eps_1e-6_Cp_1_explicit,fig: 1D_Laplace_trimming_dynamics_solution_snapshots_p3_eps_1e-6_C0_explicit} also highlights its detrimental effect on the accuracy of the lumped mass approximation. Indeed, for $C^0$ discretizations, the oscillations are confined near the trimmed boundary while they tend to propagate inside the domain when increasing the smoothness, thereby leading to larger errors. As evidenced in \Cref{fig: 1D_Laplace_trimming_dynamics_L2_error}, increasing the degree or reducing $\epsilon$ further amplifies the error. In particular, \Cref{fig: 1D_Laplace_trimming_dynamics_L2_error_LM_p3_explicit} also reveals that the error oscillates over time and \Cref{se: analysis} will later shed some light on this behavior. We must stress that these artifacts are not a consequence of time integration. As a matter of fact, as shown in \Cref{se: analysis}, the fully discrete solution closely matches the exact solution of the semi-discrete problem \eqref{eq: semi_discrete_pb}, which admits a closed form solution in this example. Thus, the issue is solely tied to mass lumping. More specifically, the culprit is the spatial part of the exact solution $q(x)$, which is unfortunately exceedingly well described by one of the spurious eigenmodes for the lumped mass ($\tilde{u}_2$ in \Cref{fig: 1D_Laplace_trimming_dynamics_p3_eps_1e-6_eigenfunctions}).

\begin{figure}[H]
    \centering
    \includegraphics[width=0.5\linewidth]{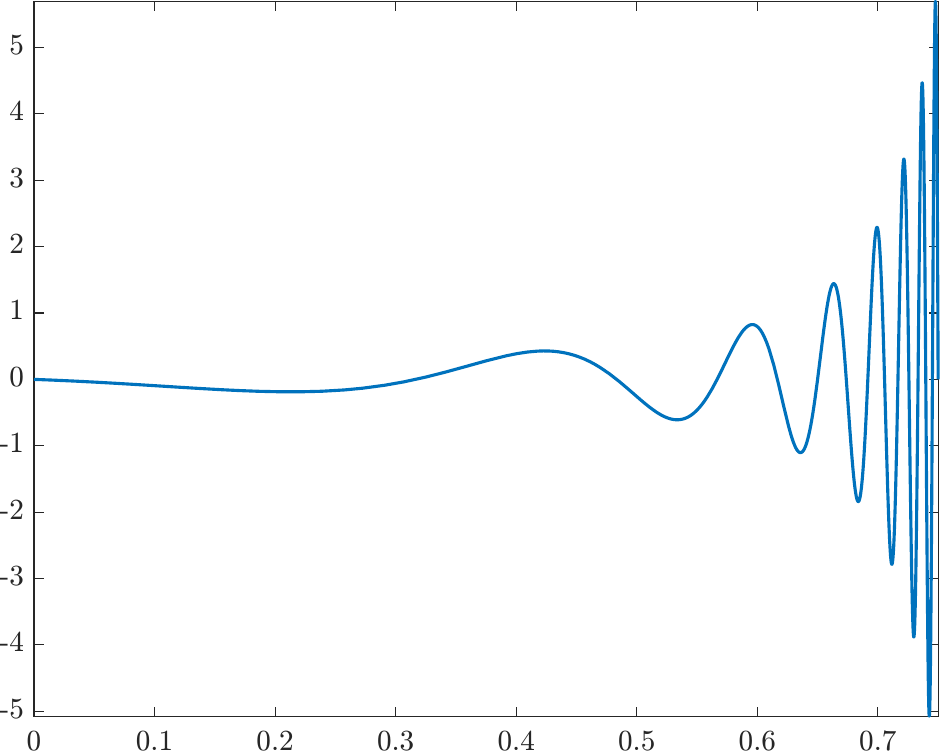}
    \caption{Function $q(x)$ defined in \cref{eq: function_q(x)}}
    \label{fig: 1D_Laplace_trimming_dynamics_eps_1e-6_function}
\end{figure}

\begin{figure}[H]
    \centering
    \includegraphics[width=\linewidth]{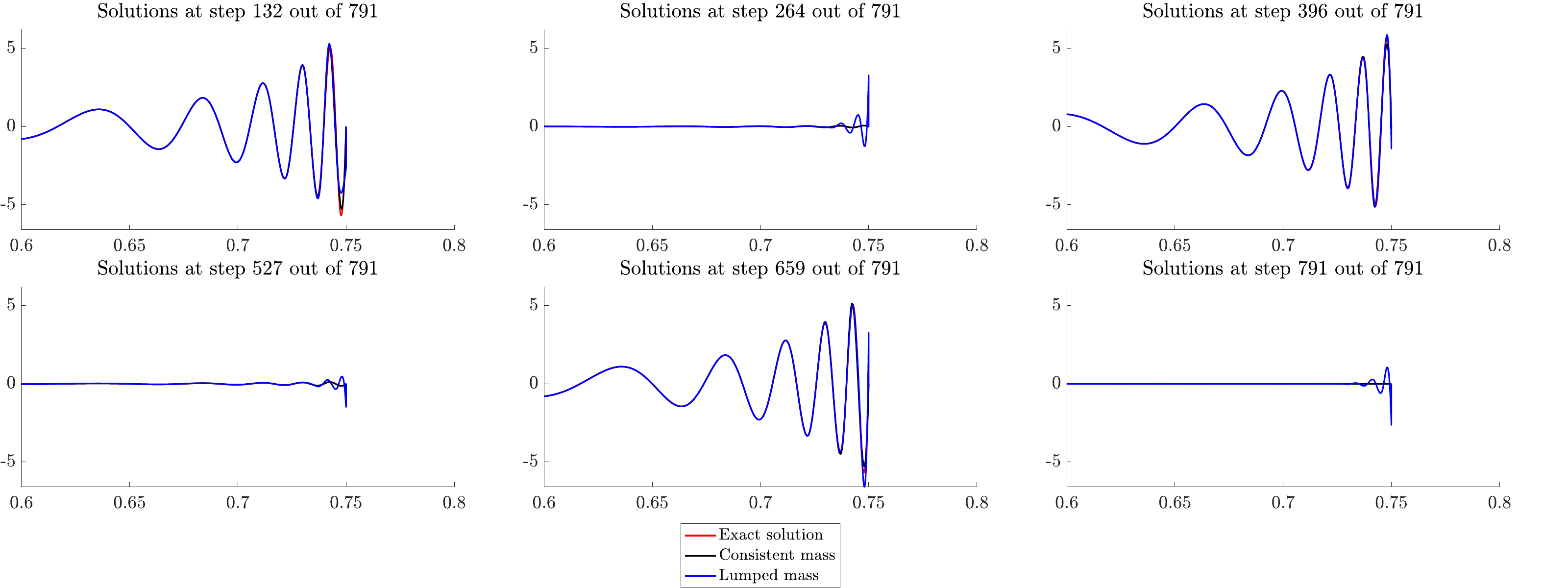}
    \caption{Exact solution and discrete solutions for the consistent and lumped mass for cubic $C^2$ B-splines}
    \label{fig: 1D_Laplace_trimming_dynamics_solution_snapshots_p3_eps_1e-6_Cp_1_explicit}
\end{figure}

\begin{figure}[H]
    \centering
    \includegraphics[width=\linewidth]{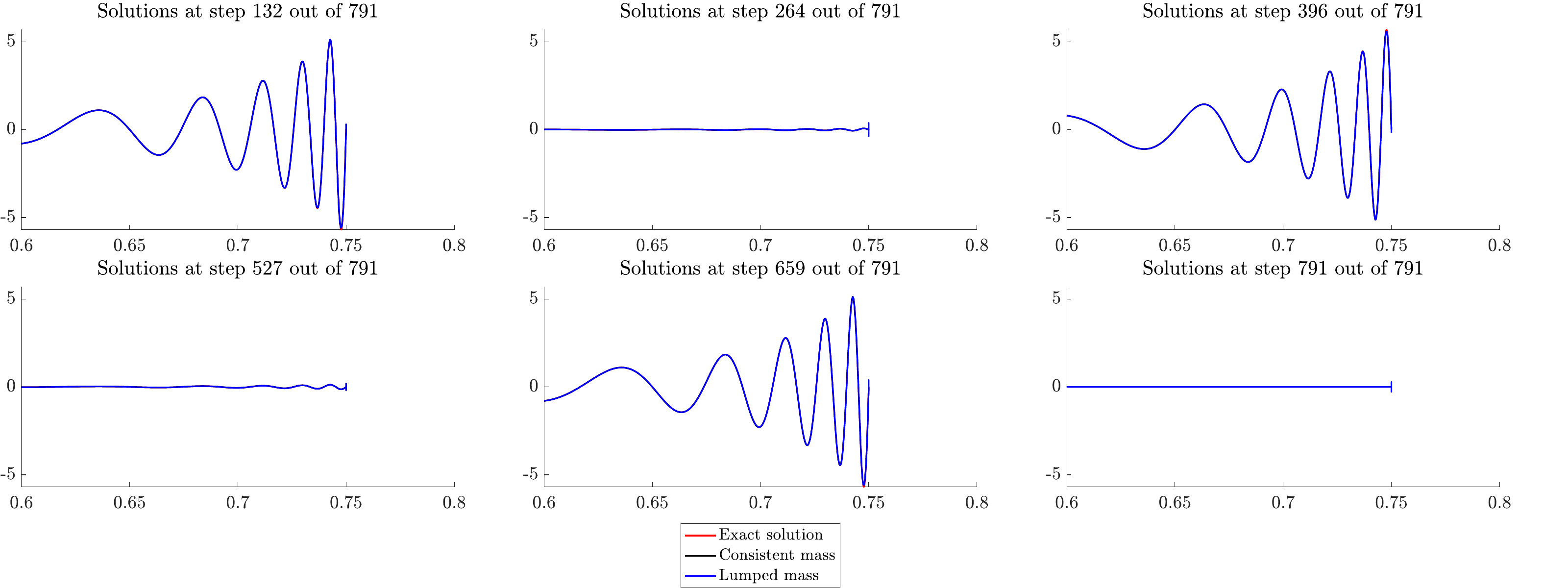}
    \caption{Exact solution and discrete solutions for the consistent and lumped mass for cubic $C^0$ B-splines}
    \label{fig: 1D_Laplace_trimming_dynamics_solution_snapshots_p3_eps_1e-6_C0_explicit}
\end{figure}

\begin{figure}[H]
     \centering
     \begin{subfigure}[t]{0.48\textwidth}
    \centering
    \includegraphics[width=\textwidth]{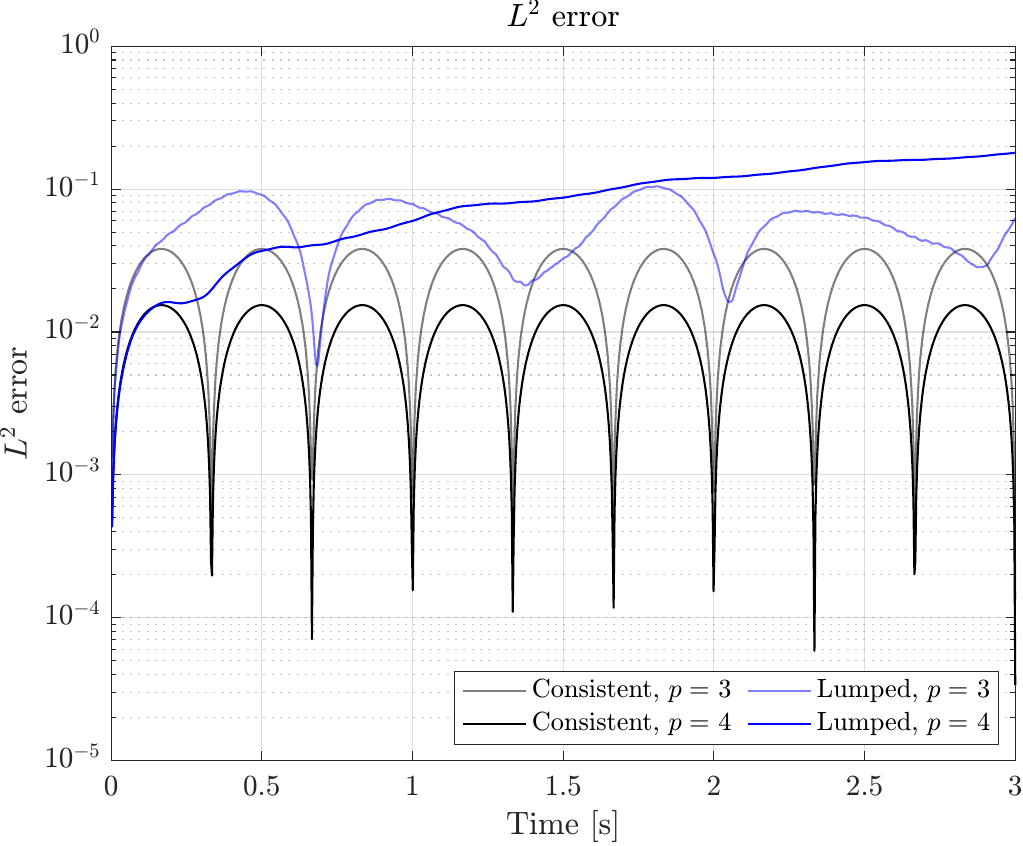}
    \caption{$L^2$ error over the time for $C^{p-1}$ discretizations of degree $p=3,4$}
    \label{fig: 1D_Laplace_trimming_dynamics_L2_error_eps_1e-6_explicit}
     \end{subfigure}
     \hfill
     \begin{subfigure}[t]{0.48\textwidth}
    \centering
    \includegraphics[width=\textwidth]{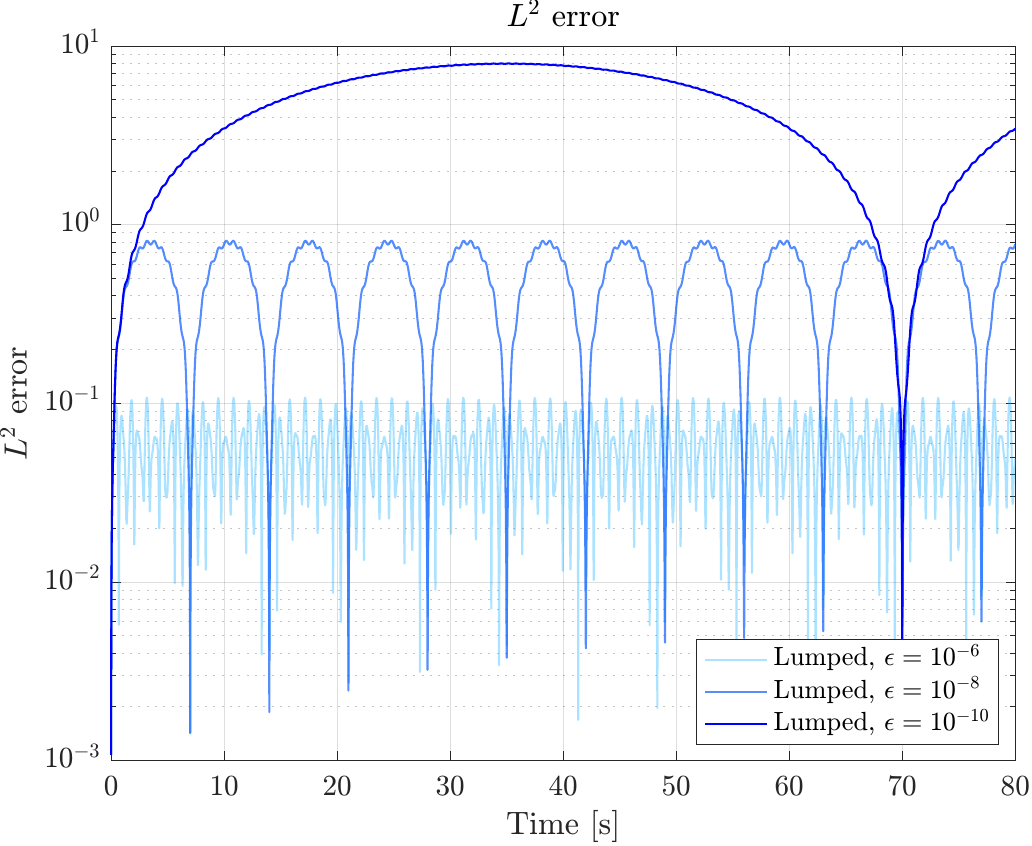}
    \caption{$L^2$ error over the time interval $[0,80]$ for a cubic $C^2$ discretization and decreasing values of $\epsilon$}
    \label{fig: 1D_Laplace_trimming_dynamics_L2_error_LM_p3_explicit}
     \end{subfigure}
     \hfill
    \caption{Evolution of the $L^2$ error over time for the consistent and lumped mass solutions}
    \label{fig: 1D_Laplace_trimming_dynamics_L2_error}
\end{figure}
\end{example}

\begin{remark}
Our experiments always employ a mesh-independent trimming parameter $\epsilon$ for defining the physical domain. This ensures it remains independent of the discretization. However, the quality of the results depends on how small $\eta=\epsilon/h$ is, where $h$ is the mesh size. Refining the mesh while keeping $\epsilon$ constant (i.e. increasing $\eta$) improves the results for the lumped mass. Moreover, our experiments convey another message: even though the solution for the consistent mass is satisfactory for a given mesh size, it might become unusable when the mass matrix is lumped.
\end{remark}

\begin{example}[Rotated square]
\label{ex: counter_example_rotated_square}
Similar problems may arise on more complicated geometries. For demonstrating it, we solve the wave equation on a shifted and rotated square, oftentimes used as a benchmark for stability and conditioning analyses \cite{de2017condition,leidinger2020explicit}. A square of side length $2s=2(0.25+\epsilon)$, centered at $(0,0)$, is translated by $\bm{\tau}=(0.5,0.5)$ and rotated $0.85$ radians to rest within the unit square. A tensor product mesh is then laid in the ambient fictitious square, as shown in \Cref{fig: 2D_Laplace_trimmed_rotated_square_domain_n32} for $N=32$ subdivisions in each direction. We consider the manufactured solution on the physical square's initial configuration (before translation and rotation) $\hat{u}(\hat{\mathbf{x}},t)=\hat{w}(\hat{\mathbf{x}})\sin(n \pi t)$, where
\begin{equation}
\label{eq: function_w(x)}
    \hat{w}(\hat{\mathbf{x}}) = \left(q(\hat{x})+q(-\hat{x})\right)\left(q(\hat{y})+q(-\hat{y})\right) \quad \text{on} \quad \hat{\Omega}=[-s, s] \times [-s, s]
\end{equation}
where $q(x)$ is defined in \cref{eq: function_q(x)} and we set its parameters to $x_r=s=0.25+\epsilon$, $x_l=\frac{1}{w}+x_r$, $C=8$, $a=8$, $w=10$, $n=3$ and $\epsilon=10^{-6}$. The function $\hat{w}(\hat{\mathbf{x}})$ is shown in \Cref{fig: 2D_Laplace_trimmed_rotated_square_function_2D}. The manufactured solution on the final shifted and rotated configuration is defined by $u(\mathbf{x},t)=\hat{u}(F^{-1}(\mathbf{x}),t)$, where $F \colon \hat{\Omega} \to \Omega$ is the map $F(\hat{\mathbf{x}})=R\hat{\mathbf{x}}+\bm{\tau}$, with $R$ the rotation matrix (for a rotation angle of 0.85 radians) and $\tau$ the translation vector. 

A cubic $C^2$ spline basis is constructed on a fine mesh with 128 subdivisions in each direction. Once again, the numerical solution for the lumped mass is computed with the central difference method using the step size given by the CFL condition \eqref{eq: CFL_central_difference} (and multiplied by a safeguarding factor of $0.85$) while the solution for the consistent mass is computed with an implicit unconditionally stable Newmark method with the same step size as the lumped mass. Snapshots of the solutions at the top corner of the domain shortly after the beginning of the simulation are shown in \Cref{fig: 2D_Laplace_trimmed_rotated_square_dynamics_solution_snapshots_p3_rot_0_85_s20} and almost halfway through the simulation in \Cref{fig: 2D_Laplace_trimmed_rotated_square_dynamics_solution_snapshots_p3_rot_0_85_s135}. Spurious oscillations for the lumped mass are barely visible in \Cref{fig: 2D_Laplace_trimmed_rotated_square_dynamics_solution_snapshots_p3_rot_0_85_s20} but dramatically amplify in \Cref{fig: 2D_Laplace_trimmed_rotated_square_dynamics_solution_snapshots_p3_rot_0_85_s135}. The growth of the $L^2$ error over time in \Cref{fig: 2D_Laplace_trimmed_rotated_square_L2_error_explicit_p3_rot_0_85} further highlights the oscillations' amplification. In order to highlight how local the phenomenon is, the extreme values $c_{\min}$ and $c_{\max}$ of the colormap in \Cref{fig: 2D_Laplace_trimmed_rotated_square_dynamics_solution_snapshots_p3_rot_0_85} are calibrated based on the exact and consistent mass solutions only. Thus, any value smaller (resp. larger) than $c_{\min}$ (resp. $c_{\max}$) for the lumped mass is mapped to $c_{\min}$ (resp. $c_{\max}$). This allows to qualitatively distinguish accurate values from inaccurate ones and avoids being misled by stretching the colormap's range of values. This coloring strategy is consistently applied in all subsequent examples.

\begin{figure}[H]
     \centering
     \begin{subfigure}[t]{0.40\textwidth}
    \centering
    \includegraphics[width=\textwidth]{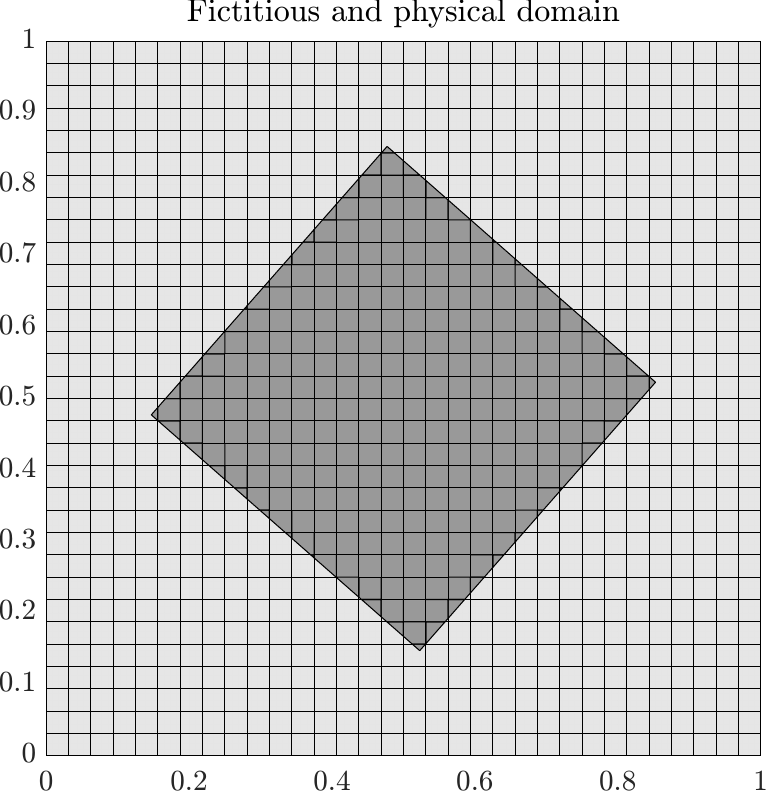}
    \caption{Rotated square embedded in a larger square}
    \label{fig: 2D_Laplace_trimmed_rotated_square_domain_n32}
     \end{subfigure}
     \hfill
     \begin{subfigure}[t]{0.58\textwidth}
    \centering
    \includegraphics[width=\textwidth]{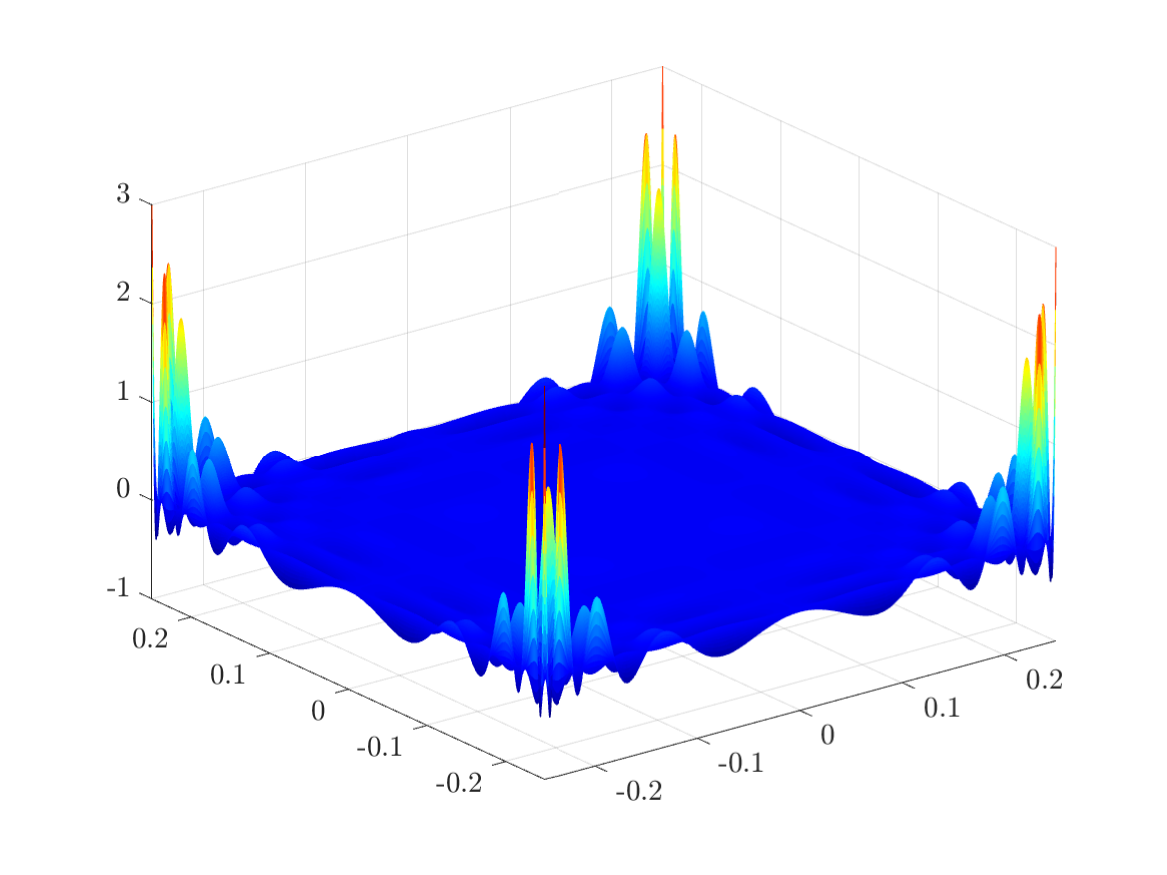}
    \caption{Function $\hat{w}(\hat{\mathbf{x}})$ defined in \cref{eq: function_w(x)}}
    \label{fig: 2D_Laplace_trimmed_rotated_square_function_2D}
     \end{subfigure}
     \hfill
    \caption{Domain and spatial part of the solution}
    \label{fig: 2D_Laplace_trimmed_rotated_square_domain_function}
\end{figure}

\begin{figure}[H]
     \centering
     \begin{subfigure}[t]{1.0\textwidth}
    \centering
    \includegraphics[width=\textwidth]{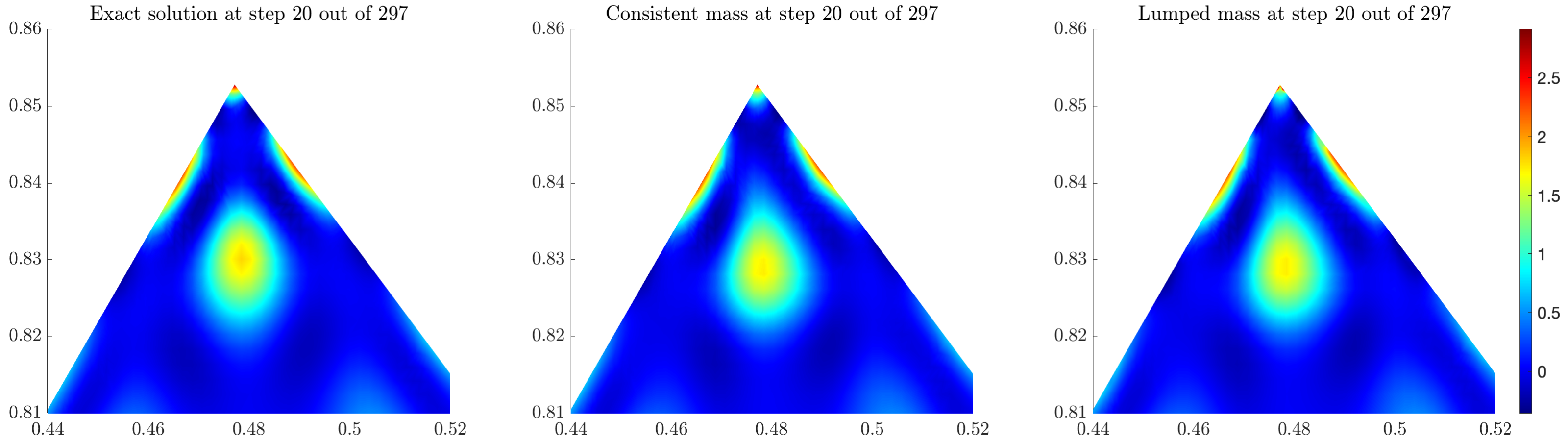}
    \caption{Solution snapshots at time $t=0.1926$}
    \label{fig: 2D_Laplace_trimmed_rotated_square_dynamics_solution_snapshots_p3_rot_0_85_s20}
     \end{subfigure}
     \hfill
     \begin{subfigure}[t]{1.0\textwidth}
    \centering
    \includegraphics[width=\textwidth]{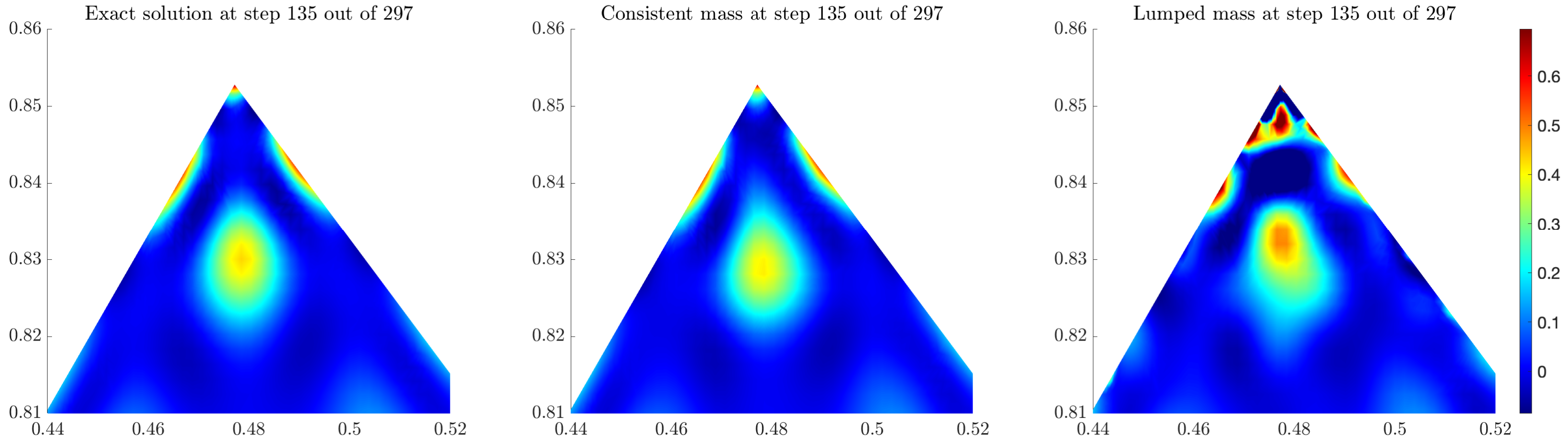}
    \caption{Solution snapshots at time $t=1.3581$}
    \label{fig: 2D_Laplace_trimmed_rotated_square_dynamics_solution_snapshots_p3_rot_0_85_s135}
     \end{subfigure}
     \hfill
    \caption{Exact solution and discrete solutions for the consistent and lumped mass for $p=3$ (zoomed in on the top corner)}
    \label{fig: 2D_Laplace_trimmed_rotated_square_dynamics_solution_snapshots_p3_rot_0_85}
\end{figure}

\begin{figure}[H]
    \centering
    \includegraphics[width=0.5\linewidth]{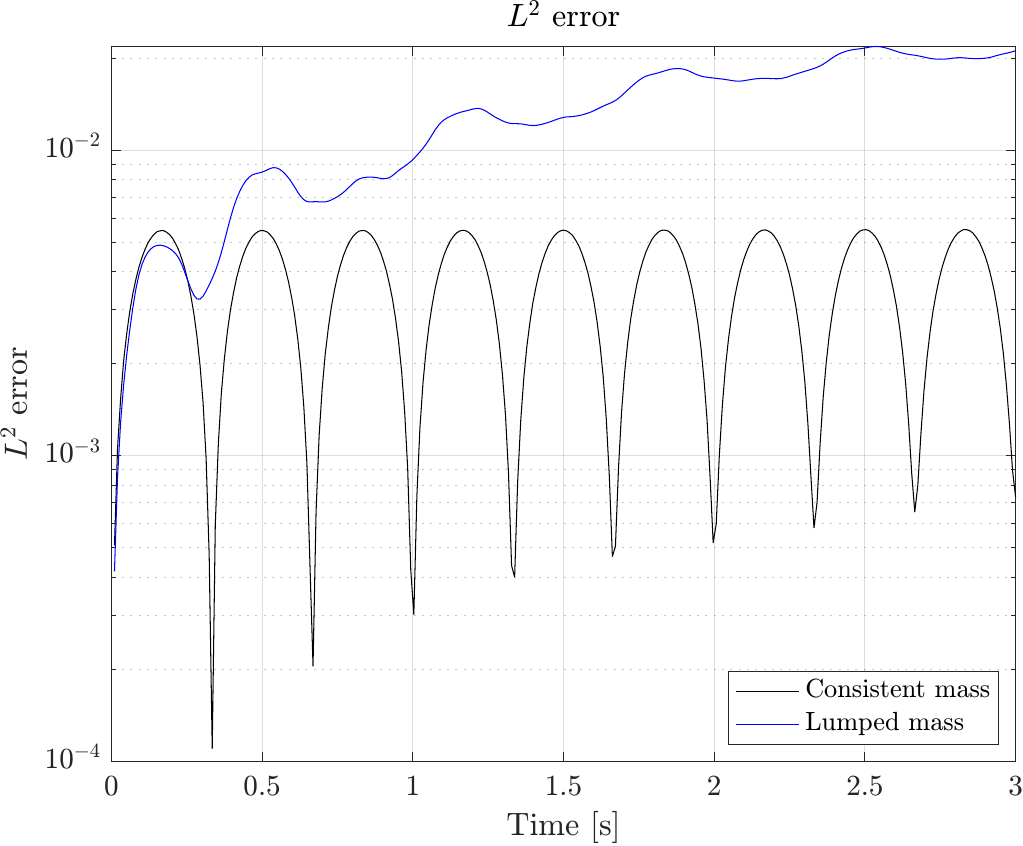}
    \caption{Evolution of the $L^2$ error over time for the consistent and lumped mass solutions}
    \label{fig: 2D_Laplace_trimmed_rotated_square_L2_error_explicit_p3_rot_0_85}
\end{figure}

\end{example}

\begin{example}[Plate with extrusion]
\label{ex: counter_example_plate_with_extrusion}
As a third example, we solve the wave equation on an extruded plate. Two circular arcs of radius $r=0.125-\epsilon$ and center $\mathbf{c}_1=(0.5,0.25)$ and $\mathbf{c}_2=(0.5,0.75)$ are connected vertically to extrude the unit square $\widehat{\Omega}=(0,1)^2$, as shown in \Cref{fig: 2D_Laplace_trimmed_plate_with_extrusion_domain_n32} for $N=32$ subdivisions in each direction. We consider the manufactured solution $u(\mathbf{x},t)=w(\mathbf{x})\sin(n \pi t)$ with 
\begin{equation}
\label{eq: function_w(x)_plate}
    w(\mathbf{x}) = x(x-1)\mathrm{e}^{-\left(\frac{|x-0.5|-r}{\sigma}\right)^2}\sin(m|x-0.5|)
\end{equation}
together with parameters $n=3$, $m=100$, $\sigma=0.05$ and $\epsilon=10^{-7}$. The function $w(\mathbf{x})$, which actually only depends on $x$, is shown in \Cref{fig: 2D_Laplace_trimmed_plate_with_extrusion_function}. As usual, the right-hand side, boundary and initial conditions are prescribed such that $u(\mathbf{x},t)$ is the exact solution of the PDE, which is discretized with quadratic $C^1$ B-splines and $N=48$ subdivisions in each direction. Similarly to the previous examples, the fully discrete solution is computed over the time interval $[0, 3]$ with the central difference method and the step size of the CFL condition (multiplied by $0.85$). Snapshots of the solution are shown in \Cref{fig: 2D_Laplace_trimmed_plate_with_extrusion_dynamics_solution_snapshots_p2_eps_1e-7_s20} for a small time and in \Cref{fig: 2D_Laplace_trimmed_plate_with_extrusion_dynamics_solution_snapshots_p2_eps_1e-7_s135} for a larger time. Once again, while the solution for the consistent mass remains accurate throughout the simulation, the one for the lumped mass rapidly deteriorates and apparently triggers high frequencies, as shown in \Cref{fig: 2D_Laplace_trimmed_plate_with_extrusion_L2_error_explicit_p2_eps_1e-7}.

\begin{figure}[H]
     \centering
     \begin{subfigure}[t]{0.44\textwidth}
    \centering
    \includegraphics[width=\textwidth]{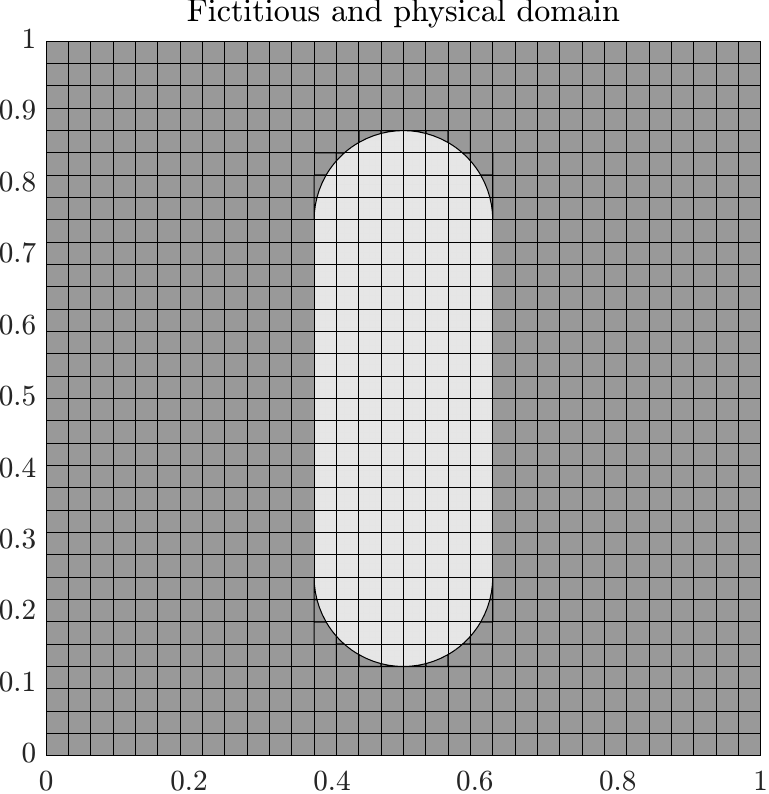}
    \caption{Square plate with an extrusion}
    \label{fig: 2D_Laplace_trimmed_plate_with_extrusion_domain_n32}
     \end{subfigure}
     \hfill
     \begin{subfigure}[t]{0.54\textwidth}
    \centering
    \includegraphics[width=\textwidth]{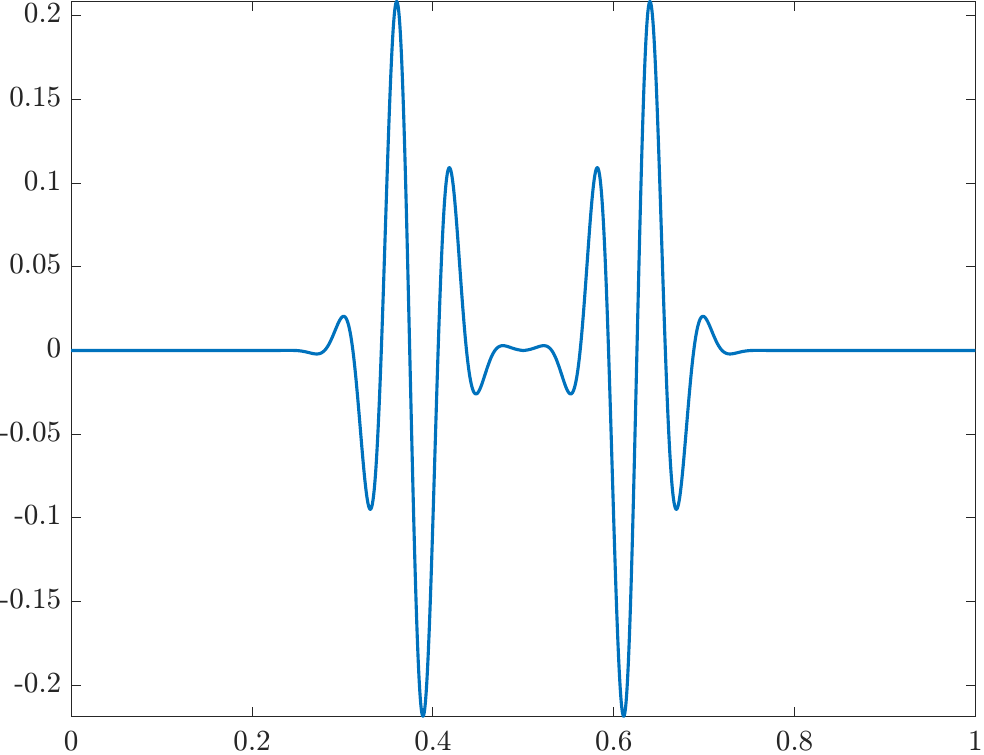}
    \caption{Function $w(\mathbf{x})$ defined in \cref{eq: function_w(x)_plate}}
    \label{fig: 2D_Laplace_trimmed_plate_with_extrusion_function}
     \end{subfigure}
     \hfill
    \caption{Domain and spatial part of the solution}
    \label{fig: 2D_Laplace_plate_with_extrusion_domain_function}
\end{figure}

\begin{figure}[H]
     \centering
     \begin{subfigure}[t]{1.0\textwidth}
    \centering
    \includegraphics[width=\textwidth]{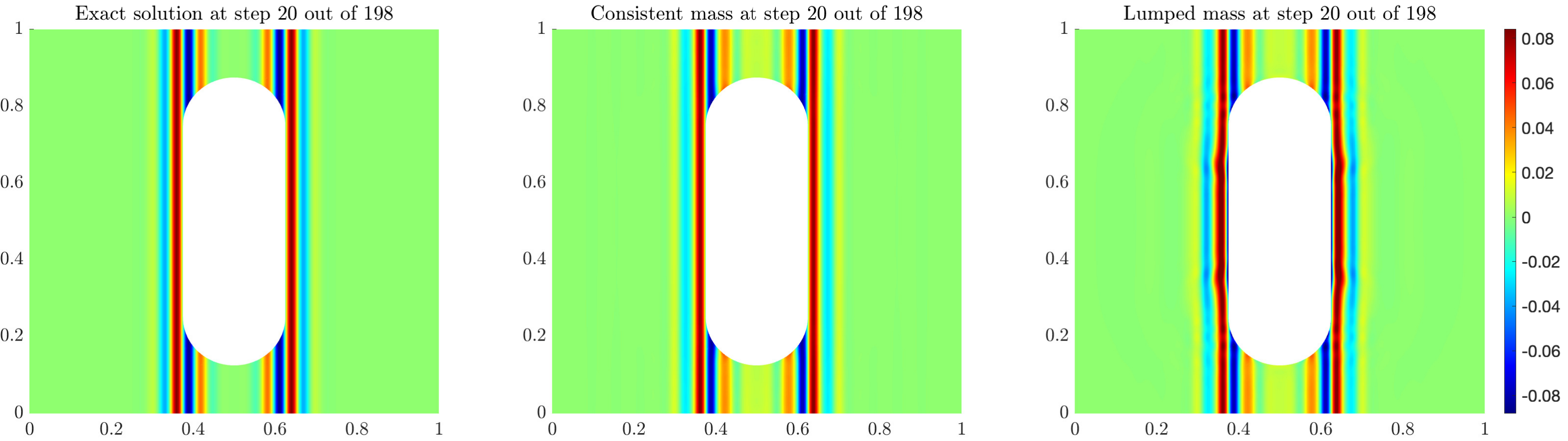}
    \caption{Solution snapshots at time $t=0.2893$}
    \label{fig: 2D_Laplace_trimmed_plate_with_extrusion_dynamics_solution_snapshots_p2_eps_1e-7_s20}
     \end{subfigure}
     \hfill
     \begin{subfigure}[t]{1.0\textwidth}
    \centering
    \includegraphics[width=\textwidth]{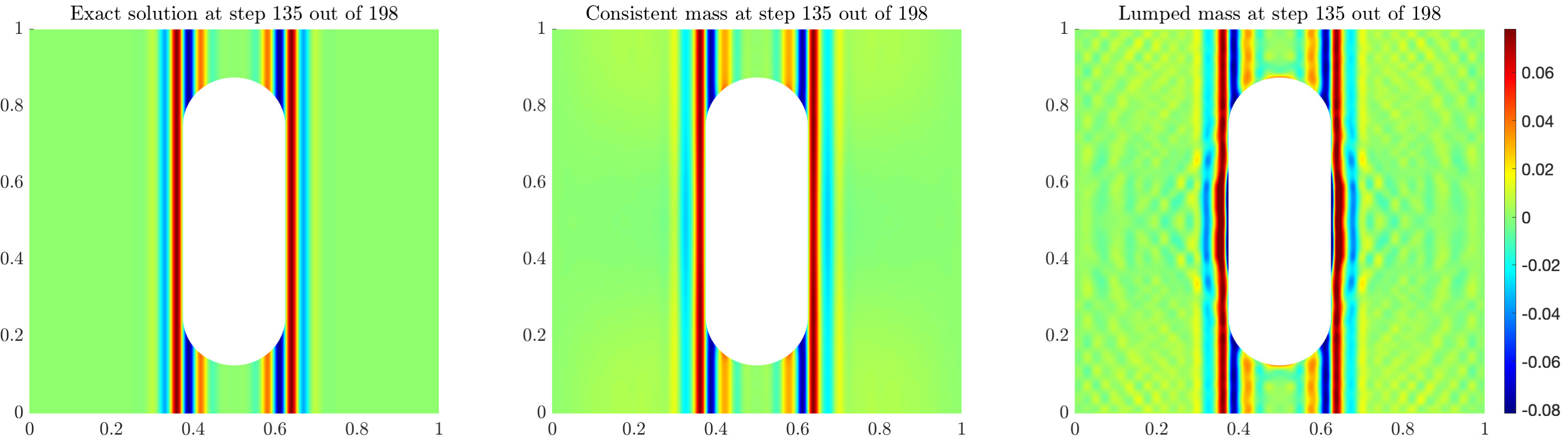}
    \caption{Solution snapshots at time $t=2.0406$}
    \label{fig: 2D_Laplace_trimmed_plate_with_extrusion_dynamics_solution_snapshots_p2_eps_1e-7_s135}
     \end{subfigure}
     \hfill
    \caption{Exact solution and discrete solutions for the consistent and lumped mass for quadratic $C^1$ B-splines}
    \label{fig: 2D_Laplace_trimmed_plate_with_extrusion_dynamics_solution_snapshots_p2_eps_1e-7}
\end{figure}

\begin{figure}[H]
    \centering
    \includegraphics[width=0.5\linewidth]{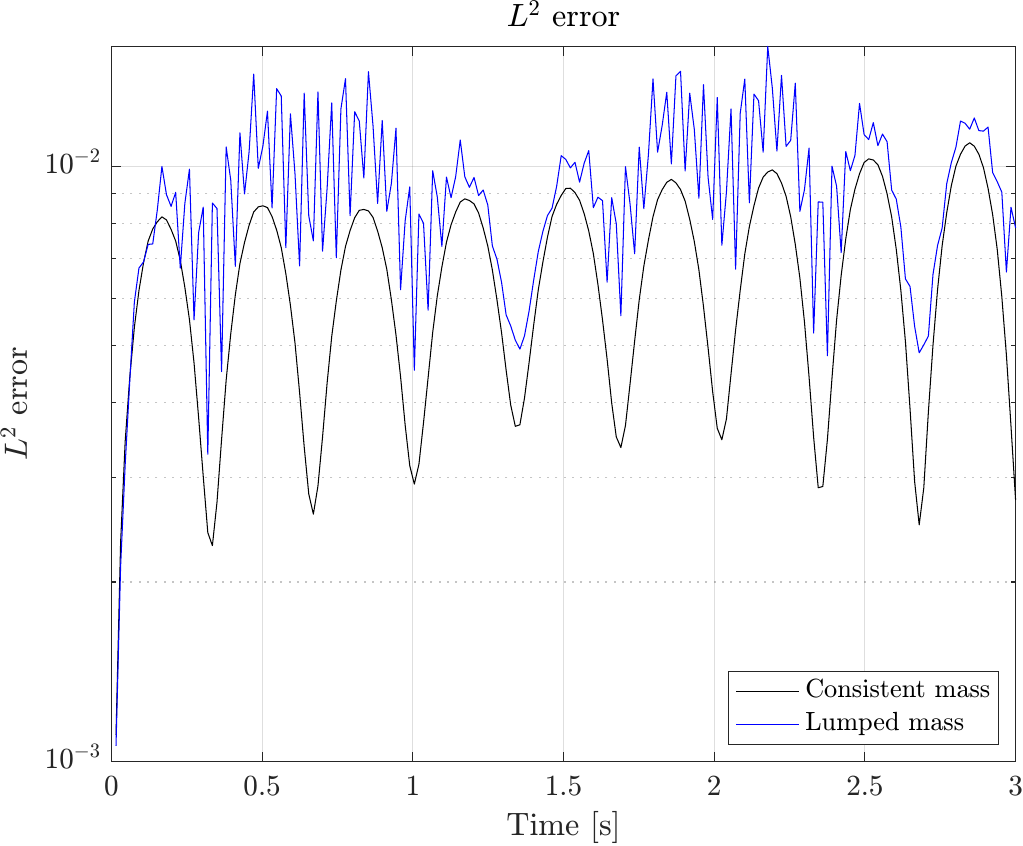}
    \caption{Evolution of the $L^2$ error over time for the consistent and lumped mass solutions}
    \label{fig: 2D_Laplace_trimmed_plate_with_extrusion_L2_error_explicit_p2_eps_1e-7}
\end{figure}
\end{example}

\begin{example}[Perforated plate]
\label{ex: counter_example_plate_with_hole}
As final example, we solve the wave equation on a perforated plate. This geometry is a small variant of the previous one and consists in extruding from the unit square a circle of radius $r=0.125 \sqrt{2}+\epsilon$ centered at $\mathbf{c}=(0.5, 0.5)$. The setup is exemplarily depicted in \Cref{fig: 2D_Laplace_trimmed_plate_with_hole_domain_n32} for a coarse background mesh with $N=32$ subdivisions. Our manufactured solution in this case is $u(\mathbf{x},t)=w(\mathbf{x})\sin(n \pi t)$ with 
\begin{equation}
\label{eq: function_w(x)_plate_with_hole}
    w(\mathbf{x}) = x(x-1)\mathrm{e}^{-\left(\frac{\|\mathbf{x}-\mathbf{c}\|}{\sigma}\right)^2}\sin(g(\mathbf{x}))
\end{equation}
and
\begin{equation*}
    g(\mathbf{x}) = k \mathrm{e}^{-\left(\frac{\|\mathbf{x}-\mathbf{c}\|-0.9r}{\eta}\right)^2}
\end{equation*}
with $n=3$, $\sigma=0.5$, $k=10$, $\eta^2=0.005$ and $\epsilon=10^{-6}$. The function $w(\mathbf{x})$ is shown in \Cref{fig: 2D_Laplace_trimmed_plate_with_hole_function} and satisfies homogeneous Dirichlet boundary conditions on the left and right boundaries. Neumann boundary conditions are prescribed on the remaining boundaries and the solution is approximated with cubic $C^2$ B-splines on a background mesh with $N=56$ subdivisions in each direction. The time interval and the temporal discretization parameters are taken from the previous example. Detailed solution snapshots are shown in \Cref{fig: 2D_Laplace_trimmed_plate_with_hole_dynamics_solution_snapshots_p3_eps_1e-6} at two different times. Some artifacts in the solution for the lumped mass are already visible on two small trimmed elements shortly after the beginning of the simulation and tend to grow over time, as testified in \Cref{fig: 2D_Laplace_trimmed_plate_with_hole_L2_error_explicit_p3_eps_1e-6}.

\begin{figure}[H]
     \centering
     \begin{subfigure}[t]{0.42\textwidth}
    \centering
    \includegraphics[width=\textwidth]{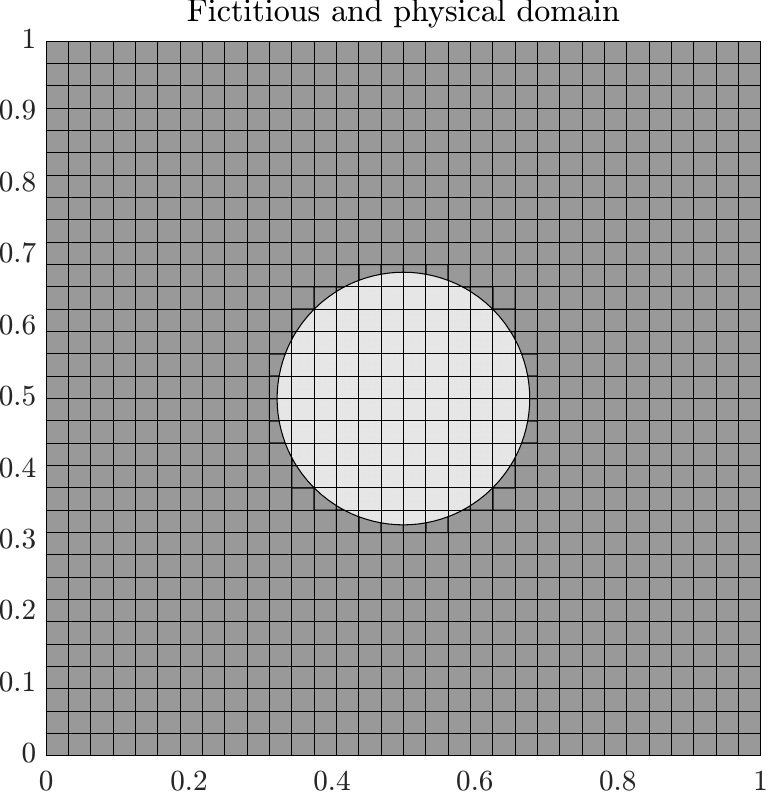}
    \caption{Perforated plate}
    \label{fig: 2D_Laplace_trimmed_plate_with_hole_domain_n32}
     \end{subfigure}
     \hfill
     \begin{subfigure}[t]{0.56\textwidth}
    \centering
    \includegraphics[width=\textwidth]{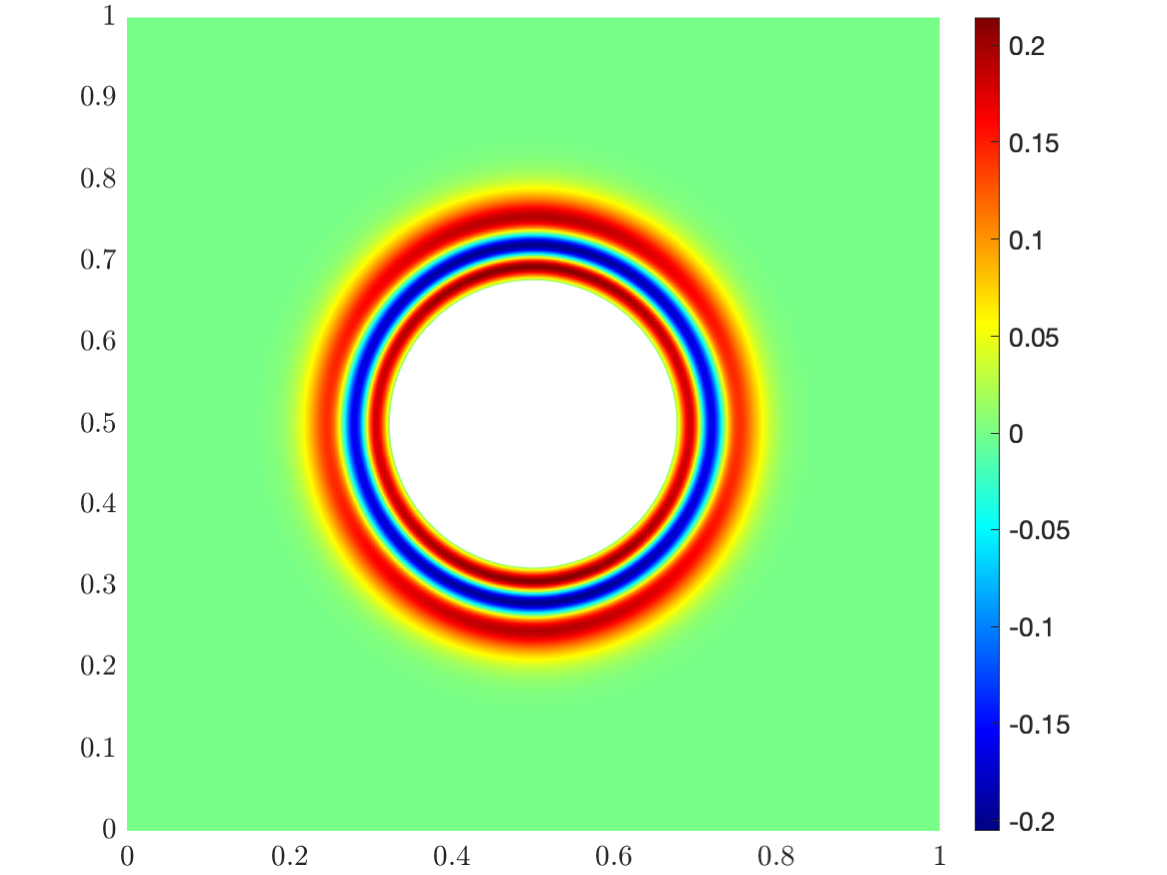}
    \caption{Function $w(\mathbf{x})$ defined in \cref{eq: function_w(x)_plate_with_hole}}
    \label{fig: 2D_Laplace_trimmed_plate_with_hole_function}
     \end{subfigure}
     \hfill
    \caption{Domain and spatial part of the solution}
    \label{fig: 2D_Laplace_perforated_plate_domain_function}
\end{figure}

\begin{figure}[H]
     \centering
     \begin{subfigure}[t]{1.0\textwidth}
    \centering
    \includegraphics[width=\textwidth]{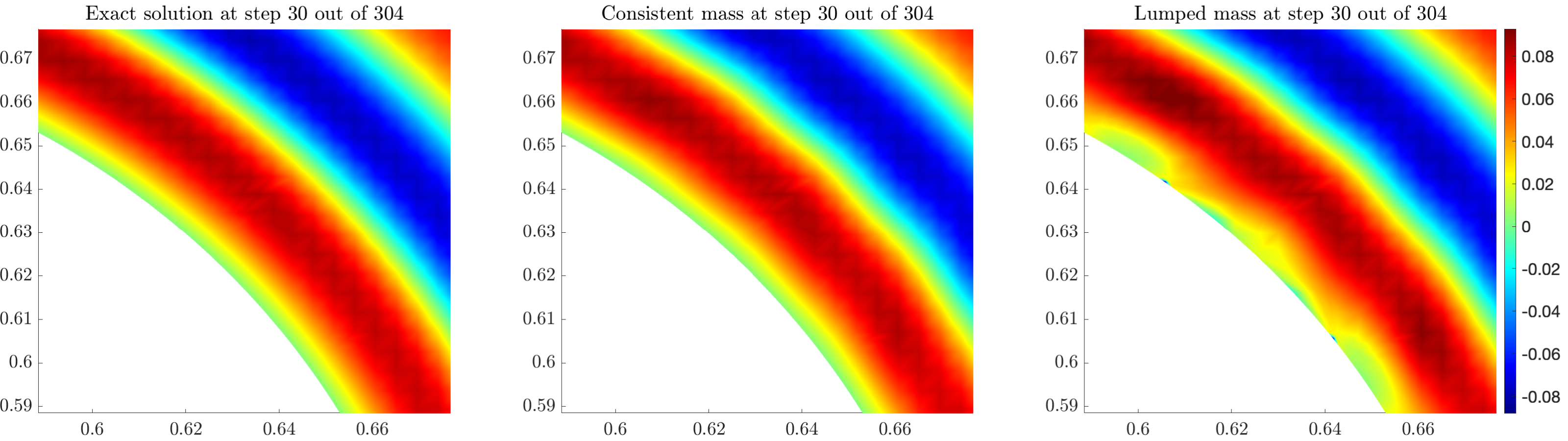}
    \caption{Solution snapshots at time $t=0.2871$}
    \label{fig: 2D_Laplace_trimmed_plate_with_hole_dynamics_solution_snapshots_p3_eps_1e-6_s30}
     \end{subfigure}
     \hfill
     \begin{subfigure}[t]{1.0\textwidth}
    \centering
    \includegraphics[width=\textwidth]{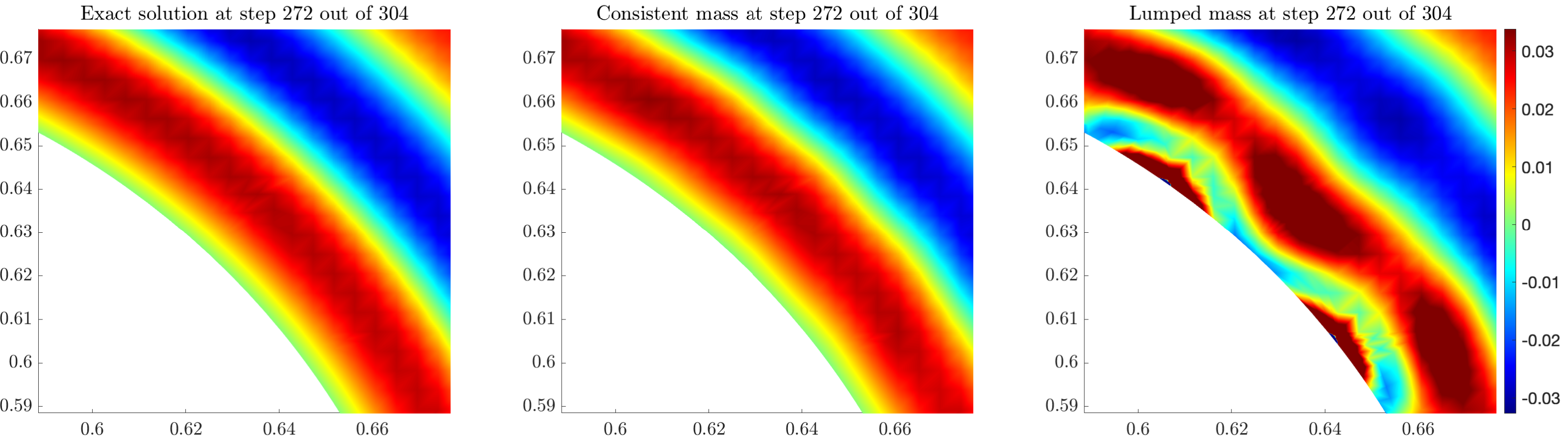}
    \caption{Solution snapshots at time $t=2.6832$}
    \label{fig: 2D_Laplace_trimmed_plate_with_hole_dynamics_solution_snapshots_p3_eps_1e-6_s272}
     \end{subfigure}
     \hfill
    \caption{Exact solution and discrete solutions for the consistent and lumped mass for cubic $C^2$ B-splines}
    \label{fig: 2D_Laplace_trimmed_plate_with_hole_dynamics_solution_snapshots_p3_eps_1e-6}
\end{figure}

\begin{figure}[H]
    \centering
    \includegraphics[width=0.5\linewidth]{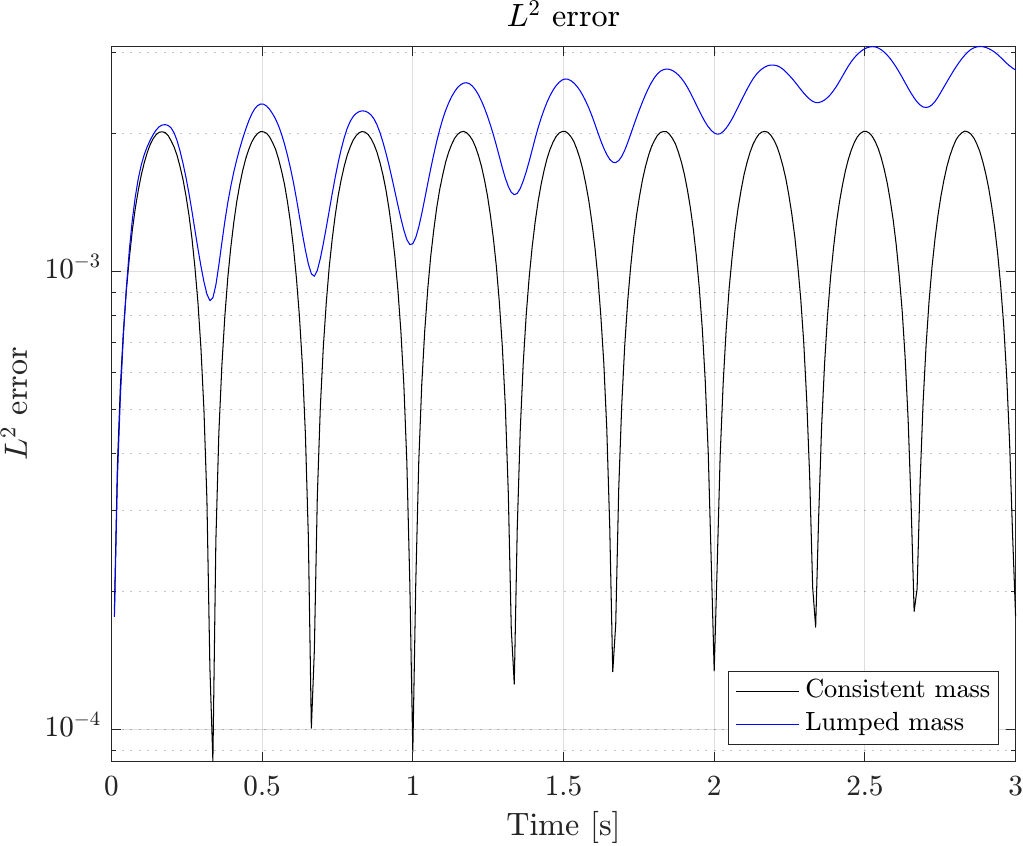}
    \caption{Evolution of the $L^2$ error over time for the consistent and lumped mass solutions}
    \label{fig: 2D_Laplace_trimmed_plate_with_hole_L2_error_explicit_p3_eps_1e-6}
\end{figure}
\end{example}

\section{Analysis}
\label{se: analysis}
This section will shed some light on the oscillations observed in the previous examples. Firstly, we must stress that those oscillations are not an artifact of time integration, but of mass lumping alone. For demonstrating it, we compare the fully discrete solution to the exact solution of the semi-discrete problem; i.e. the coupled second order system of ODEs
\begin{equation}
\label{eq: system_odes}
\begin{cases}
    M\ddot{\mathbf{u}}(t) + K\mathbf{u}(t) = \mathbf{f}(t) \qquad \text{for } t \in [0,T],\\
    \mathbf{u}(0)=\mathbf{u}_0, \\
    \dot{\mathbf{u}}(0) = \mathbf{v}_0,
\end{cases}
\end{equation}
where $T>0$ denotes the final time. The exact solution of \cref{eq: system_odes}, expressed in terms of matrix functions, is structurally similar to its scalar counterpart considered in the next lemma.

\begin{lemma}
\label{lem: scalar_ode}
The general solution of the ODE
\begin{equation}
\label{eq: scalar_ode}
\begin{cases}
    \ddot{u}(t) + \lambda u(t) = f(t) \qquad \text{for } t \in [0,T],\\
    u(0) = u_0, \\
    \dot{u}(0) = v_0,
\end{cases}
\end{equation}
with $f \in C^0([0,T])$, $\lambda \in \mathbb{R}^*_+$ and $u_0,v_0 \in \mathbb{R}$ is given by
\begin{equation*}
    u(t) = u_0 \cos(\sqrt{\lambda} t) + t v_0 \sinc(\sqrt{\lambda} t) + \int_0^t (t-\tau)\sinc(\sqrt{\lambda}(t-\tau))f(\tau) \, d \tau.
\end{equation*}
\end{lemma}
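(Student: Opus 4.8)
The plan is to exploit linearity together with the constant-coefficient structure of \cref{eq: scalar_ode}, writing the solution as $u = u_h + u_p$, where $u_h$ solves the homogeneous problem carrying the prescribed initial data and $u_p$ is the forced response with vanishing initial data. Since $\lambda > 0$, the characteristic equation $r^2 + \lambda = 0$ has purely imaginary roots $\pm i\sqrt{\lambda}$, so $\{\cos(\sqrt{\lambda}\,t),\ \sin(\sqrt{\lambda}\,t)\}$ is a fundamental system for the homogeneous equation and the superposition $u = u_h + u_p$ captures the general solution.

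First I would fix the homogeneous part by setting $u_h(t) = c_1 \cos(\sqrt{\lambda}\,t) + c_2 \sin(\sqrt{\lambda}\,t)$ and imposing the initial conditions: evaluating at $t=0$ gives $c_1 = u_0$, and differentiating once before evaluating at $t=0$ gives $c_2 \sqrt{\lambda} = v_0$, whence $c_2 = v_0/\sqrt{\lambda}$. Using the identity $\sin(\sqrt{\lambda}\,t)/\sqrt{\lambda} = t\,\sinc(\sqrt{\lambda}\,t)$, the homogeneous contribution becomes exactly the first two terms $u_0 \cos(\sqrt{\lambda}\,t) + t v_0 \sinc(\sqrt{\lambda}\,t)$ of the claimed formula.

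Next I would construct the particular solution by variation of parameters, equivalently by Duhamel's principle. The Wronskian of the fundamental system is the constant $\sqrt{\lambda}$, which yields the causal Green's function $G(t,\tau) = \sin(\sqrt{\lambda}(t-\tau))/\sqrt{\lambda}$ for $t \geq \tau$, and therefore $u_p(t) = \int_0^t \frac{\sin(\sqrt{\lambda}(t-\tau))}{\sqrt{\lambda}}\, f(\tau)\,d\tau$. Rewriting the kernel as $(t-\tau)\sinc(\sqrt{\lambda}(t-\tau))$ reproduces verbatim the integral term in the statement.

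The one step deserving care---which I would treat as the crux, even though it turns out to be clean---is verifying rigorously that $u_p$ satisfies both the ODE and the zero initial conditions, which amounts to differentiating under the integral sign via the Leibniz rule. The algebra is benign precisely because the kernel vanishes on the diagonal: at $\tau = t$ one has $\sin(\sqrt{\lambda}(t-\tau))/\sqrt{\lambda} = 0$, so the boundary contribution in the first derivative drops and $\dot{u}_p(t) = \int_0^t \cos(\sqrt{\lambda}(t-\tau))\, f(\tau)\,d\tau$; differentiating once more now produces the boundary term $\cos(0)f(t) = f(t)$ together with $-\lambda u_p(t)$, giving $\ddot{u}_p + \lambda u_p = f$, while $u_p(0) = \dot{u}_p(0) = 0$ is immediate. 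Continuity of $f$ on $[0,T]$ is exactly the hypothesis needed to justify these manipulations and to guarantee $u \in C^2([0,T])$. Adding $u_h$ and $u_p$ then completes the argument.
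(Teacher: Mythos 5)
Your proof is correct: the decomposition into a homogeneous part fixed by the initial data and a Duhamel/variation-of-parameters particular solution, with the Leibniz-rule verification, is exactly the "elementary calculus" argument that the paper invokes in its one-line proof. You have simply supplied the details the paper omits, so there is nothing to reconcile.
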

\begin{proof}
The result follows from elementary calculus.
\end{proof}

The solution of \cref{eq: system_odes} is now readily expressed in terms of matrix functions.

\begin{corollary}
\label{cor: semi_discrete_solution}
The exact solution of \cref{eq: system_odes} for $\mathbf{f} \in C^0([0,T])$ and $M,K$ symmetric positive definite is given by
\begin{equation}
\label{eq: semi_discrete_solution}
    \mathbf{u}(t)=\cos(\sqrt{A}t)\mathbf{u}_0+t\sinc(\sqrt{A}t)\mathbf{v}_0+\int_0^t (t-\tau)\sinc(\sqrt{A}(t-\tau))M^{-1}\mathbf{f}(\tau) \, d \tau
\end{equation}
where $A=M^{-1}K$.
\end{corollary}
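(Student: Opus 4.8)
The plan is to reduce the coupled system to $n$ decoupled scalar ODEs of the form treated in \Cref{lem: scalar_ode} by simultaneously diagonalizing $M$ and $K$, solve each scalar equation, and then reassemble the solution in matrix-function form.

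First I would exploit that $M$ is symmetric positive definite to form its symmetric square root $M^{1/2}$ and consider the symmetric positive definite matrix $\widetilde{K}=M^{-1/2}KM^{-1/2}$. Its spectral decomposition $\widetilde{K}=Q\Lambda Q^\top$, with $Q$ orthogonal and $\Lambda=\diag(\lambda_1,\dots,\lambda_n)$, $\lambda_i>0$, yields a generalized eigenbasis $\Phi=M^{-1/2}Q$ satisfying $\Phi^\top M\Phi=I$ and $\Phi^\top K\Phi=\Lambda$. In particular $A=M^{-1}K=\Phi\Lambda\Phi^{-1}$ is diagonalizable with positive real eigenvalues, so $\cos(\sqrt{A}t)$ and $\sinc(\sqrt{A}t)$ are well defined (both being even, entire functions of their argument, they are in fact power series in $A$ alone and require no separate definition of $\sqrt{A}$); moreover $\Phi\Phi^\top=M^{-1}$, a relation I will need at the end.

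Next I would change variables via $\mathbf{u}(t)=\Phi\mathbf{y}(t)$. Substituting into \cref{eq: system_odes} and left-multiplying by $\Phi^\top$ decouples the system into $\ddot{\mathbf{y}}+\Lambda\mathbf{y}=\Phi^\top\mathbf{f}$, i.e.\ the $n$ independent scalar equations $\ddot{y}_i+\lambda_i y_i=(\Phi^\top\mathbf{f})_i$ with initial data $\mathbf{y}(0)=\Phi^{-1}\mathbf{u}_0$ and $\dot{\mathbf{y}}(0)=\Phi^{-1}\mathbf{v}_0$. Applying \Cref{lem: scalar_ode} componentwise and collecting the results gives $\mathbf{y}(t)=\cos(\sqrt{\Lambda}t)\mathbf{y}(0)+t\sinc(\sqrt{\Lambda}t)\dot{\mathbf{y}}(0)+\int_0^t(t-\tau)\sinc(\sqrt{\Lambda}(t-\tau))\,\Phi^\top\mathbf{f}(\tau)\,d\tau$, where the matrix functions of the diagonal matrix $\Lambda$ act entrywise.

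Finally I would transform back through $\mathbf{u}=\Phi\mathbf{y}$, substituting the initial data and using $\Phi\cos(\sqrt{\Lambda}t)\Phi^{-1}=\cos(\sqrt{A}t)$ and $\Phi\sinc(\sqrt{\Lambda}t)\Phi^{-1}=\sinc(\sqrt{A}t)$ for the two boundary terms. The only genuinely delicate point is the forcing term: there $\Phi$ multiplies from the left while $\Phi^\top$ (not $\Phi^{-1}$) appears on the right, so the factors do not immediately combine into a similarity transformation. This is reconciled by inserting $\Phi^{-1}\Phi=I$ and invoking $\Phi\Phi^\top=M^{-1}$, which turns $\Phi\sinc(\sqrt{\Lambda}(t-\tau))\Phi^\top$ into $\sinc(\sqrt{A}(t-\tau))M^{-1}$ and produces exactly \cref{eq: semi_discrete_solution}. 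An alternative, equally valid route would be to verify directly that \cref{eq: semi_discrete_solution} satisfies \cref{eq: system_odes} together with the initial conditions by differentiating the matrix functions under the integral sign; the diagonalization argument is preferable because it explains where the formula comes from rather than merely confirming it.
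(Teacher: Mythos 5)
Your proof is correct and follows essentially the same route as the paper: simultaneous diagonalization of $(K,M)$, decoupling into scalar ODEs handled by \Cref{lem: scalar_ode}, and back-transformation using $A=\Phi\Lambda\Phi^{-1}$ together with $\Phi\Phi^{T}=M^{-1}$ (the paper's $U$ plays the role of your $\Phi$, with $M^{-1}=UU^{T}$ closing the forcing term in exactly the same way). The only cosmetic difference is that you construct the $M$-orthonormal eigenbasis explicitly via the symmetric square root $M^{1/2}$, whereas the paper cites its existence from matrix perturbation theory --- a variant the paper itself acknowledges as one of several equivalent derivations.
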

\begin{proof}
There exist multiple equivalent ways of deducing the result, e.g. via the spectral decomposition of $M^{-1}K$, or in a more symmetric way using the eigenbasis of $(K,M)$ or the Cholesky decomposition of $M$. We will employ the eigenbasis approach. It is well-known that if $(K,M)$ is a symmetric matrix pair and $K$ and $M$ are positive definite, then there exists an invertible matrix of eigenvectors $U$ such that
\begin{equation}
\label{eq: joint_diagonalization}
    U^TKU = D, \qquad U^TMU = I,
\end{equation}
where $D=\diag(\lambda_1,\dots,\lambda_n)$ is the diagonal matrix of positive eigenvalues (see e.g. \citep[][Theorem VI.1.15]{stewart1990matrix}) and $KU=MUD$ is the associated eigendecomposition. In particular, the matrix $U$ forms an $M$-orthonormal basis of $\mathbb{R}^n$. We change basis and set $\mathbf{u}(t)=U\mathbf{x}(t)$ (i.e. $\mathbf{x}(t)=U^{-1}\mathbf{u}(t)$). Substituting this expression in \cref{eq: system_odes}, pre-multiplying by $U^T$ and using \cref{eq: joint_diagonalization}, we obtain the set of uncoupled equations
\begin{equation*}
\begin{cases}
    \ddot{\mathbf{x}}(t) + D\mathbf{x}(t) = U^T\mathbf{f}(t) \\
    \mathbf{x}(0)=U^{-1}\mathbf{u}_0 \\
    \dot{\mathbf{x}}(0) = U^{-1}\mathbf{v}_0
\end{cases}
\end{equation*}
whose exact solution, thanks to \Cref{lem: scalar_ode}, is given by
\begin{equation}
\label{eq: eigenbasis_comp}
    \mathbf{x}(t)=\cos(\sqrt{D}t)U^{-1}\mathbf{u}_0+t\sinc(\sqrt{D}t)U^{-1}\mathbf{v}_0+\int_0^t (t-\tau)\sinc(\sqrt{D}(t-\tau))U^T\mathbf{f}(\tau) \, d \tau.
\end{equation}
The result now follows after back-transforming to the original variables by pre-multiplying by $U$
\begin{equation*}
    \mathbf{u}(t)=U\mathbf{x}(t)=U\cos(\sqrt{D}t)U^{-1}\mathbf{u}_0+tU\sinc(\sqrt{D}t)U^{-1}\mathbf{v}_0+\int_0^t (t-\tau)U\sinc(\sqrt{D}(t-\tau))U^{-1}UU^T\mathbf{f}(\tau) \, d \tau
\end{equation*}
and noting that $M^{-1}K=UDU^{-1}$ is the spectral decomposition of $M^{-1}K$ and $M^{-1}=UU^T$. 
\end{proof}

Practically speaking, \Cref{cor: semi_discrete_solution} is rather inconvenient but serves as the starting point for exponential integrators \cite{hochbruck1999gautschi,grimm2005error,grimm2006error}, which are another class of numerical methods for solving \cref{eq: system_odes}. Nevertheless, the result nicely simplifies for special choices of right-hand sides:
\begin{enumerate}
    \item If $\mathbf{f}(t)=\mathbf{b}$ is a constant vector, then, similarly to the scalar case, the solution reduces to (see e.g. \cite{voet2020computation})
    \begin{equation}
    \label{eq: exact_sol_case_1}
    \mathbf{u}(t)=\cos(\sqrt{A}t)\mathbf{u}_0+t\sinc(\sqrt{A}t)\mathbf{v}_0+ \frac{t^2}{2}\sinc^2\left(\frac{\sqrt{A}t}{2}\right)M^{-1}\mathbf{b}.
    \end{equation}
    \item If $\mathbf{f}(t)=\sin(\tilde{\omega} t)\mathbf{b}$ and $\tilde{\omega}^2 \notin \Lambda(K,M)$, then the solution reduces to
    \begin{equation}
    \label{eq: exact_sol_case_2}
        \mathbf{u}(t)=\cos(\sqrt{A}t)\mathbf{u}_0+t\sinc(\sqrt{A}t)\mathbf{v}_0+ (\tilde{\omega}^2I-A)^{-1}(\tilde{\omega} t \sinc(\sqrt{A}t) - \sin(\tilde{\omega}t)I)M^{-1}\mathbf{b}.
    \end{equation}
\end{enumerate}

Due to the separability of the manufactured solution in \Cref{ex: counter_example_1D}, the right-hand side vector takes the form described in the second case above and therefore the exact solution is known explicitly. We now reproduce \Cref{fig: 1D_Laplace_trimming_dynamics_solution_snapshots_p3_eps_1e-6_Cp_1_explicit} by comparing the fully discrete solution to the exact semi-discrete one \eqref{eq: exact_sol_case_2} for the lumped mass. The results, shown in \Cref{fig: 1D_Laplace_trimming_dynamics_solution_snapshots_p3_eps_1e-6_exact}, indicate that the exact semi-discrete solution completely overlaps with the fully discrete one and exclude time integration as a reason for the discrepancy.

\begin{figure}[H]
    \centering
    \includegraphics[width=\linewidth]{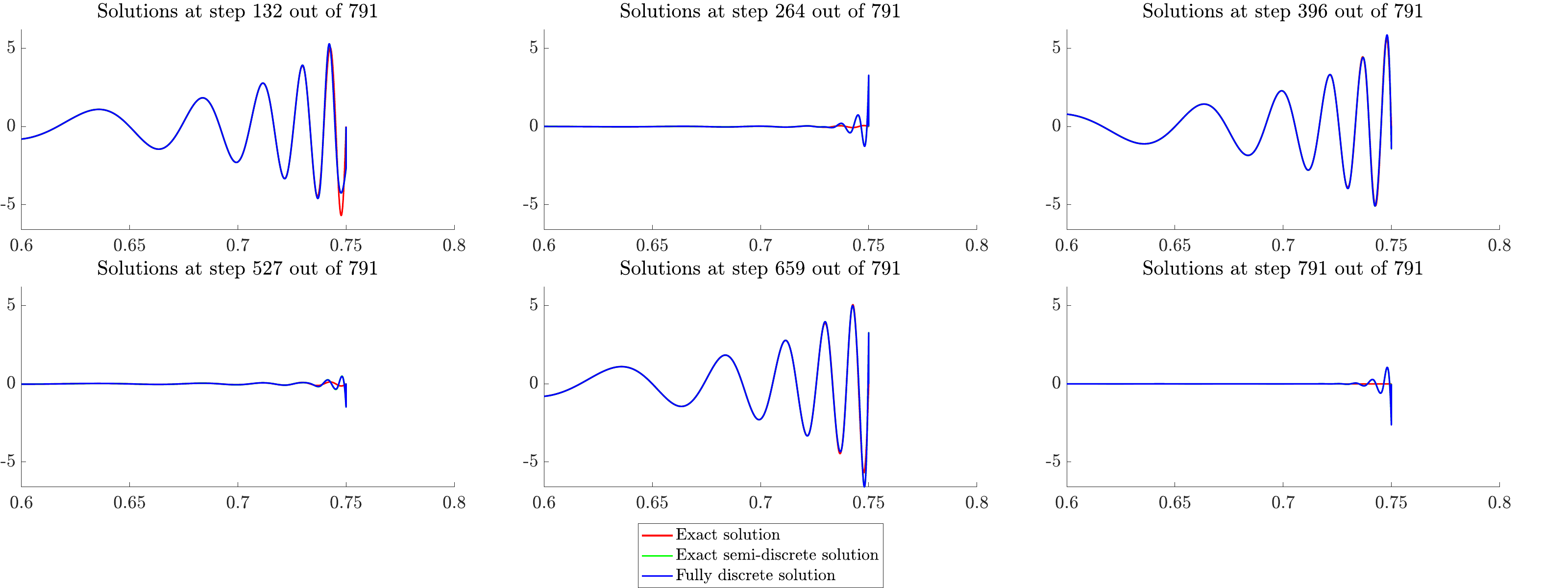}
    \caption{Exact solution, exact semi-discrete solution and fully discrete solution for the lumped mass in \Cref{ex: counter_example_1D} for cubic $C^2$ B-splines}
    \label{fig: 1D_Laplace_trimming_dynamics_solution_snapshots_p3_eps_1e-6_exact}
\end{figure}

From now on, we will focus on the spatial discretization error. Our error analysis will (partially) explain the differences observed in the examples of \Cref{se: motivation} between the consistent and lumped mass approximations. While the material in the first part of the analysis is rather standard and closely follows the presentation in \cite{hughes2014finite}, we later derive refined bounds on the modal error, drastically simplifying the results from \cite{hughes2014finite}, and highlighting major differences between the consistent and lumped mass approximations. For this purpose, let us return to the weak form \eqref{eq: continuous_weak_form} (with $h=0$ for simplicity): find $u \in V_T$ such that for almost every time $t \in (0,T)$
\begin{align}
 \langle\partial_{tt} u(t), v\rangle + (\nabla u(t), \nabla v)_{L^2(\Omega)^d} &= (f(t),v)_{L^2(\Omega)} + (h(t),v)_{L^2(\partial \Omega_N)} & \forall v \in V, \label{eq: variational_form} \\
 (u(0),v)_{L^2(\Omega)} &= (u_0,v)_{L^2(\Omega)} & \forall v \in V, \nonumber \\
 \langle \partial_t u(0),v \rangle &= \langle v_0,v \rangle & \forall v \in V. \nonumber
\end{align}
We may construct a basis for $V$ by solving the associated generalized eigenvalue problem
\begin{equation}
\label{eq: gen_eig_pb}
    (\nabla u, \nabla v)_{L^2(\Omega)^d}=\lambda (u,v)_{L^2(\Omega)} \quad \forall v \in V 
\end{equation}
Let $\{(\lambda_i, u_i)\}_{i=1}^{\infty}$ denote the eigenvalue/eigenfunction pairs solution of \eqref{eq: gen_eig_pb}. Since the bilinear forms involved are positive definite, all eigenvalues are real positive and we may assume they are ordered in increasing algebraic order such that $0<\lambda_1 \leq \lambda_2 \leq \dots$. Moreover, the (normalized) eigenfunctions form an $L^2$-orthonormal basis for $V$ such that the following relations hold:
\begin{equation*}
    (\nabla u_i, \nabla u_j)_{L^2(\Omega)^d}=\lambda_i \delta_{ij}, \qquad (u_i,u_j)_{L^2(\Omega)}=\delta_{ij}.
\end{equation*}
By expanding the solution $u$ in the eigenfunction basis $u(\mathbf{x},t)=\sum_{j=1}^{\infty} d_j(t) u_j(\mathbf{x})$, substituting the expression back in \cref{eq: variational_form}, testing for $v=u_i$ and using the orthogonality relations, we obtain the uncoupled system of equations
\begin{equation}
\label{eq: uncoupled_system}
\begin{cases}
    \ddot{d}_i(t) + \lambda d_i(t) = f_i(t) \qquad \text{for } t \in [0,T],\\
    d_i(0) = u_{0,i}, \\
    \dot{d}_i(0) = v_{0,i},
\end{cases}
\end{equation}
where we have denoted $f_i=(f,u_i)_{L^2}$, $u_{i,0}=(u_0, u_i)_{L^2}$ and $v_{i,0}=\langle v_0,u_i \rangle$ the coefficients for the $L^2$ projection of the right-hand side and initial conditions in the eigenbasis. The solution of \cref{eq: uncoupled_system}, according to \Cref{lem: scalar_ode}, is given by
\begin{equation}
\label{eq: di}
    d_i(t)=u_{i,0}\cos(\omega_i t)+\frac{v_{i,0}}{\omega_i}\sin(\omega_i t)+\frac{1}{\omega_i}\int_0^t \sin(\omega_i(t-\tau))f_i(\tau) \mathrm{d} \tau \quad i=1,2,\dots, \infty,
\end{equation}
where $\omega_i=\sqrt{\lambda_i}$ denotes the $i$th exact eigenfrequency. The previous steps still hold if one seeks an approximate solution $u^h$ in a finite dimensional subspace $V^h \subset V$ with dimension $n=\dim(V^h)$. The coefficients of the expansion $u^h(\mathbf{x},t)=\sum_{j=1}^n d^h_j(t) u^h_j(\mathbf{x})$ are then similarly given by
\begin{equation}
\label{eq: di^h}
    d^h_i(t)=u^h_{i,0}\cos(\omega^h_i t)+\frac{v^h_{i,0}}{\omega^h_i}\sin(\omega^h_i t)+\frac{1}{\omega^h_i}\int_0^t \sin(\omega^h_i(t-\tau))f^h_i(\tau) \mathrm{d} \tau \quad i=1,2,\dots, n,
\end{equation}
where quantities with a superscript $h$ are defined analogously as before. Note that \cref{eq: di^h} is equivalent to the matrix expression \eqref{eq: eigenbasis_comp} derived previously. The approximation error is then given by
\begin{equation*}
    u^h(\mathbf{x},t)-u(\mathbf{x},t)=\sum_{j=1}^{n} d_j^h(t)u^h_j(\mathbf{x})-\sum_{j=1}^{\infty} d_j(t)u_j(\mathbf{x})= e(\mathbf{x},t) - \eta(\mathbf{x},t).
\end{equation*}
where 
\begin{equation*}
    e(\mathbf{x},t)= \sum_{j=1}^n e_j(\mathbf{x},t) = \sum_{j=1}^n d^h_j(t)u^h_j(\mathbf{x})-d_j(t)u_j(\mathbf{x}) \quad \text{and} \quad \eta(\mathbf{x},t)=\sum_{j=n+1}^{\infty}d_j(t)u_j(\mathbf{x})
\end{equation*}
are the resolved and unresolved parts of the approximation error, respectively. Clearly, some of the approximated modes must contribute to the dynamics for the approximation to make any sense. For the isogeometric Galerkin method, either $h$, $p$, $k$-refinement or any combination thereof will eventually lead to an unresolved component whose norm is smaller than a prescribed tolerance. The analysis conducted in \cite{hughes2014finite} then focuses on bounding the modal error $e_j(\mathbf{x},t)=d^h_j(t)u^h_j(\mathbf{x})-d_j(t)u_j(\mathbf{x})$ as
\begin{equation}
\label{eq: modal_errors}
    \|e\| \leq \sum_{j=1}^n \|e_j\|
\end{equation}
holds in any norm. However, the simple triangle inequality in \eqref{eq: modal_errors} does not account for potential (favorable) cancellation of modal errors. Moreover, the pairing of the $j$th exact modal component with the $j$th approximate one also implicitly assumes a suitable labeling of the approximate eigenpairs. Namely, the $j$th approximate eigenpair is indeed an approximation of the $j$th exact one. While this usually holds true for the consistent mass, it typically does not for the lumped mass, as shown in \Cref{ex: counter_example_1D}. The following theorem, which holds only for the consistent mass, provides a refined bound on the modal error, improving the results in \cite{hughes2014finite}. Its proof is delayed to \Cref{se: error_analysis}, where the elliptic case is also treated for completeness.

\begin{theorem}
\label{th: modal_error_hyperbolic}
For a consistent mass approximation, the $L^2$ norm of the modal error is bounded by
\begin{flalign}
\|e_j\|_{L^2} &\leq \|u_0\|_{L^2} \left(2\|u^h_j-u_j\|_{L^2}+|\cos(\omega^h_j t)-\cos(\omega_j t)|\right) & \label{eq: error_displacement} \\
 &+\frac{\|v_0\|_{L^2}}{\omega_j}\left(\frac{\omega^h_j-\omega_j}{\omega_j}+2\|u^h_j-u_j\|_{L^2}+|\sin(\omega^h_j t)-\sin(\omega_j t)|\right) & \label{eq: error_velocity} \\
 &+\frac{1}{\omega_j} \int_{0}^t \|f(\tau)\|_{L^2} \mathrm{d} \tau \left( \frac{\omega^h_j-\omega_j}{\omega_j}+2\|u^h_j-u_j\|_{L^2} + \max_{\tau \in [0,t]} |\sin(\omega^h_j \tau)-\sin(\omega_j \tau)| \right). & \label{eq: error_forcing}
\end{flalign}
\end{theorem}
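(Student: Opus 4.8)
The plan is to exploit the additive structure of the modal coefficient $d_j$ in \eqref{eq: di}. Writing $d_j = d_j^{u_0} + d_j^{v_0} + d_j^{f}$ for the three summands associated with the initial displacement, the initial velocity and the forcing (and analogously for $d_j^h$), the modal error splits as $e_j = e_j^{u_0} + e_j^{v_0} + e_j^{f}$, with each piece of the common form $c^h(t)\,u^h_j - c(t)\,u_j$ for a scalar, time-dependent coefficient $c$. By the triangle inequality it then suffices to bound each contribution separately and sum them; the three lines \eqref{eq: error_displacement}, \eqref{eq: error_velocity} and \eqref{eq: error_forcing} will emerge one from each contribution.

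For a single contribution I would first peel off the eigenfunction discrepancy by writing
\begin{equation*}
 c^h u^h_j - c\,u_j = c^h\,(u^h_j - u_j) + (c^h - c)\,u_j,
\end{equation*}
so that, using the normalization $\|u^h_j\|_{L^2} = \|u_j\|_{L^2} = 1$,
\begin{equation*}
 \|c^h u^h_j - c\,u_j\|_{L^2} \le |c^h|\,\|u^h_j - u_j\|_{L^2} + |c^h - c|.
\end{equation*}
The scalar difference $c^h - c$ is then handled by a telescoping sum in which exactly one factor is changed at a time: the projection coefficient ($u^h_{j,0}\!\to\!u_{j,0}$, etc.), the frequency-dependent prefactor ($1/\omega^h_j\!\to\!1/\omega_j$, present only in the velocity and forcing terms), and the trigonometric factor ($\cos(\omega^h_j t)\!\to\!\cos(\omega_j t)$ or the analogous sine). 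Both the coefficient change and the $|c^h|$ term each contribute a factor $\|u^h_j - u_j\|_{L^2}$, which is exactly why the bounds carry the coefficient $2\|u^h_j - u_j\|_{L^2}$.

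The estimate rests on two auxiliary ingredients. First, Cauchy--Schwarz applied to the $L^2$ projections gives $|u^h_{j,0}| \le \|u_0\|_{L^2}$ and $|u^h_{j,0} - u_{j,0}| = |(u_0,\,u^h_j - u_j)_{L^2}| \le \|u_0\|_{L^2}\|u^h_j - u_j\|_{L^2}$, together with the analogous inequalities for $v_0$ (taking $v_0 \in L^2(\Omega)$) and for $f(\tau)$; these produce the prefactors $\|u_0\|_{L^2}$, $\|v_0\|_{L^2}$ and $\int_0^t \|f(\tau)\|_{L^2}\,d\tau$. Second, since $V^h \subset V$, the Courant--Fischer min--max principle yields $\omega_j \le \omega^h_j$ for $j \le n$, which I would use to replace each $1/\omega^h_j$ by the larger $1/\omega_j$ and to bound $|1/\omega^h_j - 1/\omega_j| = (\omega^h_j - \omega_j)/(\omega^h_j \omega_j) \le (\omega^h_j - \omega_j)/\omega_j^2$; this is the source of the $(\omega^h_j - \omega_j)/\omega_j$ terms inside \eqref{eq: error_velocity} and \eqref{eq: error_forcing}.

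The main obstacle is the forcing contribution \eqref{eq: error_forcing}, where the telescoping must be performed inside the Duhamel integral. After splitting the integrand $\tfrac{1}{\omega^h_j}\sin(\omega^h_j(t-\tau))f^h_j(\tau) - \tfrac{1}{\omega_j}\sin(\omega_j(t-\tau))f_j(\tau)$ into its three one-factor-at-a-time differences, the trigonometric piece leaves a $\tau$-dependent factor $|\sin(\omega^h_j(t-\tau)) - \sin(\omega_j(t-\tau))|$ that cannot be pulled out of the integral directly. I would bound it uniformly by $\max_{\tau\in[0,t]} |\sin(\omega^h_j \tau) - \sin(\omega_j \tau)|$ (via the substitution $t-\tau \mapsto \tau$), which frees it from the integral and reproduces precisely the max-term in \eqref{eq: error_forcing}; the remaining $|f_j(\tau)|$ and $|f^h_j(\tau)|$ factors are controlled by $\|f(\tau)\|_{L^2}$ and integrated to give $\int_0^t\|f(\tau)\|_{L^2}\,d\tau$. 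No favorable cancellation is exploited, so all three contributions add with positive signs to yield the stated bound.
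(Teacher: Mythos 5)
Your proposal is correct and follows essentially the same route as the paper's proof: the same one-factor-at-a-time telescoping triangle inequalities, the same Cauchy--Schwarz bounds on the projection coefficients (requiring $v_0\in L^2(\Omega)$), the same use of $\omega_j\leq\omega^h_j$ for the conforming consistent-mass approximation, and the same uniform max bound on the sine difference inside the Duhamel integral. The only cosmetic difference is the order of splitting---you decompose $e_j$ by source (displacement/velocity/forcing) first and then peel off the eigenfunction error within each piece, whereas the paper writes $e_j=(d^h_j-d_j)u^h_j+(u^h_j-u_j)d_j$ once and then telescopes the scalar difference $(\mathrm{I})+(\mathrm{II})+(\mathrm{III})$---but the two orderings yield term-by-term identical bounds, including the factor $2\|u^h_j-u_j\|_{L^2}$ in each line.
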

In \Cref{th: modal_error_hyperbolic}, \cref{eq: error_displacement,eq: error_velocity} are the error contributions from the initial conditions, while \cref{eq: error_forcing} is the error contribution from the forcing term $f$. The conclusions of \Cref{th: modal_error_hyperbolic} are the same as in \cite{hughes2014finite} and reveal the danger looming over Galerkin approximations of \cref{eq: wave_equation} (and hyperbolic PDEs more generally): contrary to \cref{eq: error_velocity,eq: error_forcing}, where errors in the high frequencies and modes are dampened away thanks to the $1/\omega_j$ factor, if $u_0 \neq 0$, those same errors may now degrade the solution. This finding is a sharp contrast to the elliptic and parabolic cases and was another argument in \cite{hughes2014finite} for adopting IGA, thanks to its enhanced approximation of higher frequencies and modes. Nevertheless, this issue practically only occurs if $u_0$ has a significant component along one of the inaccurate modes, which is not captured by \Cref{th: modal_error_hyperbolic}. Therefore, while this theorem certainly highlights the difficulties of the hyperbolic case, its practical value is limited. \Cref{eq: di^h} instead provides a better, albeit more intuitive, understanding. Most importantly, \cref{eq: di^h} explicitly features $u^h_{i,0}, v^h_{i,0}$ and $f^h_i$, the coefficients for the $L^2$ projection of the initial conditions and forcing term along the $i$th approximate eigenmode, which were lost in the proof of \Cref{th: modal_error_hyperbolic}.

In all examples of \Cref{se: motivation}, $u_0=0$ and the first term in \eqref{eq: di^h} vanishes. However, $v_0$, $f$ and $h$ (which prescribes Neumann boundary conditions) are all nonzero and the latter is incorporated in the right-hand side of the weak form. In all examples, the second initial condition is $v_0(\mathbf{x})=C q(\mathbf{x})$, where $q$ is the spatial part of the manufactured solution and $C$ is a constant. As shown in \Cref{fig: 1D_Laplace_trimming_projection_eigKM_p3_eps_1e-6}, the projection of $q(\mathbf{x})$ relative to the eigenbasis for the consistent mass activates a broad range of frequencies, including high frequencies associated to spurious modes. However, their contribution is essentially wiped out thanks to $\omega_i$ appearing in the denominator in \cref{eq: di^h}. The same holds true for the contribution from the forcing term. Thus, the consistent mass provides an exceedingly good approximation, despite spurious modes being activated. Unfortunately, the same cannot be said for the lumped mass approximation, which is obtained by substituting the bilinear form $b(u^h,v^h)$ with the approximate one $\tilde{b} \colon V^h \times V^h \to \mathbb{R}$, defined as
\begin{equation*}
    \tilde{b}(u^h,v^h) = \mathbf{u}^T \mathcal{L}(M) \mathbf{v},
\end{equation*}
where $\mathbf{u}$ and $\mathbf{v}$ are the coefficient vectors of $u_h$ and $v_h$, respectively, in the B-spline basis. It is important to note that the lumped mass is defined algebraically from the consistent mass and explicitly depends on the B-spline basis. Therefore, $\tilde{b}(u^h,v^h)$ is only defined for functions in $V^h$ and generally not in the larger Sobolev space $V$. Nevertheless, thanks to the nonnegativity of the B-spline basis, $\tilde{b}(u^h,v^h)$ is positive definite and induces a discrete inner product. Therefore, there exist approximate eigenvalue/eigenfunction pairs $\{(\tilde{\lambda}^h_i, \tilde{u}^h_i)\}_{i=1}^n$ such that
\begin{equation*}
    a(\tilde{u}^h_i, \tilde{u}^h_j)=\tilde{\lambda}^h_i \delta_{ij}, \qquad \tilde{b}(\tilde{u}^h_i,\tilde{u}^h_j)=\delta_{ij}.
\end{equation*}
holds. In particular, the eigenbases $\mathcal{U}=\{u^h_i\}_{i=1}^n$ and $\tilde{\mathcal{U}}=\{\tilde{u}^h_i\}_{i=1}^n$ are both orthonormal bases for $V^h$, albeit in different inner products. The derivation then essentially follows the same lines by expanding the approximate solution $\tilde{u}^h(\mathbf{x},t)$ in the approximate eigenbasis $\tilde{\mathcal{U}}$ and testing against $v \in \tilde{\mathcal{U}}$. Eventually, \cref{eq: di^h} still holds after crowning all quantities with tildes. However, note that $\tilde{u}^h_{i,0}$ and $\tilde{v}^h_{i,0}$ are now the coefficients for the $L^2$ projection of the initial conditions relative to the eigenbasis $\tilde{\mathcal{U}}$.

Denoting $\Pi \colon V \to V^h$ the standard $L^2$ projection, \Cref{fig: 1D_Laplace_trimming_projection_p3_eps_1e-6} compares the coefficients of $\Pi(q)$ relative to the eigenbases $\mathcal{U}$ and $\tilde{\mathcal{U}}$ for \Cref{ex: counter_example_1D}. While the coefficients of $\Pi(q)$ relative to $\mathcal{U}$ are uniformly distributed over the entire frequency spectrum, the second coefficient of $\Pi(q)$ relative to $\tilde{\mathcal{U}}$ dominates all others. Yet, according to \Cref{fig: 1D_Laplace_trimming_dynamics_p3_eps_1e-6_eigenfunctions}, the corresponding eigenmode is completely spurious and contrary to the consistent mass, it is associated to a small frequency. Thus, its contribution will now unfortunately transpire in the solution.

\begin{figure}[H]
     \centering
     \begin{subfigure}[t]{0.48\textwidth}
    \centering
    \includegraphics[width=\textwidth]{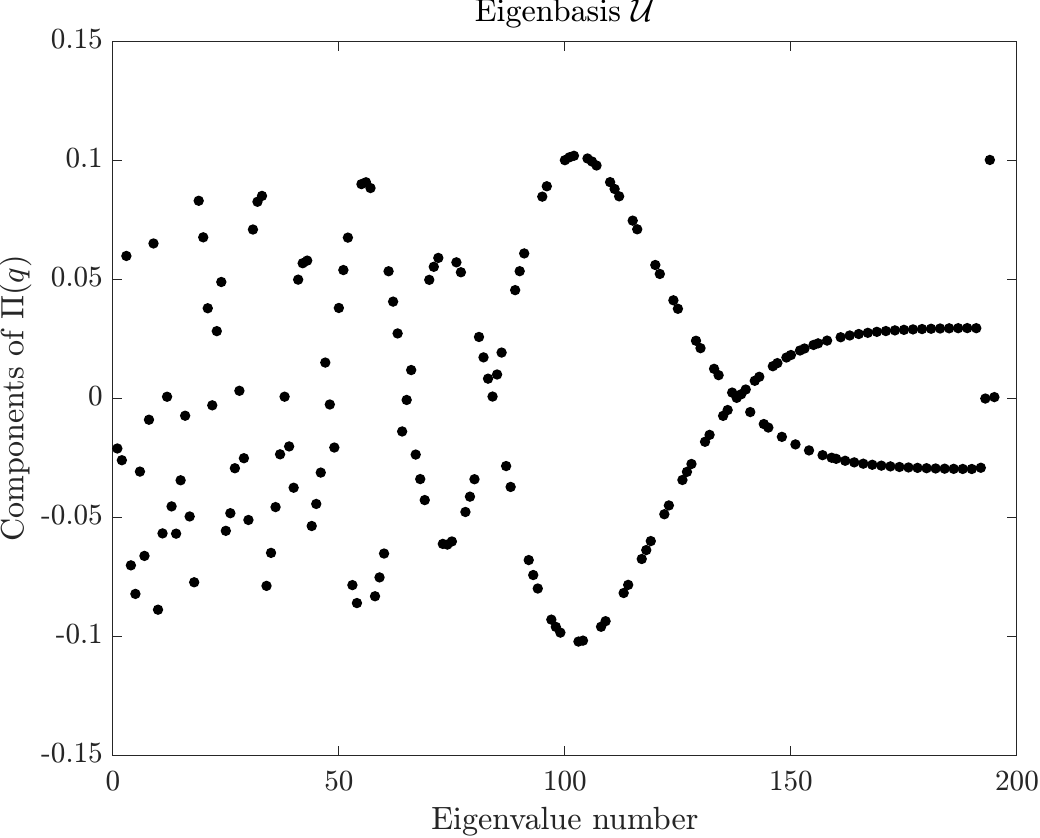}
    \caption{In $\mathcal{U}$}
    \label{fig: 1D_Laplace_trimming_projection_eigKM_p3_eps_1e-6}
     \end{subfigure}
     \hfill
     \begin{subfigure}[t]{0.48\textwidth}
    \centering
    \includegraphics[width=\textwidth]{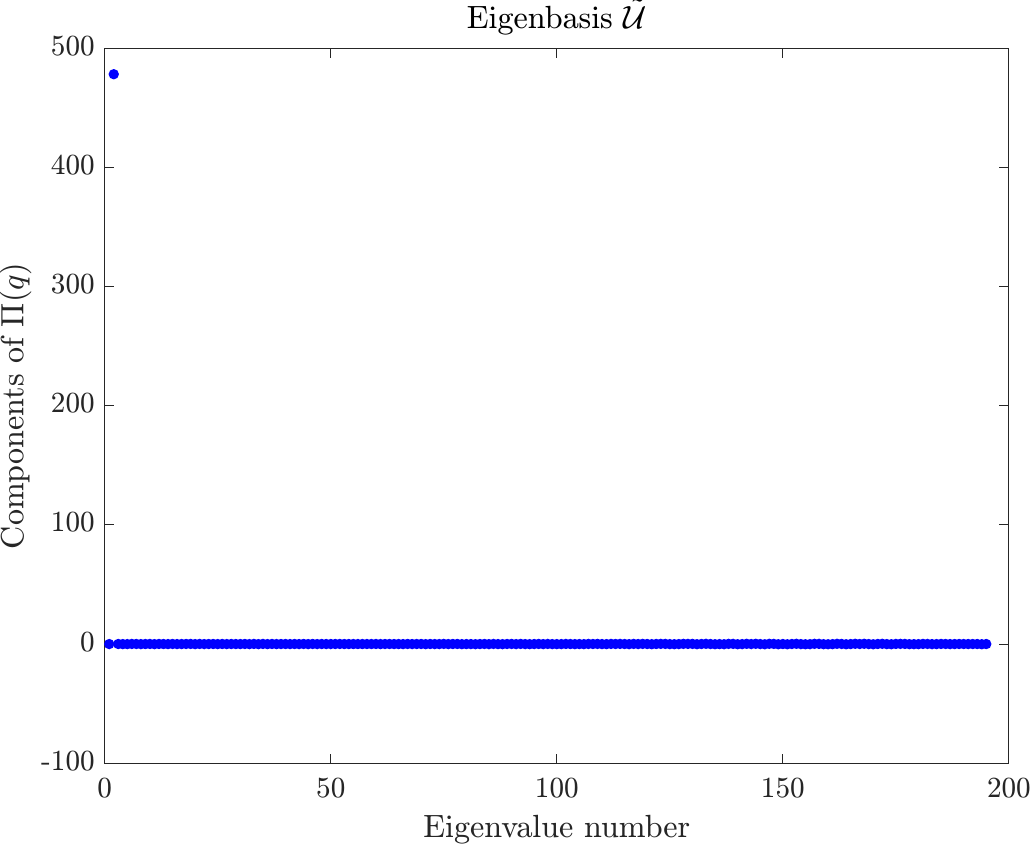}
    \caption{In $\tilde{\mathcal{U}}$}
    \label{fig: 1D_Laplace_trimming_projection_eigKP_p3_eps_1e-6}
     \end{subfigure}
     \hfill
    \caption{Coefficients of $\Pi(q)$ for \Cref{ex: counter_example_1D}}
    \label{fig: 1D_Laplace_trimming_projection_p3_eps_1e-6}
\end{figure}

The same issue arises for the contribution from the right-hand side but its magnitude is much smaller. Thus, in \Cref{ex: counter_example_1D}, the discrete solution for the lumped mass essentially behaves as the spurious mode $u_2^h$ with a sinusoidal amplitude (see \cref{eq: di^h}):
\begin{equation*}
    \tilde{u}^h(\mathbf{x},t) \approx \frac{\tilde{v}^h_{2,0}}{\tilde{\omega}^h_2}\sin(\tilde{\omega}^h_2 t) \tilde{u}_2^h(\mathbf{x}).
\end{equation*}
Both the frequency and amplitude of $a(t)=\frac{\tilde{v}^h_{2,0}}{\tilde{\omega}^h_2}\sin(\tilde{\omega}^h_2 t)$ depend on the spurious frequency $\tilde{\omega}^h_2$. Yet, the analysis conducted in \cite{bioli2024theoretical} recently indicated that the smallest discrete eigenvalue for a lumped mass approximation behaves as $\mathcal{O}(\epsilon^{p-2})$ if $p>2$. This finding explains the pattern observed in \Cref{fig: 1D_Laplace_trimming_dynamics_L2_error_LM_p3_explicit}. Indeed, for $p=3$, $\tilde{\omega}^h_2 = \sqrt{\tilde{\lambda}^h_2} = \mathcal{O}(\sqrt{\epsilon})$, which means that decreasing $\epsilon$ by a factor $100$ roughly decreases the frequency and increases the amplitude of $a(t)$ be a factor $10$ while $\tilde{u}_2^h$ barely changes at all. Thus $\tilde{u}^h(\mathbf{x},t)$ becomes increasingly inaccurate as $\epsilon \to 0$ or $p$ increases and it directly transpires in the $L^2$ error shown in \Cref{fig: 1D_Laplace_trimming_dynamics_L2_error_LM_p3_explicit}. This qualitative argument also holds in higher dimensions for the other examples.

In conclusion, the staggering loss of accuracy for the lumped mass formulation results from spurious ``low-energy'' modes; i.e. spurious modes associated to small eigenvalues. Although spurious modes also appear for a consistent mass formulation, they are associated to large outlier eigenvalues, which effectively filter out the corresponding modal contributions (except for the first initial condition). On the contrary, for the lumped mass formulation, nothing prevents spurious modes from entering the solution if they are initially activated. As we have seen in \Cref{se: motivation}, spurious modes may be activated in unfortunate circumstances. In the next section, we propose a stabilization technique to prevent spurious low-energy modes from surfacing at all.

\section{Stabilization technique}
\label{se: stabilization}
Since diverging eigenvalues for the consistent mass and decaying eigenvalues for the lumped mass have exactly the same origin \cite{bioli2024theoretical}, the most convenient way of preventing both effects is by modifying the discretization prior to mass lumping. Polynomial extension techniques are a known remedy against small trimmed elements \cite{hollig2001weighted,hollig2003finite,marussig2017stable,buffa2020minimal,burman2023extension}. Roughly speaking, the original idea from \cite{buffa2020minimal} is to identify badly cut elements and extend the B-splines from good neighboring elements into the bad elements, thereby replacing the troublesome basis functions $B$ whose active support $\supp(B) \cap \Omega$ is very small. The idea, applicable to smooth spline spaces as well as standard $C^0$ finite elements, is formulated more precisely in this section and largely follows the presentation in \cite{buffa2020minimal,burman2023extension}.

We consider a tensor product spline space $\widehat{\mathcal{S}}_{h,p,k}$ defined over a uniform B-spline tensor product mesh $\widehat{\mathcal{T}}_{h}$ discretizing the fictitious domain $\widehat{\Omega}$ and parameterized by a mesh size $h>0$, spline order $p$ and continuity $ 0 \leq k \leq p-1$. Let $\widehat{\mathcal{B}}=\{ B_i \}_{i=1}^{\widehat{n}}$ denote the B-spline basis constructed over the background mesh $\widehat{\mathcal{T}}_{h}$ and $\widehat{n} = \dim(\widehat{\mathcal{S}}_{h,p,k})$. Trimming curves (or surfaces) cut through the fictitious domain and define the physical domain, as illustrated in \Cref{fig: trimming_notation_1}. Those curves and surfaces only depend on the geometry description, not its discretization. Consequently, trimming may create arbitrarily small elements and basis functions whose support only contains such elements typically cause great stability and conditioning issues and must be carefully treated \cite{marussig2017stable,de2017condition,de2019preconditioning,de2023stability,buffa2020minimal}. Firstly, we define the so-called active mesh
\begin{equation*}
    \mathcal{T}_h = \{ T \in \widehat{\mathcal{T}}_h \colon T \cap \Omega \neq \emptyset \},
\end{equation*}
that defines the computational domain $\Omega_h = \bigcup_{T \in \mathcal{T}_h} T$ (see \Cref{fig: trimming_notation_1}). Secondly, we refer to the active basis
\begin{equation*}
    \mathcal{B}= \{ B \in \widehat{\mathcal{B}} \colon \supp(B) \cap \Omega \neq \emptyset \}
\end{equation*}
as the set of all basis functions whose support intersects (nontrivially) the physical domain. Let $I$ denote its index set and $\mathcal{S}_{h,p,k} = \Span(\mathcal{B})$ the associated spline space. The support of some active basis functions may only intersect the physical domain over small trimmed elements. The idea for circumventing this issue, originality proposed in \cite{buffa2020minimal} and inspired from \cite{haslinger2009new}, is to partition the active mesh in ``good'' (or large) and ``bad'' (or small) elements. Given a threshold tolerance $\gamma \in [0,1]$, an element $T \in \mathcal{T}_h$ is labeled as ``good'' (or large) if
\begin{equation}
\label{eq: good_element}
    |T \cap \Omega | \geq \gamma |T|
\end{equation}
and is ``bad'' (or small) otherwise. We denote $\mathcal{T}_h^L \subseteq \mathcal{T}_h$ the set of large elements and $\mathcal{T}_h^S = \mathcal{T}_h \setminus \mathcal{T}_h^L$ the set of small elements. \Cref{fig: trimming_notation_2} illustrates the partitioning of the mesh into good and bad elements for some small value of $\gamma$.

\begin{figure}[H]
     \centering
     \begin{subfigure}[t]{0.48\textwidth}
    \centering
    \includegraphics[width=\textwidth]{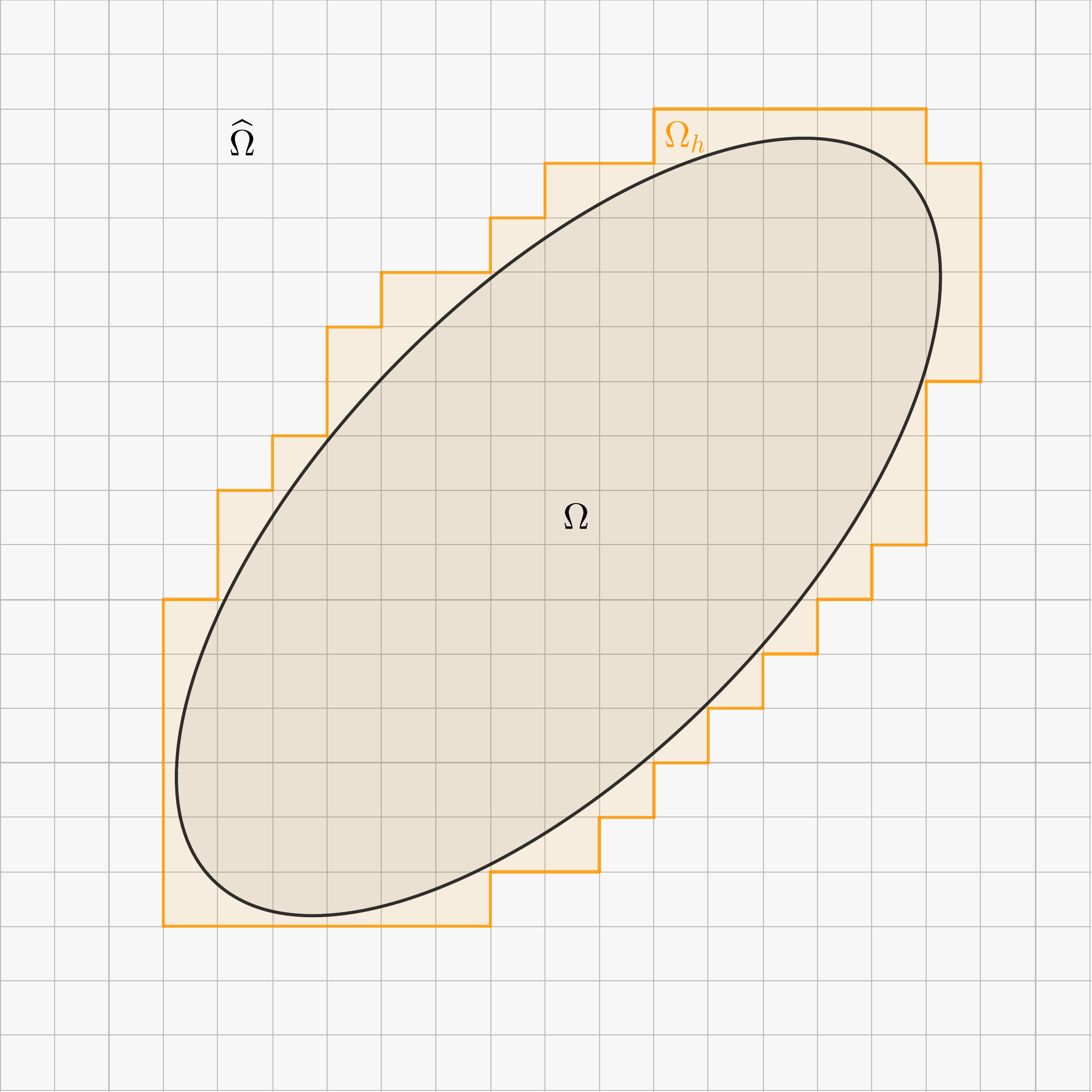}
    \caption{Fictitious domain $\widehat{\Omega}$ (light gray), physical domain $\Omega$ (dark gray) and computational domain $\Omega_h$ (orange)}
    \label{fig: trimming_notation_1}
     \end{subfigure}
     \hfill
     \begin{subfigure}[t]{0.48\textwidth}
    \centering
    \includegraphics[width=\textwidth]{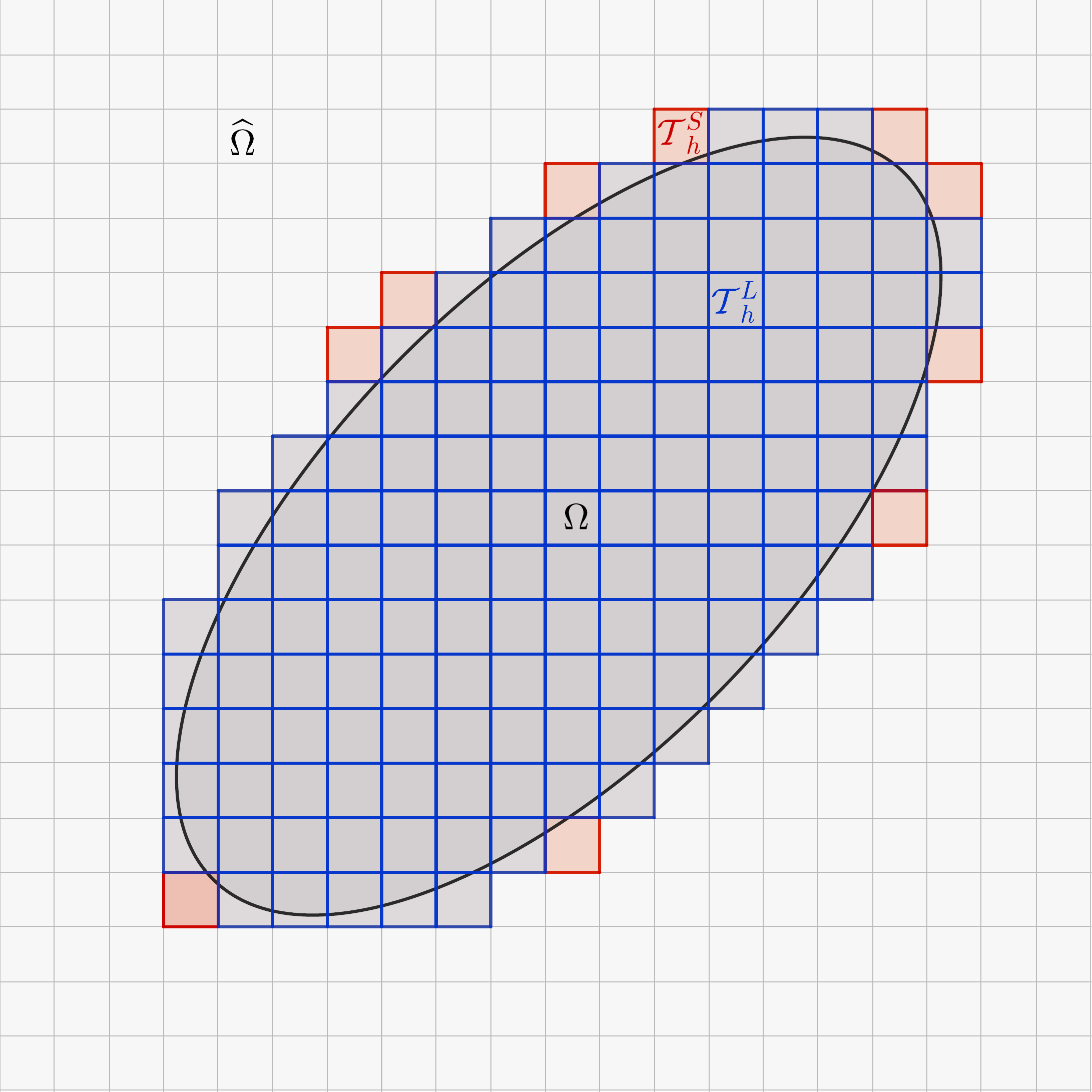}
    \caption{Partitioning of the active mesh $\mathcal{T}_h$ in good elements $\mathcal{T}_h^L$ (blue) and bad elements $\mathcal{T}_h^S$ (red) for a small value of $\gamma$}
    \label{fig: trimming_notation_2}
     \end{subfigure}
     \hfill
    \caption{Trimmed domain}
    \label{fig: trimming_notation_1_2}
\end{figure}

The next step is to locally substitute the B-splines on bad elements by their polynomial extension from a good neighboring element. More specifically, if the boundary is sufficiently regular and the mesh sufficiently fine, there exists a map $S_h \colon \mathcal{T}_h^S \to \mathcal{T}_h^L$, which associates to a bad element $T \in \mathcal{T}_h^S$ a good neighboring element $T' \in \mathcal{T}_h^L$. This good neighboring element may for instance either maximize $|T' \cap \Omega|$ among all neighbors \cite{buffa2020minimal} or minimize the distance between the centers of mass of $|T \cap \Omega|$ and $|T' \cap \Omega|$ \cite{burman2023extension}. Regardless of the strategy chosen, the assembly of system matrices is subsequently locally modified depending on whether the element is good or bad. The assembly of system matrices in IGA still mostly follows standard finite element practice whereby contributions of individual elements are collected and subsequently assembled in global arrays. We recall that there are only $(p+1)^d$ non-vanishing splines on element $T$, whose index set (or connectivity) is defined as
\begin{equation*}
    I_T = \{i \in I \colon T \subseteq \supp(B_i) \}
\end{equation*}
and whose contributions are obtained by simply restricting the integration to the element $T$. For example, for the stiffness and mass matrices defined via the bilinear forms $a,b$ in \eqref{eq: global_contribution}, we define their contribution $a_T, b_T \colon V^h \times V^h \to \mathbb{R}$ to the element $T$ as
\begin{equation}
\label{eq: local_contribution}
    a_T(u^h,v^h) = (\nabla u^h, \nabla v^h)_{L^2(T)^d}, \quad \text{and} \quad b_T(u^h,v^h) = (u^h,v^h)_{L^2(T)}.
\end{equation}
While the assembly on large elements $T \in \mathcal{T}_h^L$ remains unchanged, the assembly on small elements is modified by locally substituting the B-splines living on the small element $T$ by the extension of those living on its good neighbor $T'=S_h(T)$. The (canonical) extension
\begin{equation*}
    \mathcal{E}_{T'} \colon \mathbb{Q}_p(T') \to \mathbb{Q}_p(\Omega)
\end{equation*}
simply extends the polynomial segments on $T'$ to global polynomials on $\Omega$. By construction, this local stabilization technique circumvents the issues tied to basis functions whose support only intersects the physical domain over small trimmed elements. The assembly procedure is exemplarily presented for the stiffness matrix in \Cref{algo: assembly_stiffness}. Adapting it to the mass matrix and right-hand side is straightforward. In the sequel, the stabilized stiffness and mass matrices are denoted $\tilde{K}$ and $\tilde{M}$, respectively, and the latter is typically lumped with any suitable lumping scheme. Note that the block lumping strategies proposed in \cite{voet2024mass} assume a specific block structure encoded in the sparsity pattern of the matrix, whose definition is recalled below.

\begin{definition}[Sparsity pattern]
\label{def: sparsity pattern}
The sparsity pattern of a matrix $A \in \mathbb{R}^{n \times n}$ is the set
\begin{equation*}
    \sparsity(A)=\{(i,j) \colon a_{ij} \neq 0, 1 \leq i,j \leq n\}
\end{equation*}
\end{definition}

Clearly, the assembly procedure in \Cref{algo: assembly_stiffness} alters the sparsity pattern of the system matrices but one easily sees that the number of nonzero entries never increases. 

\begin{lemma}
For the stabilized stiffness and mass matrices assembled in \Cref{algo: assembly_stiffness},
\begin{equation*}
    \sparsity(\tilde{K}) \subseteq \sparsity(K), \quad \text{and} \quad \sparsity(\tilde{M}) \subseteq \sparsity(M).
\end{equation*}
\end{lemma}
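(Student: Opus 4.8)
The plan is to argue entirely at the level of the structural (assembly-induced) sparsity pattern and to show that the stabilized assembly can only ever write into index pairs that the standard assembly already touches. Recall that in the standard procedure each active element $T \in \mathcal{T}_h$ contributes precisely to the block of entries indexed by $I_T \times I_T$, because only the $(p+1)^d$ splines with $i \in I_T$ fail to vanish on $T$. Hence the pattern of $K$ (and, with $a_T$ replaced by $b_T$, of $M$) is
\begin{equation*}
    \sparsity(K) = \bigcup_{T \in \mathcal{T}_h} I_T \times I_T .
\end{equation*}
First I would describe the stabilized pattern in the same way. On large elements $T \in \mathcal{T}_h^L$ the assembly is unchanged and still writes into $I_T \times I_T$. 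On a small element $T \in \mathcal{T}_h^S$, the local B-splines are replaced by the extensions $\mathcal{E}_{T'}(B_i)$ of the splines living on the good neighbour $T'=S_h(T) \in \mathcal{T}_h^L$, so the local contribution on $T$ is indexed by the connectivity $I_{T'}$ of that neighbour rather than by $I_T$. Consequently
\begin{equation*}
    \sparsity(\tilde K) = \Bigl(\bigcup_{T \in \mathcal{T}_h^L} I_T \times I_T\Bigr) \cup \Bigl(\bigcup_{T \in \mathcal{T}_h^S} I_{S_h(T)} \times I_{S_h(T)}\Bigr).
\end{equation*}

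The second step is the inclusion itself, which is then immediate. The first union is contained in $\sparsity(K)$ because $\mathcal{T}_h^L \subseteq \mathcal{T}_h$, and each block $I_{S_h(T)} \times I_{S_h(T)}$ in the second union already appears in $\sparsity(K)$ as the contribution of the element $S_h(T)$ itself, which lies in $\mathcal{T}_h^L \subseteq \mathcal{T}_h$ and is therefore a genuine active element. The key observation — and essentially the whole proof — is that $S_h$ sends every bad element to an active element whose full diagonal block $I_{S_h(T)} \times I_{S_h(T)}$ is already present in the standard pattern; the extension thus \emph{redirects} contributions onto already-occupied positions and never opens a new one. The identical argument yields $\sparsity(\tilde M) \subseteq \sparsity(M)$ verbatim.

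The only point deserving care, and the mild obstacle, is the gap between the structural pattern above and the set of genuinely nonzero entries in \Cref{def: sparsity pattern}. The inclusions are proved for the structural (a priori) pattern; to pass to actual nonzeros one uses that $\sparsity(\tilde K)$ and $\sparsity(\tilde M)$ are contained in their structural patterns, while for $M$ the nonnegativity of the B-spline basis forces every structurally nonzero entry to be genuinely nonzero, so its structural pattern coincides with $\sparsity(M)$. I would simply record that the sparsity pattern is understood in this standard structural sense — no accidental cancellation is assumed to manufacture extra zeros in $K$ or $M$ — under which the stated inclusions hold exactly as written.
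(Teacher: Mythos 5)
Your proof is correct and follows essentially the same route as the paper: both identify the assembly-induced pattern of $\tilde{K}$ as a union of connectivity blocks $I_T \times I_T$, observe that contributions from bad elements are redirected into the blocks $I_{S_h(T)} \times I_{S_h(T)}$ of good elements already present in $\sparsity(K)$, and conclude the inclusion. Your explicit handling of the gap between the structural pattern and the genuinely nonzero entries (trivial containment for $\tilde{K}$, $\tilde{M}$; no accidental cancellation for $K$; nonnegativity for $M$) is slightly more careful than the paper, which simply asserts that element matrices are dense and contributions do not exactly cancel.
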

\begin{proof}
By inspecting \Cref{algo: assembly_stiffness}, 
\begin{equation*}
    \sparsity(\tilde{K}) = \bigcup_{T \in \mathcal{T}_h^L} I_T \times I_T \subseteq \bigcup_{T \in \mathcal{T}_h} I_T \times I_T = \sparsity(K).
\end{equation*}
The first and last equalities follow from the fact that the element matrices are dense and element contributions do not exactly cancel out. The same holds for the mass matrix.
\end{proof}

Thus, we may employ the sparsity pattern of the non-stabilized mass matrix $M$ for lumping its stabilized counterpart, following the technique described in \citep[][Example 5.8]{voet2024mass}. Anyway, regardless of whether the mass matrix is lumped, \Cref{algo: assembly_stiffness} only accounts for so-called large degrees of freedom
\begin{equation*}
    I^L = \bigcup_{T \in \mathcal{T}_h^L} I_T,
\end{equation*}
while small degrees of freedom $I^S = I \setminus I^L$ are left out, which effectively produces zero rows and columns at indices $i,j \in I^S$. Those are subsequently removed in a post-processing step.

\begin{algorithm}[H]
\begin{algorithmic}[1]
\caption{Assembly of the stabilized stiffness matrix}
\label{algo: assembly_stiffness}
\Statex \textbf{Input}: Parameter $\gamma$
\State $[\mathcal{T}_h^L, \mathcal{T}_h^S] = \texttt{partition}(\mathcal{T}_h, \gamma)$ \Comment{Partition the mesh in large and small elements}
\State Initialize $\tilde{K}$
\For{$T \in \mathcal{T}_h$}
    \If{$T \in \mathcal{T}_h^L$}
    \State $\tilde{K}_{ij} \gets \tilde{K}_{ij} + a_T(B_i,B_j) \quad i,j \in I_T$ \Comment{Standard assembly}
    \Else
    \State $T'=S_h(T)$ \Comment{Get good neighboring element}
    \State $\tilde{K}_{ij} \gets \tilde{K}_{ij} + a_T(\mathcal{E}_{T'}(B_i),\mathcal{E}_{T'}(B_j)) \quad i,j \in I_{T'}$ \Comment{Modified assembly}
    \EndIf
\EndFor
\end{algorithmic}
\end{algorithm}

This technique boils down to a non-conforming Galerkin method on a modified space. The approximation space is the space of splines defined on $\Omega^L = \cup_{T \in \mathcal{T}_h^L} T$ which are then extended locally to the small elements in $\mathcal{T}_h^S$. In 1D, if $\gamma$ is small enough, this technique evidently reduces to extended B-splines (see e.g. \cite{hollig2003finite,marussig2017stable} and \Cref{fig: 1D_poly_ext}). However, the approximation space is generally non-conforming and in 2D a priori independent extensions create discontinuities between neighboring elements (see \Cref{fig: 2D_poly_ext}). Consequently, the resulting approximation space is generally not a subspace of $\mathcal{S}_{h,p,k}$, which precludes forming the mass and stiffness matrices by a simple projection. This is not a particular drawback, especially not for nonlinear problems, given that the stiffness matrix is rarely assembled in practice. Alternatively, one may combine the extension with a projection step to restore conformity (see e.g. \cite{burman2023extension}). In either case, the dimension of the modified space is generally smaller than the original one owing to the substitution of basis functions only supported on small trimmed elements. Nevertheless, the resulting space still has good approximation properties, even without the projection \cite{burman2023extension}.

\begin{figure}[H]
     \centering
     \begin{subfigure}[t]{0.48\textwidth}
    \centering
    \includegraphics[width=\textwidth]{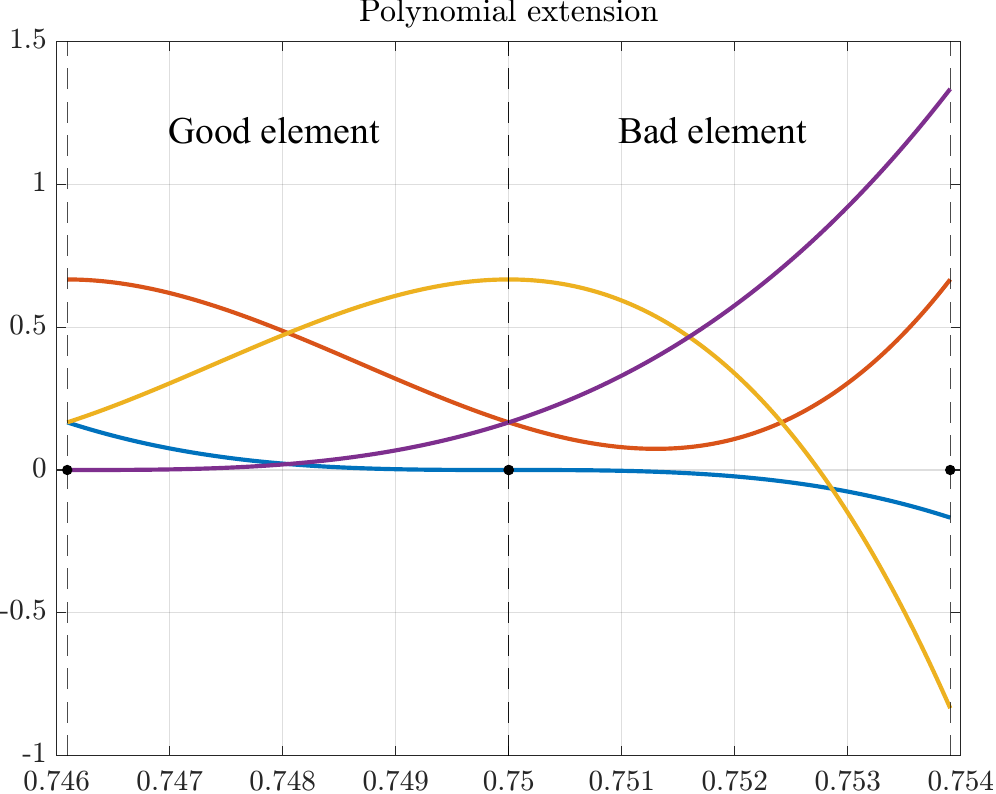}
    \caption{1D}
    \label{fig: 1D_poly_ext}
     \end{subfigure}
     \hfill
     \begin{subfigure}[t]{0.48\textwidth}
    \centering
    \includegraphics[width=\textwidth]{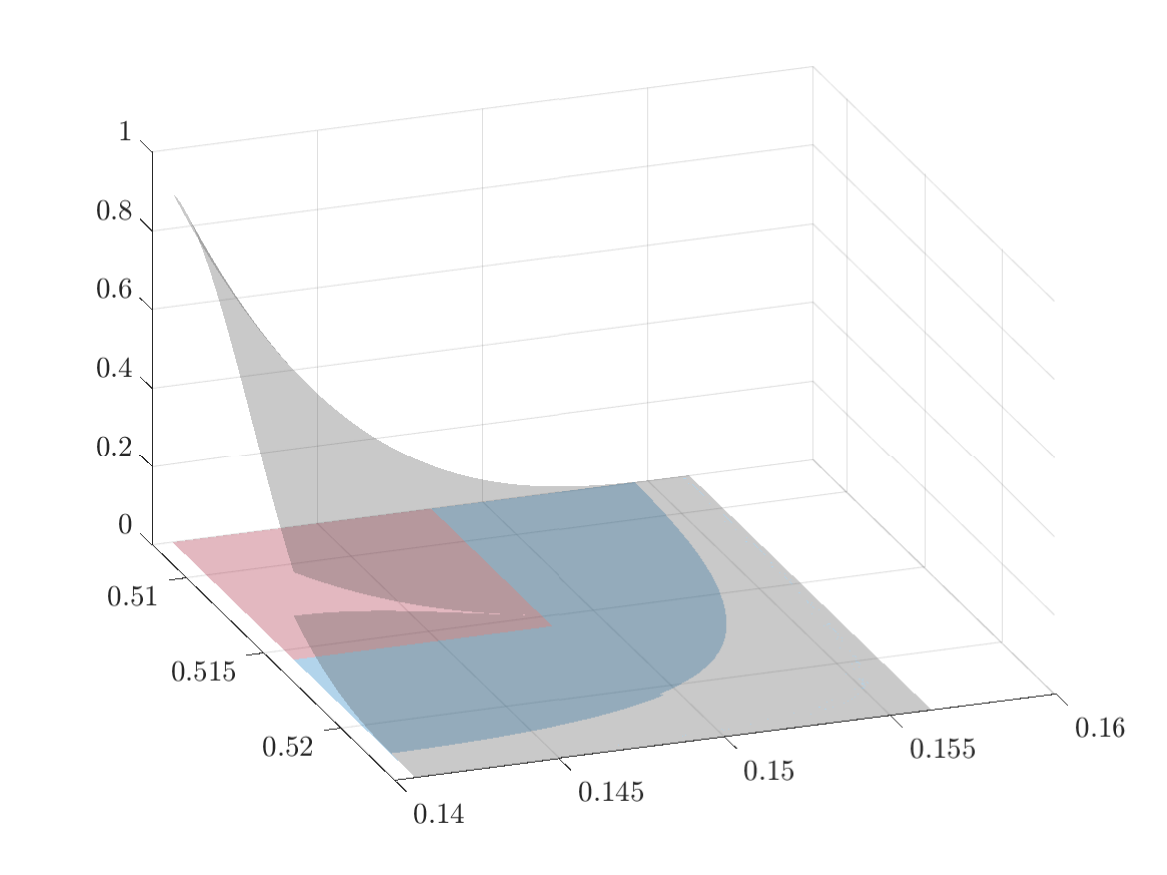}
    \caption{2D}
    \label{fig: 2D_poly_ext}
     \end{subfigure}
     \hfill
    \caption{Polynomial extension of cubic $C^2$ B-splines}
    \label{fig: poly_ext}
\end{figure}

\begin{remark}
Contrary to the consistent mass for the standard B-spline basis, its stabilized counterpart may feature negative entries and choosing an appropriate lumping strategy is less obvious. In \cite{voet2023mathematical}, the authors proved that the absolute row-sum guaranteed an improvement of the CFL, but its effect on the accuracy is unclear. Fortunately, since the extension only takes place on small cut elements from neighboring elements, negative entries are often very small in magnitude and both the standard and absolute row-sum techniques practically produced identical results. Thus, we consistently used the standard row-sum technique \eqref{eq: row_sum} in all our experiments. However, this is generally not a safe practice when stabilizing relatively large trimmed elements (i.e. choosing a large value of $\gamma$).
\end{remark}

\section{Numerical examples}
\label{se: experiments}
Combining mass lumping with stabilization may vastly improve its accuracy without compromising its CFL. As a matter of fact, stabilization restores a level of accuracy comparable to boundary-fitted discretizations. We provide strong numerical evidence for this claim by successfully treating the failed examples of \Cref{se: motivation}. In all our examples, we form the stabilized stiffness and mass matrices by employing \Cref{algo: assembly_stiffness} with $\gamma=0.1$, ensuring large elements are at least $10\%$ within the physical domain. The mass matrix is then lumped with the standard row-sum technique \eqref{eq: row_sum} or any of its generalizations \cite{voet2023mathematical,voet2024mass} if need be.

\begin{example}
\label{ex: counter_example_1D_poly_ext}
Let us come back to \Cref{ex: counter_example_1D}, which we now solve with the stabilization technique. The problem data and discretization parameters remain unchanged. \Cref{fig: 1D_Laplace_trimming_normalized_spectrum_eps_1e-6_poly_ext_buffa} shows the normalized spectrum for the stabilized consistent and lumped mass approximations. The stabilization technique combined with the lumped mass successfully removes spurious eigenvalues from the low-frequency spectrum without increasing the outlier eigenvalues. As a matter of fact, the figure is nearly identical to \Cref{fig: 1D_Laplace_trimming_normalized_spectrum_eps_1e-6_relabeled} (except for $p=1$) and is reminiscent of boundary-fitted discretizations (see e.g. \cite{cottrell2006isogeometric}). Additionally, the stabilization preserves the accuracy of the consistent mass while it removes the dependency of the largest eigenvalues on the size of the trimmed element. The latter now actually behave similarly to standard outlier eigenvalues for boundary-fitted discretizations. This observation is another clear indication that small spurious eigenvalues for the lumped mass have exactly the same origin as large spurious eigenvalues for the consistent mass. The stabilization is a solution to both issues and in particular prevents spurious high-frequency modes from moving towards the low-frequency range when the mass is lumped, as shown in \Cref{fig: 1D_Laplace_trimming_dynamics_p3_eps_1e-6_eigenfunctions_poly_ext_buffa}.

\begin{figure}[H]
    \centering
    \includegraphics[width=0.5\linewidth]{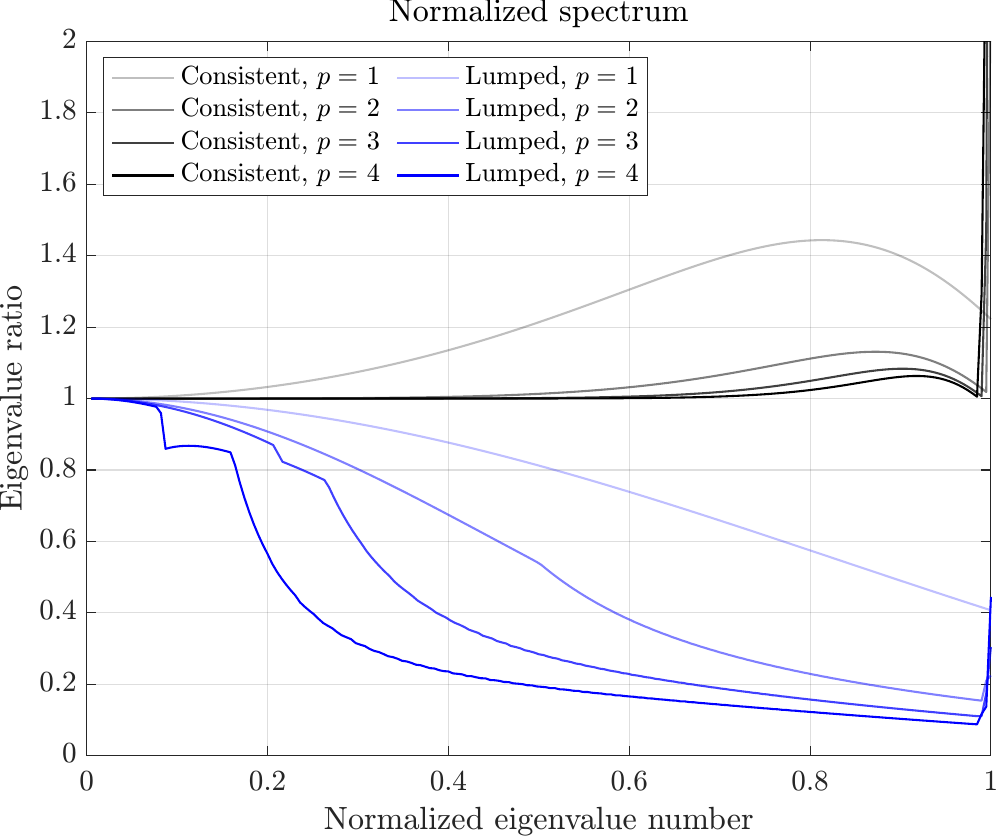}
    \caption{Ratio of approximate over exact eigenvalues for stabilized consistent and lumped mass approximations}
    \label{fig: 1D_Laplace_trimming_normalized_spectrum_eps_1e-6_poly_ext_buffa}
\end{figure}

\begin{figure}[H]
    \centering
    \includegraphics[width=\linewidth]{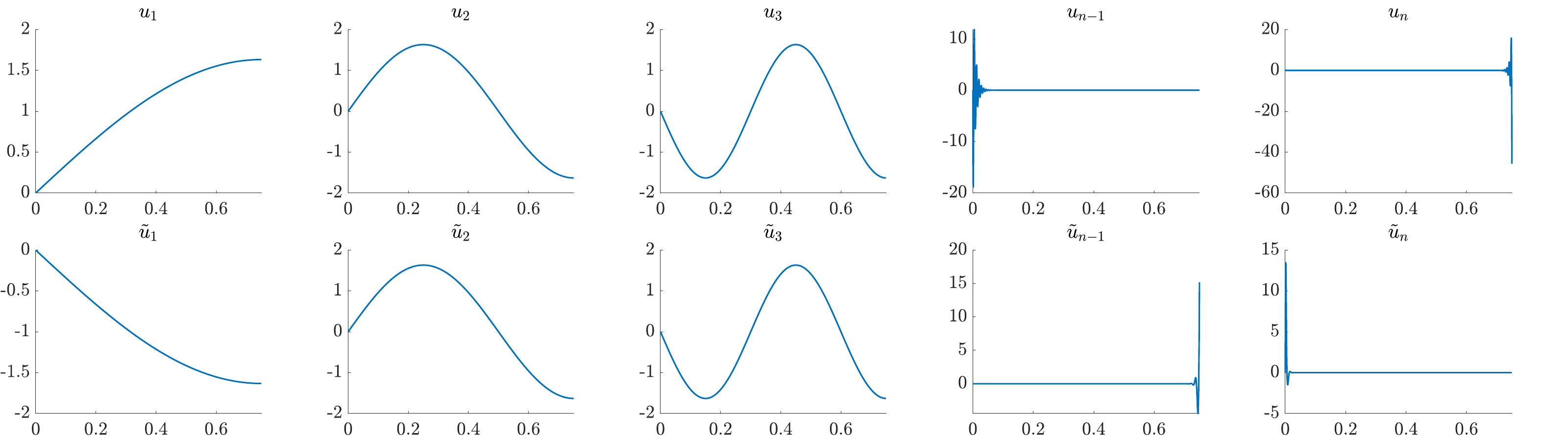}
    \caption{Three first and two last eigenfunctions of $(\tilde{K},\tilde{M})$ (top row) and $(\tilde{K},\mathcal{L}(\tilde{M}))$ (bottom row) for $p=3$. The labeling of eigenfunctions matches the labeling of eigenvalues.}
    \label{fig: 1D_Laplace_trimming_dynamics_p3_eps_1e-6_eigenfunctions_poly_ext_buffa}
\end{figure}

We now solve the same initial boundary-value problem as in \Cref{ex: counter_example_1D} with the same manufactured solution. As shown in \Cref{fig: 1D_Laplace_trimming_dynamics_solution_snapshots_p3_eps_1e-6_Cp_1_explicit_poly_ext_buffa,fig: 1D_Laplace_trimming_dynamics_solution_snapshots_p3_eps_1e-6_C0_explicit_poly_ext_buffa}, the stabilization cures both the $C^2$ and $C^0$ discretizations by completely removing the spurious oscillations from \Cref{fig: 1D_Laplace_trimming_dynamics_solution_snapshots_p3_eps_1e-6_Cp_1_explicit,fig: 1D_Laplace_trimming_dynamics_solution_snapshots_p3_eps_1e-6_C0_explicit}. The solution for the lumped mass now behaves similarly to the consistent mass and the $L^2$ error in \Cref{fig: 1D_Laplace_trimming_dynamics_L2_error_eps_1e-6_explicit_poly_ext_buffa} nearly overlaps.

\begin{figure}[H]
    \centering
    \includegraphics[width=\linewidth]{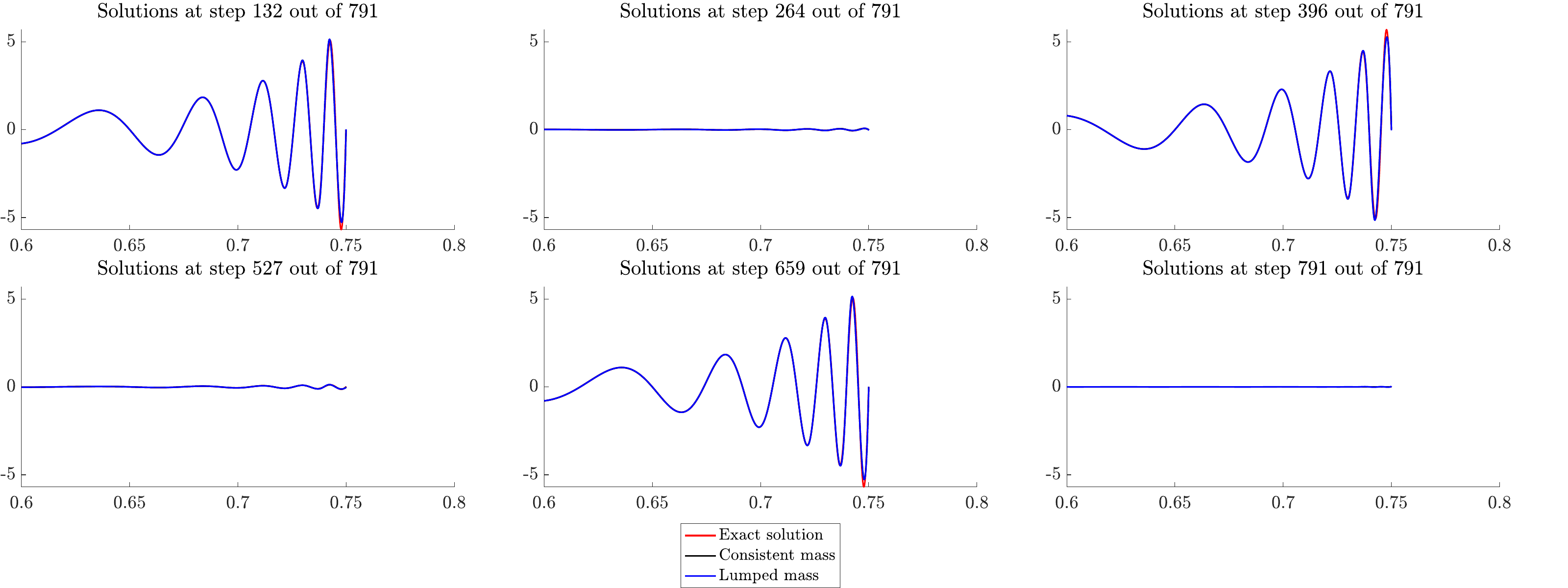}
    \caption{Exact solution and stabilized discrete solutions for the consistent and lumped mass for cubic $C^2$ B-splines}
    \label{fig: 1D_Laplace_trimming_dynamics_solution_snapshots_p3_eps_1e-6_Cp_1_explicit_poly_ext_buffa}
\end{figure}

\begin{figure}[H]
    \centering
    \includegraphics[width=\linewidth]{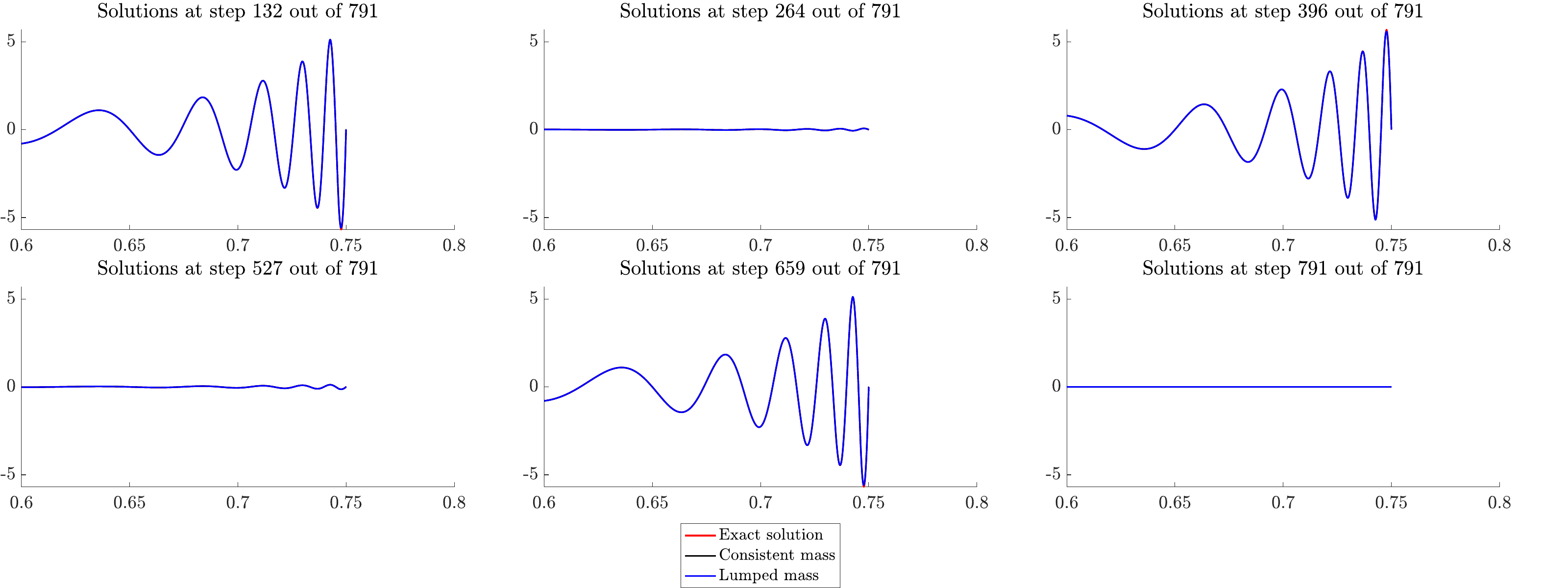}
    \caption{Exact solution and stabilized discrete solutions for the consistent and lumped mass for cubic $C^0$ B-splines}
    \label{fig: 1D_Laplace_trimming_dynamics_solution_snapshots_p3_eps_1e-6_C0_explicit_poly_ext_buffa}
\end{figure}

\begin{figure}[H]
    \centering
    \includegraphics[width=0.5\linewidth]{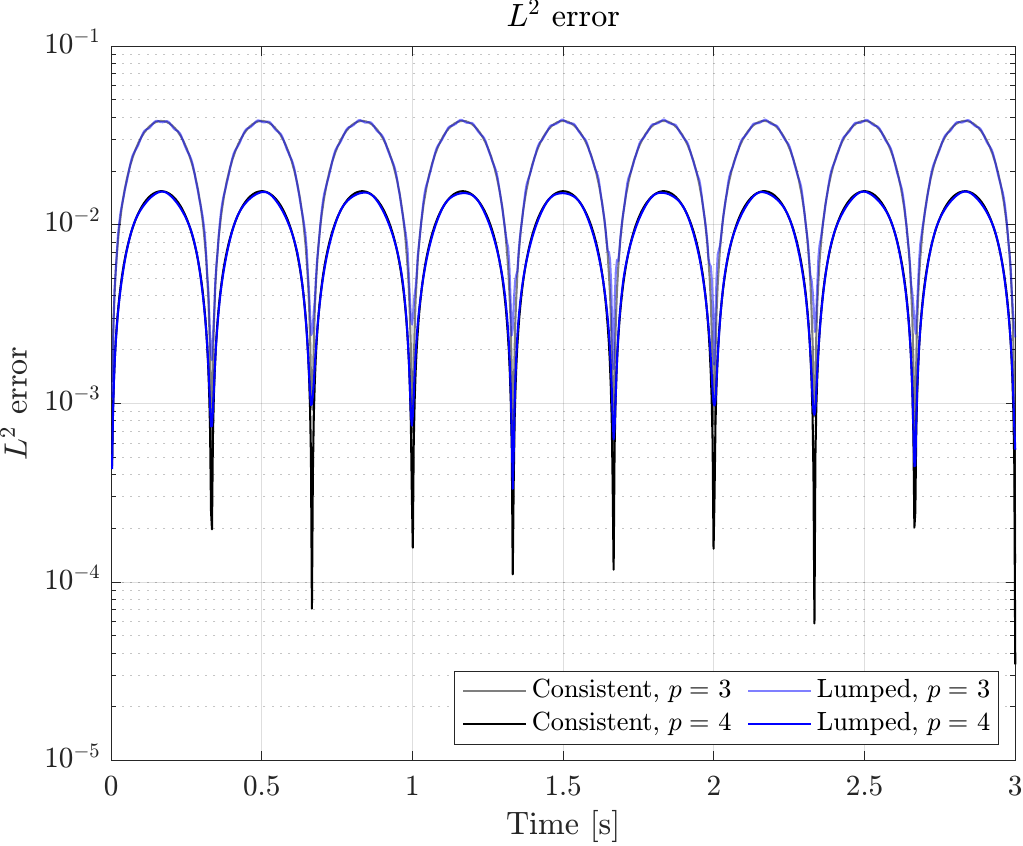}
    \caption{Evolution of the $L^2$ error over time for the stabilized consistent and lumped mass solutions for $C^{p-1}$ discretizations of degree $p=3,4$}
    \label{fig: 1D_Laplace_trimming_dynamics_L2_error_eps_1e-6_explicit_poly_ext_buffa}
\end{figure}

\end{example}

\begin{example}
\label{ex: counter_example_2D_poly_ext}
We now return to \Cref{ex: counter_example_rotated_square} for the rotated and shifted square. There again, the solution for the lumped mass behaved very poorly, especially in the corners. We now employ the stabilization technique of \Cref{se: stabilization} on this same example. Snapshots of the stabilized solutions are shown in \Cref{fig: 2D_Laplace_trimmed_rotated_square_dynamics_solution_snapshots_p3_rot_0_85_poly_ext_buffa_gamma_0_1}. Once again, spurious oscillations for the solution with the lumped mass have disappeared. Although minor discrepancies are still visible in \Cref{fig: 2D_Laplace_trimmed_rotated_square_dynamics_solution_snapshots_p3_rot_0_85_s135_poly_ext_buffa_gamma_0_1}, the magnitude of the solution is also rather small at this time and the error committed in this region of the domain barely contributes to the overall error. Indeed, the $L^2$ error for the lumped mass only slightly departs from the consistent mass in \Cref{fig: 2D_Laplace_trimmed_rotated_square_L2_error_explicit_p3_rot_0_85_poly_ext_buffa_gamma_0_1} and these slight inaccuracies are easily resolved with advanced mass lumping techniques, as discussed in \cite{voet2024mass}.

\begin{figure}[H]
     \centering
     \begin{subfigure}[t]{1.0\textwidth}
    \centering
    \includegraphics[width=\textwidth]{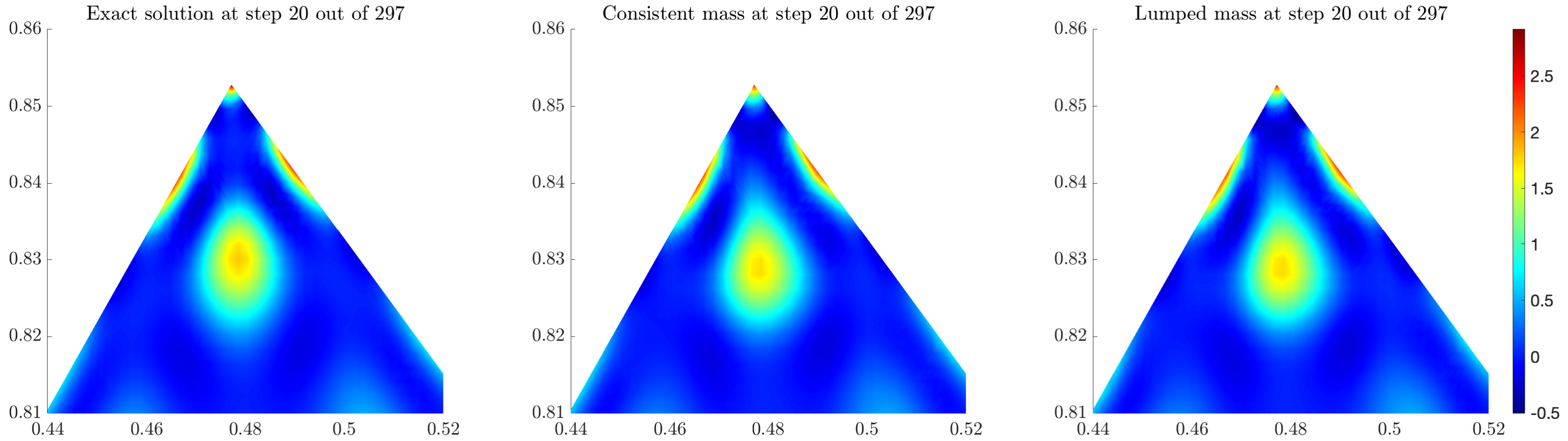}
    \caption{Solution snapshots at time $t=0.1926$}
    \label{fig: 2D_Laplace_trimmed_rotated_square_dynamics_solution_snapshots_p3_rot_0_85_s20_poly_ext_buffa_gamma_0_1}
     \end{subfigure}
     \hfill
     \begin{subfigure}[t]{1.0\textwidth}
    \centering
    \includegraphics[width=\textwidth]{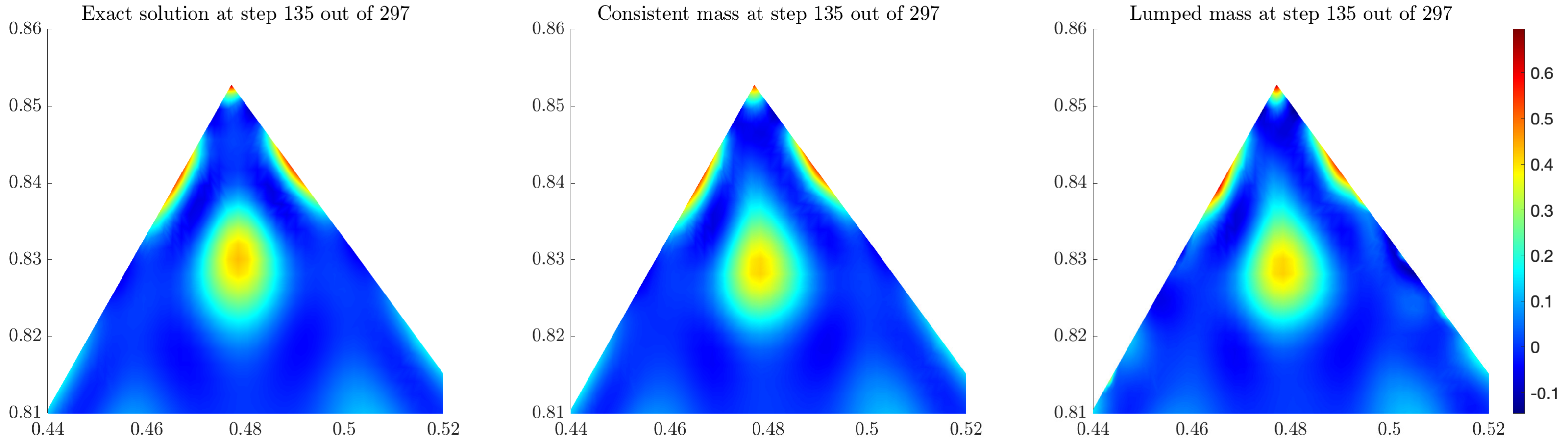}
    \caption{Solution snapshots at time $t=1.3581$}
    \label{fig: 2D_Laplace_trimmed_rotated_square_dynamics_solution_snapshots_p3_rot_0_85_s135_poly_ext_buffa_gamma_0_1}
     \end{subfigure}
     \hfill
    \caption{Exact solution and stabilized discrete solutions for the consistent and lumped mass for $p=3$}
    \label{fig: 2D_Laplace_trimmed_rotated_square_dynamics_solution_snapshots_p3_rot_0_85_poly_ext_buffa_gamma_0_1}
\end{figure}

\begin{figure}[H]
    \centering
    \includegraphics[width=0.5\linewidth]{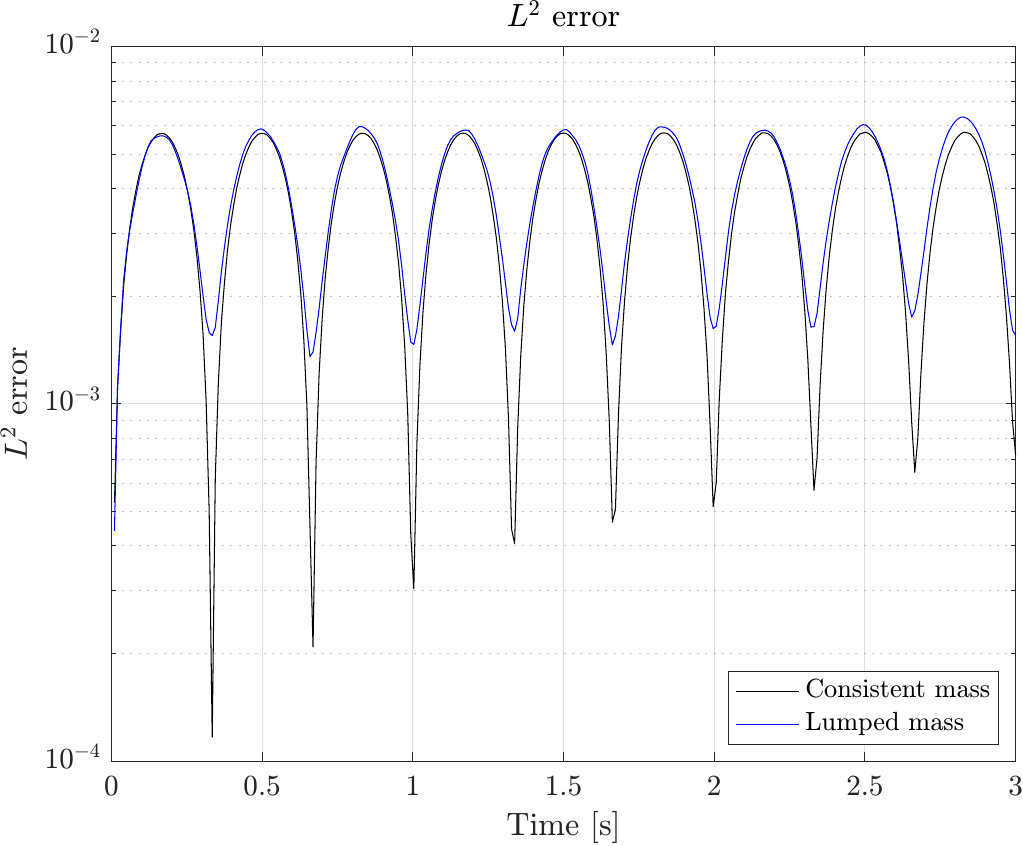}
    \caption{Evolution of the $L^2$ error over time for the stabilized consistent and lumped mass solutions}
    \label{fig: 2D_Laplace_trimmed_rotated_square_L2_error_explicit_p3_rot_0_85_poly_ext_buffa_gamma_0_1}
\end{figure}

\end{example}

\begin{example}
Let us now come back to \Cref{ex: counter_example_plate_with_extrusion}, which we solve with the stabilization technique presented in \Cref{se: stabilization}. All other parameters remain unchanged. Snapshots of the stabilized solution are shown in \Cref{fig: 2D_Laplace_trimmed_plate_with_extrusion_dynamics_solution_snapshots_p2_eps_1e-7_poly_ext_buffa_gamma_0_1} and reveal that the solution for the lumped mass now behaves similarly to the consistent mass. As testified in \Cref{fig: 2D_Laplace_trimmed_plate_with_extrusion_L2_error_explicit_p2_eps_1e-7_poly_ext_buffa_gamma_0_1}, high-frequency oscillations have also been eliminated from \Cref{fig: 2D_Laplace_trimmed_plate_with_extrusion_L2_error_explicit_p2_eps_1e-7}.

\begin{figure}[H]
     \centering
     \begin{subfigure}[t]{1.0\textwidth}
    \centering
    \includegraphics[width=\textwidth]{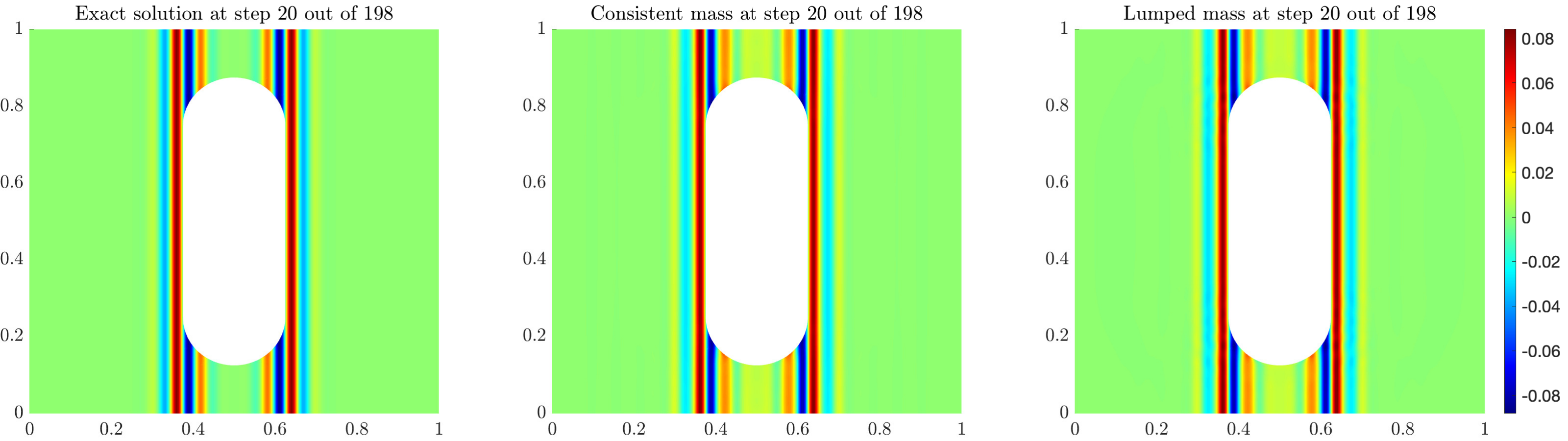}
    \caption{Solution snapshots at time $t=0.1926$}
    \label{fig: 2D_Laplace_trimmed_plate_with_extrusion_dynamics_solution_snapshots_p2_eps_1e-7_s20_poly_ext_buffa_gamma_0_1}
     \end{subfigure}
     \hfill
     \begin{subfigure}[t]{1.0\textwidth}
    \centering
    \includegraphics[width=\textwidth]{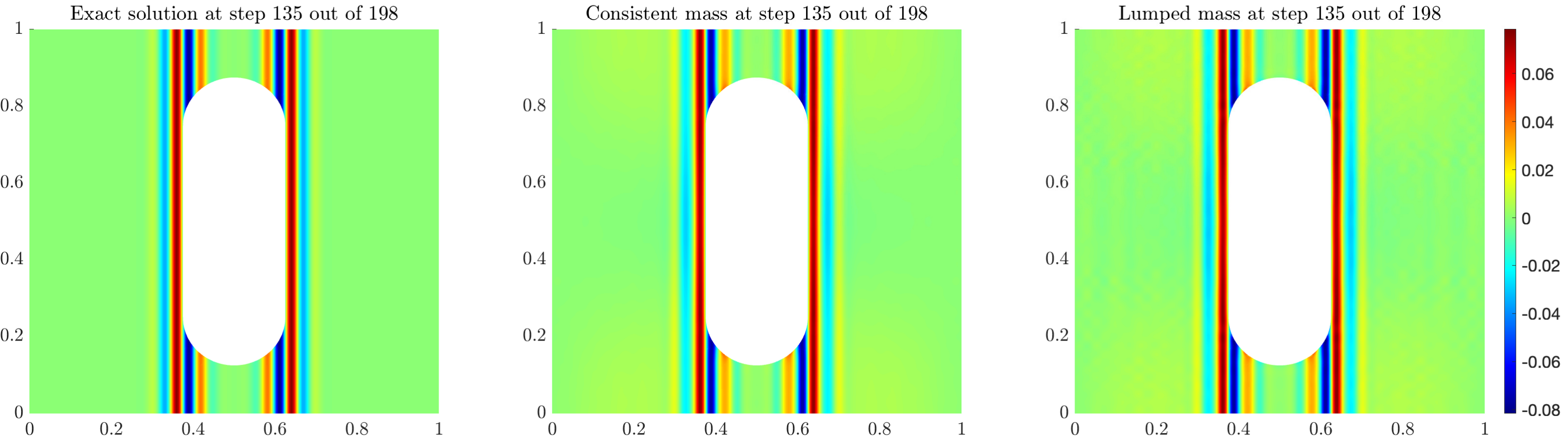}
    \caption{Solution snapshots at time $t=1.3581$}
    \label{fig: 2D_Laplace_trimmed_plate_with_extrusion_dynamics_solution_snapshots_p2_eps_1e-7_s135_poly_ext_buffa_gamma_0_1}
     \end{subfigure}
     \hfill
    \caption{Exact solution and stabilized discrete solutions for the consistent and lumped mass for quadratic $C^1$ B-splines}
    \label{fig: 2D_Laplace_trimmed_plate_with_extrusion_dynamics_solution_snapshots_p2_eps_1e-7_poly_ext_buffa_gamma_0_1}
\end{figure}

\begin{figure}[H]
    \centering
    \includegraphics[width=0.5\linewidth]{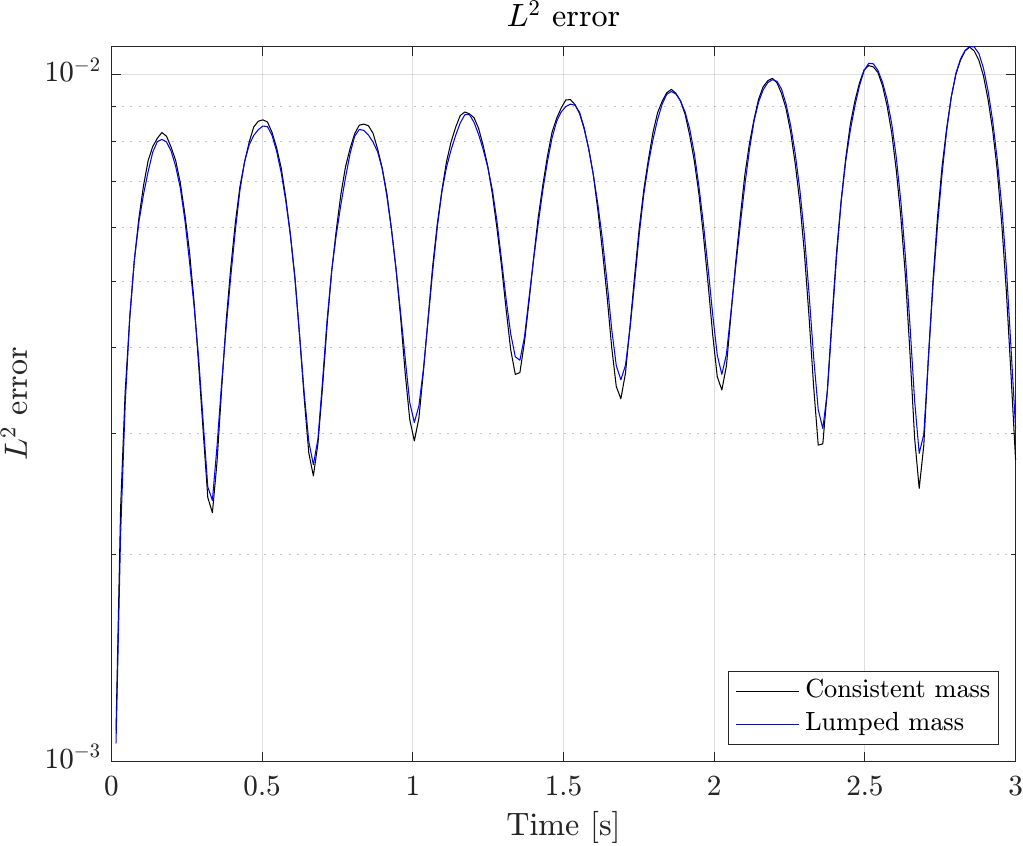}
    \caption{Evolution of the $L^2$ error over time for the stabilized consistent and lumped mass solutions}
    \label{fig: 2D_Laplace_trimmed_plate_with_extrusion_L2_error_explicit_p2_eps_1e-7_poly_ext_buffa_gamma_0_1}
\end{figure}

\end{example}

\begin{example}
Finally, we solve \Cref{ex: counter_example_plate_with_hole} with the same stabilization technique. Contrary to the previous examples, the row-sum technique is still rather inaccurate in this case, even with the stabilization. This is not surprising given that the row-sum is only second order accurate with respect to the mesh size and numerical evidence indicates that the constant in the convergence rate scales poorly with the spline order \cite{cottrell2006isogeometric}. The family of (block) lumped mass matrices devised in \cite{voet2023mathematical,voet2024mass} reduces this constant by increasing the (block) bandwidth of the matrix. Although it does not improve the convergence rate, it is the only improved lumping strategy we are aware of for immersogeometric analysis. According to \Cref{fig: 2D_Laplace_trimmed_plate_with_hole_dynamics_solution_snapshots_p3_eps_1e-6_poly_ext_buffa_gamma_0_1}, the block-diagonal approximation from \cite{voet2024mass} (therein referred to as $\mathcal{P}_1$) produces satisfactory results, as \Cref{fig: 2D_Laplace_trimmed_plate_with_hole_L2_error_explicit_p3_eps_1e-6_poly_ext_buffa_gamma_0_1} confirms. However, since the improved lumping strategies better approximate the stabilized consistent mass, they also tighten the constraint on the CFL condition: $1244$ time-steps were necessary for the block-diagonal approximation instead of $304$ for the row-sum.

\begin{figure}[H]
     \centering
     \begin{subfigure}[t]{1.0\textwidth}
    \centering
    \includegraphics[width=\textwidth]{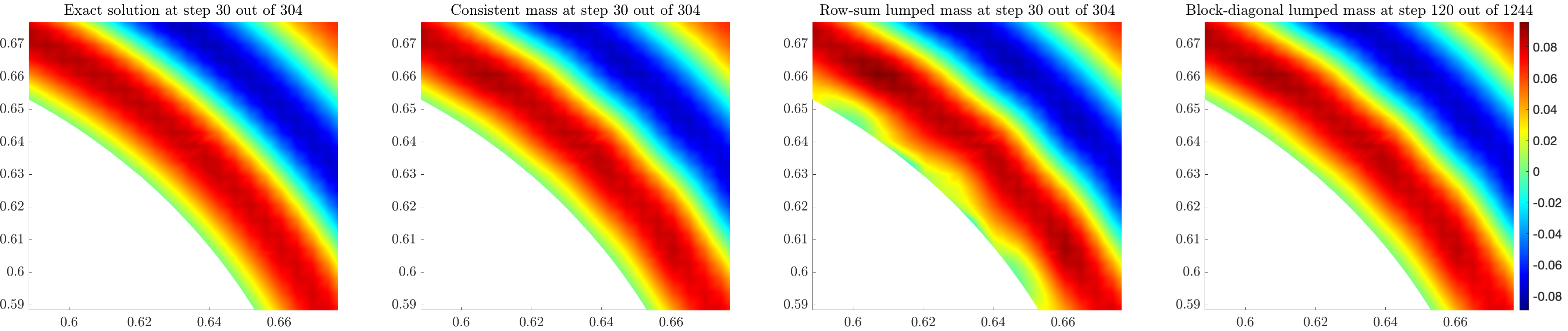}
    \caption{Solution snapshots at time $t=0.2871$}
    \label{fig: 2D_Laplace_trimmed_plate_with_hole_dynamics_solution_snapshots_p3_eps_1e-6_s30_poly_ext_buffa_gamma_0_1}
     \end{subfigure}
     \hfill
     \begin{subfigure}[t]{1.0\textwidth}
    \centering
    \includegraphics[width=\textwidth]{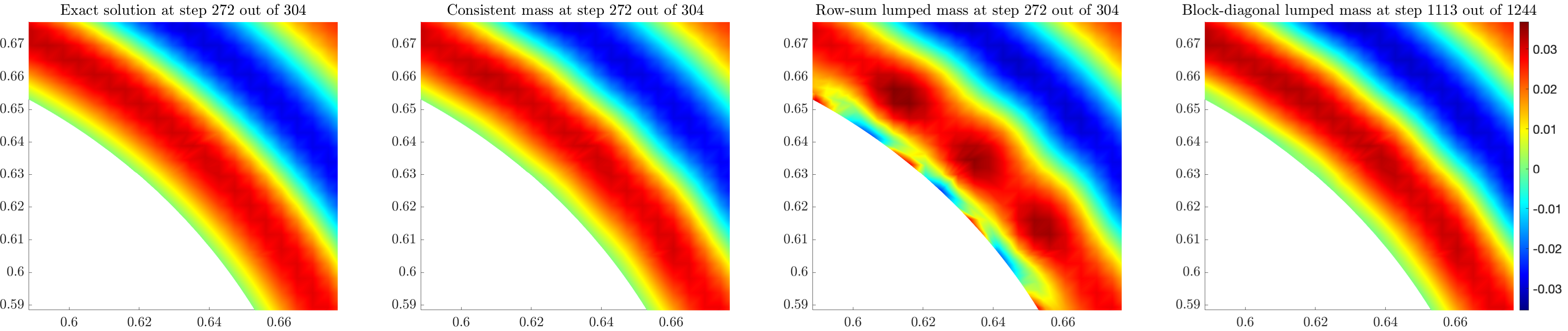}
    \caption{Solution snapshots at time $t=2.6832$}
    \label{fig: 2D_Laplace_trimmed_plate_with_hole_dynamics_solution_snapshots_p3_eps_1e-6_s272_poly_ext_buffa_gamma_0_1}
     \end{subfigure}
     \hfill
    \caption{Exact solution and stabilized discrete solutions for the consistent and lumped mass for cubic $C^2$ B-splines}
    \label{fig: 2D_Laplace_trimmed_plate_with_hole_dynamics_solution_snapshots_p3_eps_1e-6_poly_ext_buffa_gamma_0_1}
\end{figure}

\begin{figure}[H]
    \centering
    \includegraphics[width=0.5\linewidth]{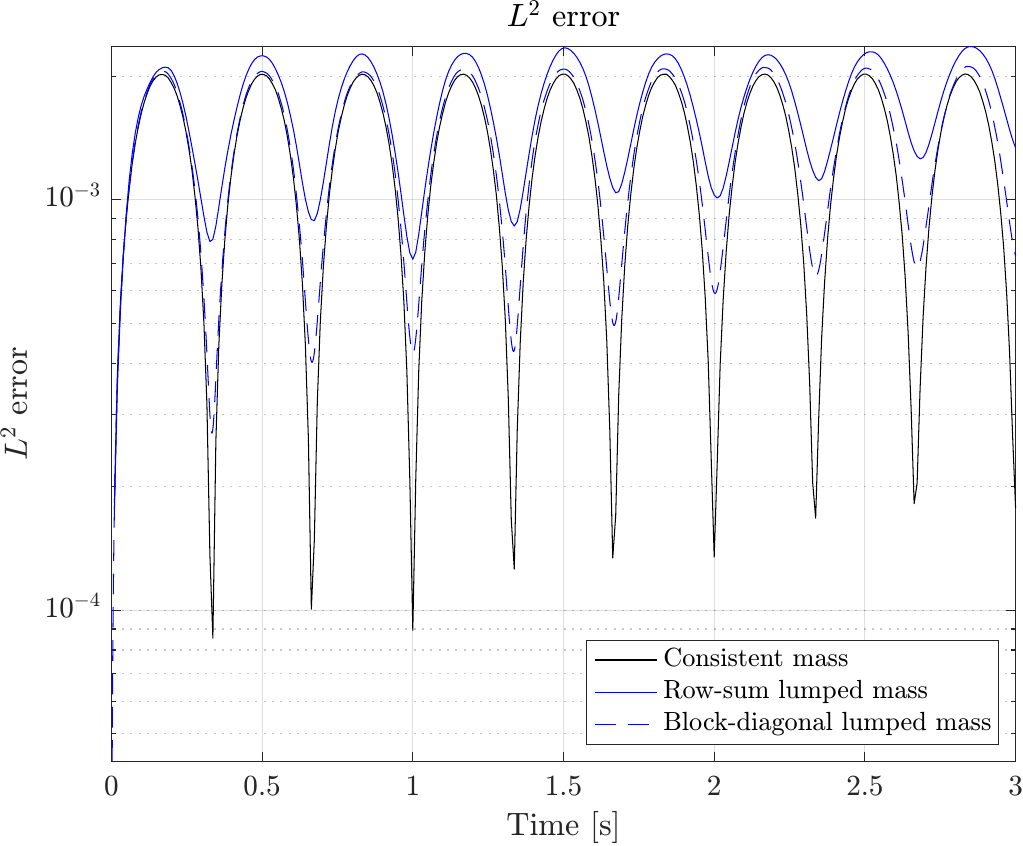}
    \caption{Evolution of the $L^2$ error over time for the stabilized consistent and lumped mass solutions}
    \label{fig: 2D_Laplace_trimmed_plate_with_hole_L2_error_explicit_p3_eps_1e-6_poly_ext_buffa_gamma_0_1}
\end{figure}    
\end{example}

These experiments confirm that the stabilization removes the adverse effect of trimming on the eigenvalues. However, as shown in the previous example, spurious eigenmodes may appear in the low-frequency spectrum of the lumped mass for a very different reason. Most importantly, for the row-sum technique, increasing the spline order also brings in spurious eigenmodes and it appears badly trimmed elements only accelerate this effect. While the stabilization takes care of the trimming, it cannot resolve the issue for high order splines on its own. Although (block) generalizations of the row-sum technique \cite{voet2023mathematical,voet2024mass} might alleviate the issue, they are only second order accurate and remain ill-suited for high order splines. If sufficiently general high order lumping techniques become available for trimmed geometries, we could foresee combining them with the stabilization technique presented herein. Alternatively, combining the stabilization technique with outlier-free spaces on the background mesh \cite{sande2019sharp,manni2022application,hiemstra2021removal} might also mitigate the issue.

\section{Conclusion}
\label{se: conclusion}
Mass lumping has long been praised for removing the dependency of the critical time step on the size of trimmed elements for smooth isogeometric discretizations. This fact, sometimes considered a blessing of smoothness, has unfortunately overshadowed a much more subtle effect: although the largest eigenvalues remain bounded, the smallest ones may become arbitrarily small and bring in spurious modes in the low-frequency spectrum. In this article, we have shown how those spurious modes may trigger oscillations in the solution when they are activated. We have then shown how to prevent such oscillations by stabilizing the discrete formulation prior to mass lumping. This stabilization, both applicable to smooth isogeometric discretizations and standard $C^0$ finite elements, restores a level of accuracy and CFL condition comparable to a boundary-fitted discretization. More generally, since spurious high frequencies for the consistent mass and spurious low frequencies for the lumped mass have exactly the same origin, any solution for the former is also a solution for the latter. Thus, we expect alternative techniques based on extended B-splines to yield similar results.

\section*{Declaration of competing interest}
The authors declare that they have no known competing financial interests or personal relationships that could have appeared to influence the work reported in this paper.

\section*{Acknowledgments}
The second author kindly acknowledges the support of the SNSF through the project ``Smoothness in Higher Order Numerical Methods’’ n. TMPFP2\_209868 (SNSF Swiss Postdoctoral Fellowship 2021). 
\newline The third author kindly acknowledges the support of SNSF through the project ``PDE tools for analysis-aware geometry processing in simulation science’’ n. 200021\_215099.

\appendix
\section{Error analysis}
\label{se: error_analysis}
This section refines and simplifies the error analysis first carried out in \cite{hughes2014finite} for boundary-value and initial-value problems. Although the derivation essentially follows the same arguments, repeated triangle inequalities lead to simpler bounds, much easier to interpret. For completeness, we will not only cover the hyperbolic case, but also the elliptic one.

\subsection{Elliptic boundary-value problems}
The weak form of an elliptic boundary-value problem takes a generic form: find $u \colon \Omega \to \mathbb{R}$ such that
\begin{equation}
    a(u,v)=(f,v)_{L^2} \quad \forall v \in V \label{eq: weak_form_elliptic_bv}
\end{equation}
for a symmetric continuous and coercive bilinear form $a \colon V \times V \to \mathbb{R}$ and $f \in L^2(\Omega)$ with $\Omega$ an open connected domain with Lipschitz boundary and $V$ a suitable Sobolev space that depends on the order of the differential equation. We may construct a basis for $V$ by solving the associated generalized eigenvalue problem
\begin{equation}
    a(u,v)=\lambda b(u,v) \quad \forall v \in V \label{eq: gen_eig_problem}
\end{equation}
with $b(u,v)=(u,v)_{L^2}$. Let $\{(\lambda_i, u_i)\}_{i=1}^{\infty}$ denote the eigenvalue/eigenfunction pairs solution of \eqref{eq: gen_eig_problem}. Under the aforementioned properties, all eigenvalues are positive and we may assume they are ordered in increasing algebraic order such that $0<\lambda_1 \leq \lambda_2 \leq \dots$. Moreover, the (normalized) eigenfunctions form an $L^2$-orthonormal basis for $V$ such that
\begin{equation*}
    a(u_i, u_j)=\lambda_i \delta_{ij}, \qquad b(u_i,u_j)=\delta_{ij}.
\end{equation*}
Expanding the solution $u$ in the eigenfunction basis $u(\mathbf{x})=\sum_{j=1}^{\infty} d_j u_j(\mathbf{x})$, substituting it in \eqref{eq: weak_form_elliptic_bv}, testing for $v=u_i$ and using the orthogonality relations, we obtain 
\begin{equation*}
    \lambda_i d_i = (f,u_i) \implies d_i =\frac{f_i}{\lambda_i}
\end{equation*}
where we defined $f_i=(f,u_i)_{L^2}$, the $i$th coefficient for the $L^2$ projection of $f$ in the eigenbasis. Finally, we obtain
\begin{equation*}
    u(\mathbf{x})=\sum_{j=1}^{\infty} \frac{f_j}{\lambda_j} u_j(\mathbf{x}).
\end{equation*}

For a conforming Galerkin discretization, one seeks an approximate solution $u^h$ in a finite dimensional subspace $V^h \subset V$. The procedure outlined above still holds in a finite dimensional setting and we obtain
\begin{equation*}
    u^h(\mathbf{x})=\sum_{j=1}^{n} \frac{f^h_j}{\lambda^h_j} u^h_j(\mathbf{x}).
\end{equation*}
where $n = \dim(V^h)$ and all discrete quantities bearing an $h$ superscript are defined analogously to their continuous counterpart. Now our goal is to quantify the error
\begin{equation*}
    u^h(\mathbf{x})-u(\mathbf{x})=\sum_{j=1}^{n} d^h_j u^h_j(\mathbf{x})-\sum_{j=1}^{\infty} d_j u_j(\mathbf{x})=e(\mathbf{x}) - \eta(\mathbf{x})
\end{equation*}
where 
\begin{equation*}
    e(\mathbf{x})= \sum_{j=1}^n e_j(\mathbf{x}) \quad \text{and} \quad \eta(\mathbf{x})=\sum_{j=n+1}^{\infty}d_ju_j(\mathbf{x})
\end{equation*}
and $e_j(\mathbf{x})=d^h_j u^h_j(\mathbf{x})- d_j u_j(\mathbf{x})$ denotes the $j$th modal error. The Galerkin approximation leaves an unresolved error component $\eta(\mathbf{x})$ and nothing can be done about it. For the resolved component $e(\mathbf{x})$, the triangle inequality yields
\begin{equation*}
    \|e\|_{L^2} \leq \sum_{j=1}^n\|e_j\|_{L^2}
\end{equation*}
and $\|e_j\|_{L^2}$ is bounded by the eigenvalue and eigenfunction error, as shown in the next theorem.

\begin{theorem}
\label{th: modal_err_elliptic}
The $L^2$ norm of the modal error is bounded by
\begin{equation}
\label{eq: modal_err_elliptic}
    \|e_j\|_{L^2} \leq \frac{\|f\|_{L^2}}{\lambda_j}\left(\frac{\lambda^h_j-\lambda_j}{\lambda_j}+2\|u^h_j-u_j\|_{L^2}\right).
\end{equation}
\end{theorem}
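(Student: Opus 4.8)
The plan is to work directly from the closed-form modal coefficients $d_j=f_j/\lambda_j$ and $d_j^h=f_j^h/\lambda_j^h$ derived just above the statement, so that the modal error reads $e_j=\frac{f_j^h}{\lambda_j^h}u_j^h-\frac{f_j}{\lambda_j}u_j$, and then to peel off the eigenfunction error from the coefficient error by repeated add-and-subtract steps, exactly in the spirit of the hyperbolic argument. First I would add and subtract $\frac{f_j^h}{\lambda_j^h}u_j$, which splits $e_j$ into $\frac{f_j^h}{\lambda_j^h}(u_j^h-u_j)$ plus $\bigl(\frac{f_j^h}{\lambda_j^h}-\frac{f_j}{\lambda_j}\bigr)u_j$. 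Taking $L^2$ norms and using the normalization $\|u_j\|_{L^2}=1$ reduces everything to bounding the scalar coefficient difference $\bigl|\frac{f_j^h}{\lambda_j^h}-\frac{f_j}{\lambda_j}\bigr|$ together with the prefactor $|f_j^h|/\lambda_j^h$ multiplying the eigenfunction error.

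To control the coefficient difference I would perform a second add-and-subtract, this time of $f_j/\lambda_j^h$, writing $\frac{f_j^h}{\lambda_j^h}-\frac{f_j}{\lambda_j}=\frac{f_j^h-f_j}{\lambda_j^h}+f_j\bigl(\frac{1}{\lambda_j^h}-\frac{1}{\lambda_j}\bigr)$. This cleanly separates the forcing-projection error from the eigenvalue error: the first summand is governed by $f_j^h-f_j=(f,u_j^h-u_j)_{L^2}$, while the second carries the factor $\frac{1}{\lambda_j^h}-\frac{1}{\lambda_j}=-\frac{\lambda_j^h-\lambda_j}{\lambda_j\lambda_j^h}$.

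The remaining steps are elementary estimates. By Cauchy–Schwarz and the normalization of the exact and discrete eigenfunctions, $|f_j|\le\|f\|_{L^2}$, $|f_j^h|\le\|f\|_{L^2}$ and $|f_j^h-f_j|=|(f,u_j^h-u_j)_{L^2}|\le\|f\|_{L^2}\,\|u_j^h-u_j\|_{L^2}$. Collecting the three contributions—the prefactor term, the forcing term and the eigenvalue term—produces two copies of $\frac{\|f\|_{L^2}}{\lambda_j^h}\|u_j^h-u_j\|_{L^2}$ and one copy of $\|f\|_{L^2}\frac{\lambda_j^h-\lambda_j}{\lambda_j\lambda_j^h}$, which already has the shape of the claimed bound except that $\lambda_j^h$ still appears in the denominators in place of $\lambda_j$.

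The one ingredient that must be invoked, and the only genuine subtlety, is the spectral ordering $\lambda_j^h\ge\lambda_j$ for all $j\le n$, which holds for conforming Galerkin discretizations by the Courant–Fischer min–max principle. It both guarantees $\lambda_j^h-\lambda_j\ge 0$ (so the absolute value on the eigenvalue difference may be dropped) and lets me replace every $1/\lambda_j^h$ by the larger $1/\lambda_j$, upgrading all denominators to $\lambda_j$ and yielding the stated factor $2$ on the eigenfunction error together with the relative eigenvalue term $\frac{\lambda_j^h-\lambda_j}{\lambda_j}$. I expect this ordering property—rather than any of the estimates—to be the crux; everything else is bookkeeping with the triangle inequality. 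The same skeleton transfers essentially verbatim to the hyperbolic modal error of \Cref{th: modal_error_hyperbolic}, the only difference being that the scalar coefficient there additionally carries trigonometric factors such as $\cos(\omega_j t)$ and $\sin(\omega_j t)$, whose differences are estimated in separate add-and-subtract steps.
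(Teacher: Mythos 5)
Your proposal is correct and takes essentially the same route as the paper's proof: both decompose $e_j$ by add-and-subtract into an eigenvalue-difference term, a forcing-coefficient term $f_j^h-f_j=(f,u_j^h-u_j)_{L^2}$, and an eigenfunction-difference term, then conclude via the triangle and Cauchy--Schwarz inequalities, the normalizations $|f_j|,|f_j^h|\le\|f\|_{L^2}$, and the conforming-approximation ordering $\lambda_j\le\lambda_j^h$ to upgrade the denominators to $\lambda_j$. The only cosmetic difference is the choice of intermediate anchors in the telescoping (you attach $f_j^h/\lambda_j^h$ to the eigenfunction error and $f_j u_j$ to the eigenvalue error, while the paper attaches $f_j/\lambda_j$ and $f_j^h u_j^h$, respectively), which changes nothing of substance.
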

\begin{proof}
After expressing the modal error as
\begin{equation*}
    e_j=\frac{f^h_j}{\lambda^h_j} u^h_j-\frac{f_j}{\lambda_j} u_j=\left(\frac{1}{\lambda^h_j}-\frac{1}{\lambda_j}\right)f^h_ju^h_j +(f^h_j-f_j)\frac{u^h_j}{\lambda_j}+(u^h_j-u_j)\frac{f_j}{\lambda_j}
\end{equation*}
and recalling that $\lambda_i \leq \lambda^h_i$ holds for a conforming approximation (see for instance \cite{ern2004theory}), \cref{eq: modal_err_elliptic} immediately follows from the triangle and Cauchy-Schwarz inequalities, combined with the fact that
\begin{align*}
    \left|\frac{1}{\lambda^h_j}-\frac{1}{\lambda_j}\right|&=\frac{\lambda^h_j-\lambda_j}{\lambda^h_j \lambda_j} \leq \frac{1}{\lambda_j}\frac{\lambda^h_j-\lambda_j}{\lambda_j}, &
    \left|f_j^h-f_j\right|&=\left|(f, u^h_j-u_j)_{L^2}\right| \leq \|f\|_{L^2}\|u^h_j-u_j\|_{L^2},
\end{align*}
and $|f_i| \leq \|f\|_{L^2}$ and $|f^h_i| \leq \|f\|_{L^2}$ hold thanks to the normalization of the eigenfunctions.
\end{proof}

According to \Cref{th: modal_err_elliptic}, the norm of the modal error is bounded by the relative eigenvalue and eigenfunction errors. This upper bound is much simpler than the one derived in \cite{hughes2014finite} and reveals that the approximation errors committed in the higher modes do not contribute much to the overall error thanks to the eigenvalue $\lambda_j$ in the denominator. The authors in \cite{hughes2014finite} reached the same conclusion, although with greater effort. Anyway, neither bounds account for beneficial cancellation of modal errors, which is a clear limitation. It is also possible to derive error bounds in the ``energy norm'' (i.e. the norm induced by $a(u,v)$) but we will not detail them here.

\subsection{Hyperbolic initial boundary-value problems}
Let us now move to the hyperbolic case. From the discussion in \Cref{se: analysis}, we bound the modal error $e_j(\mathbf{x},t)=d^h_j(t)u^h_j(\mathbf{x})-d_j(t)u_j(\mathbf{x})$, where $d_i(t)$ and $d^h_i(t)$ are given by
\begin{equation*}
    d_i(t)=u_{i,0}\cos(\omega_i t)+\frac{v_{i,0}}{\omega_i}\sin(\omega_i t)+\frac{1}{\omega_i}\int_0^t \sin(\omega_i(t-\tau))f_i(\tau) \mathrm{d} \tau \quad i=1,2,\dots, \infty,
\end{equation*}
and
\begin{equation*}
    d^h_i(t)=u^h_{i,0}\cos(\omega^h_i t)+\frac{v^h_{i,0}}{\omega^h_i}\sin(\omega^h_i t)+\frac{1}{\omega^h_i}\int_0^t \sin(\omega^h_i(t-\tau))f^h_i(\tau) \mathrm{d} \tau \quad i=1,2,\dots, n,
\end{equation*}
respectively. We will now prove \Cref{th: modal_error_hyperbolic} recalled below.

\begin{theorem}
\label{th: modal_error_hyperbolic_appendix}
The $L^2$ norm of the modal error is bounded by
\begin{flalign}
\|e_j\|_{L^2} &\leq \|u_0\|_{L^2} \left(2\|u^h_j-u_j\|_{L^2}+|\cos(\omega^h_j t)-\cos(\omega_j t)|\right) & \label{eq: initial_cond1_hyperbolic} \\
 &+\frac{\|v_0\|_{L^2}}{\omega_j}\left(\frac{\omega^h_j-\omega_j}{\omega_j}+2\|u^h_j-u_j\|_{L^2}+|\sin(\omega^h_j t)-\sin(\omega_j t)|\right) & \label{eq: initial_cond2_hyperbolic} \\
 &+\frac{1}{\omega_j} \int_{0}^t \|f(\tau)\|_{L^2} \mathrm{d} \tau \left( \frac{\omega^h_j-\omega_j}{\omega_j}+2\|u^h_j-u_j\|_{L^2} + \max_{\tau \in [0,t]} |\sin(\omega^h_j \tau)-\sin(\omega_j \tau)| \right). & \label{eq: forcing_term_hyperbolic}
\end{flalign}
\end{theorem}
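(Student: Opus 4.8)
The plan is to exploit the linearity of the modal error in the three contributions appearing in $d_j(t)$ and $d^h_j(t)$, namely those from the initial displacement $u_0$, the initial velocity $v_0$, and the forcing $f$. Writing $e_j = e_j^{u_0} + e_j^{v_0} + e_j^{f}$ accordingly and invoking the triangle inequality, it suffices to bound each piece separately and recognize the three resulting bounds as \cref{eq: initial_cond1_hyperbolic,eq: initial_cond2_hyperbolic,eq: forcing_term_hyperbolic}. Throughout I would reuse the same elementary tools as in the elliptic \Cref{th: modal_err_elliptic}: the $L^2$-normalization $\|u_j\|_{L^2}=\|u^h_j\|_{L^2}=1$; the Cauchy--Schwarz estimates $|u_{j,0}|,|u^h_{j,0}|\le\|u_0\|_{L^2}$ and $|u^h_{j,0}-u_{j,0}|=|(u_0,u^h_j-u_j)_{L^2}|\le\|u_0\|_{L^2}\|u^h_j-u_j\|_{L^2}$ (and likewise for $v_0$, using $v_0\in L^2$ so that $v_{j,0}=(v_0,u_j)_{L^2}$); the trivial bounds $|\sin|,|\cos|\le 1$; and the monotonicity $\omega_j\le\omega^h_j$ valid for a conforming approximation.

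For the displacement piece $e_j^{u_0}=u^h_{j,0}\cos(\omega^h_j t)u^h_j - u_{j,0}\cos(\omega_j t)u_j$ I would telescope by peeling off one difference at a time while keeping the normalized factor $u^h_j$ attached:
\begin{equation*}
e_j^{u_0}=(u^h_{j,0}-u_{j,0})\cos(\omega^h_j t)u^h_j + u_{j,0}\bigl(\cos(\omega^h_j t)-\cos(\omega_j t)\bigr)u^h_j + u_{j,0}\cos(\omega_j t)(u^h_j-u_j).
\end{equation*}
Taking $L^2$ norms and applying the tools above produces two copies of $\|u_0\|_{L^2}\|u^h_j-u_j\|_{L^2}$ and one copy of $\|u_0\|_{L^2}|\cos(\omega^h_j t)-\cos(\omega_j t)|$, which is precisely \cref{eq: initial_cond1_hyperbolic}. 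No frequency term $(\omega^h_j-\omega_j)/\omega_j$ arises here because the cosine carries no $1/\omega$ prefactor.

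The velocity piece adds one layer: I would first extract the prefactor difference, writing $e_j^{v_0}=v^h_{j,0}\sin(\omega^h_j t)u^h_j\bigl(\tfrac{1}{\omega^h_j}-\tfrac{1}{\omega_j}\bigr)+\tfrac{1}{\omega_j}\bigl(v^h_{j,0}\sin(\omega^h_j t)u^h_j-v_{j,0}\sin(\omega_j t)u_j\bigr)$. The first term is controlled using $\frac{1}{\omega^h_j}-\frac{1}{\omega_j}=\frac{\omega^h_j-\omega_j}{\omega^h_j\omega_j}\le\frac{1}{\omega_j}\frac{\omega^h_j-\omega_j}{\omega_j}$, contributing the $(\omega^h_j-\omega_j)/\omega_j$ term; the remaining bracket has exactly the structure of $e_j^{u_0}$ (with $\sin$ in place of $\cos$ and $v_0$ in place of $u_0$), so the same telescoping yields $2\|u^h_j-u_j\|_{L^2}+|\sin(\omega^h_j t)-\sin(\omega_j t)|$. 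Assembling these with the overall $\tfrac{1}{\omega_j}$ factor gives \cref{eq: initial_cond2_hyperbolic}.

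The forcing piece is where I expect the only real work. Setting $g_j(t)=\int_0^t\sin(\omega_j(t-\tau))f_j(\tau)\,d\tau$ and similarly $g^h_j(t)$, the term reads $\tfrac{g^h_j}{\omega^h_j}u^h_j-\tfrac{g_j}{\omega_j}u_j$, and the outer peeling proceeds as for the velocity, using $|g^h_j|\le\int_0^t|f^h_j(\tau)|\,d\tau\le\int_0^t\|f(\tau)\|_{L^2}\,d\tau$ (since $|f^h_j(\tau)|=|(f(\tau),u^h_j)_{L^2}|\le\|f(\tau)\|_{L^2}$). The delicate step is bounding $|g^h_j-g_j|$, for which I would telescope \emph{inside} the integral as
\begin{equation*}
g^h_j-g_j=\int_0^t\bigl(\sin(\omega^h_j(t-\tau))-\sin(\omega_j(t-\tau))\bigr)f^h_j(\tau)\,d\tau+\int_0^t\sin(\omega_j(t-\tau))\bigl(f^h_j(\tau)-f_j(\tau)\bigr)\,d\tau,
\end{equation*}
pulling the first integrand's sine difference out as $\max_{\tau\in[0,t]}|\sin(\omega^h_j(t-\tau))-\sin(\omega_j(t-\tau))|$ and using the substitution $s=t-\tau$ to rewrite this maximum over $[0,t]$ as $\max_{s\in[0,t]}|\sin(\omega^h_j s)-\sin(\omega_j s)|$, while bounding the second integrand via $|f^h_j(\tau)-f_j(\tau)|\le\|f(\tau)\|_{L^2}\|u^h_j-u_j\|_{L^2}$. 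Collecting these contributions, together with the $g_j(u^h_j-u_j)$ remainder from the outer telescoping, reproduces the factor $\frac{\omega^h_j-\omega_j}{\omega_j}+2\|u^h_j-u_j\|_{L^2}+\max_{\tau\in[0,t]}|\sin(\omega^h_j\tau)-\sin(\omega_j\tau)|$ of \cref{eq: forcing_term_hyperbolic}. Summing the three bounds then completes the proof.
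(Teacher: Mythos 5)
Your proof is correct and takes essentially the same route as the paper's: the same telescoping of products into single-factor differences, the same Cauchy--Schwarz and normalization estimates together with the monotonicity $\omega_j \le \omega^h_j$, reproducing the bound term by term. The only difference is bookkeeping: you split $e_j$ by source ($u_0$, $v_0$, $f$) before telescoping, whereas the paper first writes $e_j=(d^h_j-d_j)u^h_j+d_j(u^h_j-u_j)$, bounds $|d_j|$ once, and then splits $|d^h_j-d_j|$ into the three source terms $(\mathrm{I})$, $(\mathrm{II})$, $(\mathrm{III})$ --- the resulting estimates coincide exactly.
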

\begin{proof}
For bounding the norm of the modal error $e_j(\mathbf{x},t)=d^h_j(t)u^h_j(\mathbf{x})-d_j(t)u_j(\mathbf{x})$, we first note that
\begin{equation*}
    e_j=(d^h_j-d_j)u^h_j+(u^h_j-u_j)d_j.
\end{equation*}
Thus,
\begin{equation*}
    \|e_j\|_{L^2} \leq |d^h_j-d_j|+|d_j|\|u^h_j-u_j\|_{L^2}.
\end{equation*}
Then, using the fact that $|u_{i,0}| \leq \|u_0\|_{L^2}$ and $|v_{i,0}| \leq \|v_0\|_{L^2}$, we obtain straightforwardly
\begin{equation}
\label{eq: bound_dj_hyperbolic}
    |d_j| \leq \|u_0\|_{L^2} +\frac{\|v_0\|_{L^2}}{\omega_j}+\frac{1}{\omega_j} \int_{0}^t \|f(\tau)\|_{L^2} \mathrm{d} \tau.
\end{equation}
Moreover,
\begin{equation*}
    |d^h_j-d_j| \leq \underbrace{|u^h_{j,0}\cos(\omega^h_j t)-u_{j,0}\cos(\omega_j t)|}_{(\mathrm{I})}+\underbrace{\left|\frac{v^h_{j,0}}{\omega^h_j}\sin(\omega^h_j t)-\frac{v_{j,0}}{\omega_j}\sin(\omega_j t)\right|}_{(\mathrm{II})} + \underbrace{\left|\int_0^t \sin(\omega^h_j(t-\tau))\frac{f^h_j(\tau)}{\omega^h_j} -  \sin(\omega_j(t-\tau))\frac{f_j(\tau)}{\omega_j} \mathrm{d} \tau \right|}_{(\mathrm{III})}.
\end{equation*}
As for the elliptic case, the strategy for bounding each term relies on triangle inequalities. For $(\mathrm{I})$:
\begin{align*}
    |u^h_{j,0}\cos(\omega^h_j t)-u_{j,0}\cos(\omega_j t)| &=|u^h_{j,0}(\cos(\omega^h_j t)-\cos(\omega_j t))+\cos(\omega_j t)(u^h_{j,0}-u_{j,0})| \\
    &\leq \|u_0\|_{L^2}(|\cos(\omega^h_j t)-\cos(\omega_j t)|+\|u^h_j-u_j\|_{L^2}).
\end{align*}
For $(\mathrm{II})$:
\begin{align*}
    \left|\frac{v^h_{j,0}}{\omega^h_j}\sin(\omega^h_j t)-\frac{v_{j,0}}{\omega_j}\sin(\omega_j t)\right| &= \left|v^h_{j,0}\sin(\omega^h_j t)\left(\frac{1}{\omega^h_j}-\frac{1}{\omega_j}\right) + \frac{v^h_{j,0}}{\omega_j}(\sin(\omega^h_j t)-\sin(\omega_j t))+\frac{\sin(\omega_j t)}{\omega_j}(v^h_{j,0}-v_{j,0})\right| \\
    &\leq \frac{\|v_0\|_{L^2}}{\omega_j}\left(\frac{\omega^h_j-\omega_j}{\omega_j}+|\sin(\omega^h_j t)-\sin(\omega_j t)|+\|u^h_j-u_j\|_{L^2}\right).
\end{align*}
Finally, for $(\mathrm{III})$:
\begin{flalign*}
    &\left|\int_0^t \sin(\omega^h_j(t-\tau))\frac{f^h_j(\tau)}{\omega^h_j} -  \sin(\omega_j(t-\tau))\frac{f_j(\tau)}{\omega_j} \mathrm{d} \tau \right| & \\
    &=\left|\int_0^t \sin(\omega^h_j(t-\tau))f^h_j(\tau) \left(\frac{1}{\omega^h_j}-\frac{1}{\omega_j}\right) + \frac{f^h_j(\tau)}{\omega_j}(\sin(\omega^h_j(t-\tau))-\sin(\omega_j(t-\tau)))+\frac{\sin(\omega_j(t-\tau))}{\omega_j}(f^h_j(\tau)-f_j(\tau)) \mathrm{d} \tau \right| \\
    &\leq \frac{1}{\omega_j} \int_{0}^t \|f(\tau)\|_{L^2} \mathrm{d} \tau \left( \frac{\omega^h_j-\omega_j}{\omega_j}+\|u^h_j-u_j\|_{L^2}\right)+ \frac{1}{\omega_j}\int_0^t \|f(\tau)\|_{L^2} |\sin(\omega^h_j(t-\tau))-\sin(\omega_j(t-\tau))| \mathrm{d} \tau \\
    &\leq \frac{1}{\omega_j} \int_{0}^t \|f(\tau)\|_{L^2} \mathrm{d} \tau \left( \frac{\omega^h_j-\omega_j}{\omega_j}+\|u^h_j-u_j\|_{L^2} + \max_{\tau \in [0,t]} |\sin(\omega^h_j \tau)-\sin(\omega_j \tau)| \right).
\end{flalign*}
The statement of \Cref{th: modal_error_hyperbolic_appendix} now follows after combining the bounds for $(\mathrm{I})$, $(\mathrm{II})$, $(\mathrm{III})$ and \eqref{eq: bound_dj_hyperbolic}.
\end{proof}

The three contributions to the modal error are coming from the first \eqref{eq: initial_cond1_hyperbolic} and second \eqref{eq: initial_cond2_hyperbolic} initial conditions and the forcing term \eqref{eq: forcing_term_hyperbolic}. Contrary to the elliptic case, approximation errors in the higher frequencies are not dampened away in \eqref{eq: initial_cond1_hyperbolic} and instead oscillate indefinitely. Once again, the bounds are clearer than those reported in \cite{hughes2014finite}, where the contribution from the right-hand side is not considered.


\begin{thebibliography}{10}
\expandafter\ifx\csname url\endcsname\relax
  \def\url#1{\texttt{#1}}\fi
\expandafter\ifx\csname urlprefix\endcsname\relax\def\urlprefix{URL }\fi
\expandafter\ifx\csname href\endcsname\relax
  \def\href#1#2{#2} \def\path#1{#1}\fi

\bibitem{leidinger2020explicit}
L.~Leidinger, Explicit isogeometric b-rep analysis for nonlinear dynamic crash
  simulations, Ph.D. thesis, Technische Universit{\"a}t M{\"u}nchen (2020).

\bibitem{hughes2005isogeometric}
T.~J. Hughes, J.~A. Cottrell, Y.~Bazilevs, Isogeometric analysis: Cad, finite
  elements, nurbs, exact geometry and mesh refinement, Computer methods in
  applied mechanics and engineering 194~(39-41) (2005) 4135--4195.

\bibitem{cottrell2009isogeometric}
J.~A. Cottrell, T.~J. Hughes, Y.~Bazilevs, Isogeometric analysis: toward
  integration of CAD and FEA, John Wiley \& Sons, 2009.

\bibitem{bazilevs2006isogeometric}
Y.~Bazilevs, L.~Beirao~da Veiga, J.~A. Cottrell, T.~J. Hughes, G.~Sangalli,
  Isogeometric analysis: approximation, stability and error estimates for
  h-refined meshes, Mathematical Models and Methods in Applied Sciences 16~(07)
  (2006) 1031--1090.

\bibitem{bressan2019approximation}
A.~Bressan, E.~Sande, Approximation in fem, dg and iga: a theoretical
  comparison, Numerische Mathematik 143 (2019) 923--942.

\bibitem{sande2020explicit}
E.~Sande, C.~Manni, H.~Speleers, Explicit error estimates for spline
  approximation of arbitrary smoothness in isogeometric analysis, Numerische
  Mathematik 144~(4) (2020) 889--929.

\bibitem{tagliabue2014isogeometric}
A.~Tagliabue, L.~Dede, A.~Quarteroni, Isogeometric analysis and error estimates
  for high order partial differential equations in fluid dynamics, Computers \&
  Fluids 102 (2014) 277--303.

\bibitem{hsu2015dynamic}
M.-C. Hsu, D.~Kamensky, F.~Xu, J.~Kiendl, C.~Wang, M.~C. Wu, J.~Mineroff,
  A.~Reali, Y.~Bazilevs, M.~S. Sacks, Dynamic and fluid--structure interaction
  simulations of bioprosthetic heart valves using parametric design with
  t-splines and fung-type material models, Computational mechanics 55 (2015)
  1211--1225.

\bibitem{morganti2015patient}
S.~Morganti, F.~Auricchio, D.~Benson, F.~Gambarin, S.~Hartmann, T.~Hughes,
  A.~Reali, Patient-specific isogeometric structural analysis of aortic valve
  closure, Computer methods in applied mechanics and engineering 284 (2015)
  508--520.

\bibitem{lorenzo2019computer}
G.~Lorenzo, T.~J. Hughes, P.~Dominguez-Frojan, A.~Reali, H.~Gomez, Computer
  simulations suggest that prostate enlargement due to benign prostatic
  hyperplasia mechanically impedes prostate cancer growth, Proceedings of the
  National Academy of Sciences 116~(4) (2019) 1152--1161.

\bibitem{cottrell2006isogeometric}
J.~A. Cottrell, A.~Reali, Y.~Bazilevs, T.~J. Hughes, Isogeometric analysis of
  structural vibrations, Computer methods in applied mechanics and engineering
  195~(41-43) (2006) 5257--5296.

\bibitem{cottrell2007studies}
J.~A. Cottrell, T.~Hughes, A.~Reali, Studies of refinement and continuity in
  isogeometric structural analysis, Computer methods in applied mechanics and
  engineering 196~(41-44) (2007) 4160--4183.

\bibitem{hughes2014finite}
T.~J. Hughes, J.~A. Evans, A.~Reali, Finite element and nurbs approximations of
  eigenvalue, boundary-value, and initial-value problems, Computer Methods in
  Applied Mechanics and Engineering 272 (2014) 290--320.

\bibitem{borden2014higher}
M.~J. Borden, T.~J. Hughes, C.~M. Landis, C.~V. Verhoosel, A higher-order
  phase-field model for brittle fracture: Formulation and analysis within the
  isogeometric analysis framework, Computer Methods in Applied Mechanics and
  Engineering 273 (2014) 100--118.

\bibitem{gallistl2017stability}
D.~Gallistl, P.~Huber, D.~Peterseim, On the stability of the rayleigh--ritz
  method for eigenvalues, Numerische Mathematik 137 (2017) 339--351.

\bibitem{hughes2012finite}
T.~J. Hughes, The finite element method: linear static and dynamic finite
  element analysis, Courier Corporation, 2012.

\bibitem{bathe2006finite}
K.-J. Bathe, Finite element procedures, Klaus-Jurgen Bathe, 2006.

\bibitem{fried1975finite}
I.~Fried, D.~S. Malkus, Finite element mass matrix lumping by numerical
  integration with no convergence rate loss, International Journal of Solids
  and Structures 11~(4) (1975) 461--466.

\bibitem{cohen1994higher}
G.~Cohen, P.~Joly, N.~Tordjman, Higher-order finite elements with mass-lumping
  for the 1d wave equation, Finite elements in analysis and design 16~(3-4)
  (1994) 329--336.

\bibitem{hinton1976note}
E.~Hinton, T.~Rock, O.~Zienkiewicz, A note on mass lumping and related
  processes in the finite element method, Earthquake Engineering \& Structural
  Dynamics 4~(3) (1976) 245--249.

\bibitem{duczek2019mass}
S.~Duczek, H.~Gravenkamp, Mass lumping techniques in the spectral element
  method: On the equivalence of the row-sum, nodal quadrature, and diagonal
  scaling methods, Computer Methods in Applied Mechanics and Engineering 353
  (2019) 516--569.

\bibitem{voet2023mathematical}
Y.~Voet, E.~Sande, A.~Buffa, A mathematical theory for mass lumping and its
  generalization with applications to isogeometric analysis, Computer Methods
  in Applied Mechanics and Engineering 410 (2023) 116033.

\bibitem{voet2024mass}
Y.~Voet, E.~Sande, A.~Buffa, Mass lumping and outlier removal strategies for
  complex geometries in isogeometric analysis, arXiv preprint arXiv:2402.14956
  (2024).

\bibitem{li2022significance}
X.~Li, D.~Wang, On the significance of basis interpolation for accurate lumped
  mass isogeometric formulation, Computer Methods in Applied Mechanics and
  Engineering 400 (2022) 115533.

\bibitem{li2024interpolatory}
X.~Li, S.~Hou, D.~Wang, An interpolatory basis lumped mass isogeometric
  formulation with rigorous assessment of frequency accuracy for kirchhoff
  plates, Thin-Walled Structures 197 (2024) 111639.

\bibitem{anitescu2019isogeometric}
C.~Anitescu, C.~Nguyen, T.~Rabczuk, X.~Zhuang, Isogeometric analysis for
  explicit elastodynamics using a dual-basis diagonal mass formulation,
  Computer Methods in Applied Mechanics and Engineering 346 (2019) 574--591.

\bibitem{nguyen2023towards}
T.-H. Nguyen, R.~R. Hiemstra, S.~Eisentr{\"a}ger, D.~Schillinger, Towards
  higher-order accurate mass lumping in explicit isogeometric analysis for
  structural dynamics, Computer Methods in Applied Mechanics and Engineering
  417 (2023) 116233.

\bibitem{hiemstra2023higher}
R.~R. Hiemstra, T.-H. Nguyen, S.~Eisentrager, W.~Dornisch, D.~Schillinger,
  Higher order accurate mass lumping for explicit isogeometric methods based on
  approximate dual basis functions, arXiv preprint arXiv:2310.13379 (2023).

\bibitem{adam2015stable}
C.~Adam, S.~Bouabdallah, M.~Zarroug, H.~Maitournam, Stable time step estimates
  for nurbs-based explicit dynamics, Computer Methods in Applied Mechanics and
  Engineering 295 (2015) 581--605.

\bibitem{hallquist2005ls}
J.~Hallquist, LS-DYNA Theory manual, Livermore Software Technology Corporation
  (2005).

\bibitem{leidinger2019explicit}
L.~Leidinger, M.~Breitenberger, A.~Bauer, S.~Hartmann, R.~W{\"u}chner, K.-U.
  Bletzinger, F.~Duddeck, L.~Song, Explicit dynamic isogeometric b-rep analysis
  of penalty-coupled trimmed nurbs shells, Computer Methods in Applied
  Mechanics and Engineering 351 (2019) 891--927.

\bibitem{marussig2017stable}
B.~Marussig, J.~Zechner, G.~Beer, T.-P. Fries, Stable isogeometric analysis of
  trimmed geometries, Computer Methods in Applied Mechanics and Engineering 316
  (2017) 497--521.

\bibitem{de2017condition}
F.~de~Prenter, C.~V. Verhoosel, G.~J. van Zwieten, E.~H. van Brummelen,
  Condition number analysis and preconditioning of the finite cell method,
  Computer Methods in Applied Mechanics and Engineering 316 (2017) 297--327.

\bibitem{de2019preconditioning}
F.~de~Prenter, C.~Verhoosel, E.~Van~Brummelen, Preconditioning immersed
  isogeometric finite element methods with application to flow problems,
  Computer Methods in Applied Mechanics and Engineering 348 (2019) 604--631.

\bibitem{de2023stability}
F.~de~Prenter, C.~V. Verhoosel, E.~H. van Brummelen, M.~G. Larson, S.~Badia,
  Stability and conditioning of immersed finite element methods: analysis and
  remedies, Archives of Computational Methods in Engineering (2023) 1--40.

\bibitem{stoter2023critical}
S.~K. Stoter, S.~C. Divi, E.~H. van Brummelen, M.~G. Larson, F.~de~Prenter,
  C.~V. Verhoosel, Critical time-step size analysis and mass scaling by
  ghost-penalty for immersogeometric explicit dynamics, Computer Methods in
  Applied Mechanics and Engineering 412 (2023) 116074.

\bibitem{eisentrager2024eigenvalue}
S.~Eisentr{\"a}ger, L.~Radtke, W.~Garhuom, S.~L{\"o}hnert, A.~D{\"u}ster,
  D.~Juhre, D.~Schillinger, An eigenvalue stabilization technique for immersed
  boundary finite element methods in explicit dynamics, Computers \&
  Mathematics with Applications 166 (2024) 129--168.

\bibitem{fassbender2024implicit}
C.~Fa{\ss}bender, T.~B{\"u}rchner, P.~Kopp, E.~Rank, S.~Kollmannsberger,
  Implicit-explicit time integration for the immersed wave equation, Computers
  \& Mathematics with Applications 163 (2024) 1--13.

\bibitem{messmer2021isogeometric}
M.~Me{\ss}mer, L.~F. Leidinger, S.~Hartmann, F.~Bauer, F.~Duddeck,
  R.~W{\"u}chner, K.-U. Bletzinger, Isogeometric analysis on trimmed solids: a
  b-spline-based approach focusing on explicit dynamics, in: Proceedings of the
  13th European LS-DYNA conference, 2021.

\bibitem{messmer2022efficient}
M.~Me{\ss}mer, T.~Teschemacher, L.~F. Leidinger, R.~W{\"u}chner, K.-U.
  Bletzinger, Efficient cad-integrated isogeometric analysis of trimmed solids,
  Computer Methods in Applied Mechanics and Engineering 400 (2022) 115584.

\bibitem{coradello2021accurate}
L.~Coradello, Accurate isogeometric methods for trimmed shell structures, Ph.D.
  thesis, {\'E}cole polytechnique f{\'e}d{\'e}rale de Lausanne (2021).

\bibitem{radtke2024analysis}
L.~Radtke, M.~Torre, T.~J. Hughes, A.~D{\"u}ster, G.~Sangalli, A.~Reali, An
  analysis of high order fem and iga for explicit dynamics: Mass lumping and
  immersed boundaries, International Journal for Numerical Methods in
  Engineering (2024) e7499.

\bibitem{bioli2024theoretical}
I.~Bioli, Y.~Voet, A theoretical study on the effect of mass lumping on the
  discrete frequencies in immersogeometric analysis, arXiv preprint
  arXiv:2410.17857 (2024).

\bibitem{zienkiewicz2005finite}
O.~C. Zienkiewicz, R.~L. Taylor, J.~Z. Zhu, The finite element method: its
  basis and fundamentals, Elsevier, 2005.

\bibitem{belytschko2014nonlinear}
T.~Belytschko, W.~K. Liu, B.~Moran, K.~Elkhodary, Nonlinear finite elements for
  continua and structures, John wiley \& sons, 2014.

\bibitem{evans2022partial}
L.~C. Evans, Partial differential equations, Vol.~19, American Mathematical
  Society, 2022.

\bibitem{quarteroni2009numerical}
A.~Quarteroni, Numerical models for differential problems, Vol.~2, Springer,
  2009.

\bibitem{vazquez2016new}
R.~V{\'a}zquez, A new design for the implementation of isogeometric analysis in
  octave and matlab: Geopdes 3.0, Computers \& Mathematics with Applications
  72~(3) (2016) 523--554.

\bibitem{stewart1990matrix}
G.~W. Stewart, J.-g. Sun, Matrix perturbation theory, Computer science and
  scientific computing, Academic Press, 1990.

\bibitem{hochbruck1999gautschi}
M.~Hochbruck, C.~Lubich, A gautschi-type method for oscillatory second-order
  differential equations, Numerische Mathematik 83 (1999) 403--426.

\bibitem{grimm2005error}
V.~Grimm, On error bounds for the gautschi-type exponential integrator applied
  to oscillatory second-order differential equations, Numerische Mathematik 100
  (2005) 71--89.

\bibitem{grimm2006error}
V.~Grimm, M.~Hochbruck, Error analysis of exponential integrators for
  oscillatory second-order differential equations, Journal of Physics A:
  Mathematical and General 39~(19) (2006) 5495.

\bibitem{voet2020computation}
Y.~Voet, On the computation of matrix functions for the finite element solution
  of linear elasticity problems in dynamics, Tech. rep., {\'E}cole
  polytechnique f{\'e}d{\'e}rale de Lausanne (2020).

\bibitem{hollig2001weighted}
K.~H{\"o}llig, U.~Reif, J.~Wipper, Weighted extended b-spline approximation of
  dirichlet problems, SIAM Journal on Numerical Analysis 39~(2) (2001)
  442--462.

\bibitem{hollig2003finite}
K.~H{\"o}llig, Finite element methods with B-splines, SIAM, 2003.

\bibitem{buffa2020minimal}
A.~Buffa, R.~Puppi, R.~V{\'a}zquez, A minimal stabilization procedure for
  isogeometric methods on trimmed geometries, SIAM Journal on Numerical
  Analysis 58~(5) (2020) 2711--2735.

\bibitem{burman2023extension}
E.~Burman, P.~Hansbo, M.~G. Larson, K.~Larsson, Extension operators for trimmed
  spline spaces, Computer Methods in Applied Mechanics and Engineering 403
  (2023) 115707.

\bibitem{haslinger2009new}
J.~Haslinger, Y.~Renard, A new fictitious domain approach inspired by the
  extended finite element method, SIAM Journal on Numerical Analysis 47~(2)
  (2009) 1474--1499.

\bibitem{sande2019sharp}
E.~Sande, C.~Manni, H.~Speleers, {Sharp error estimates for spline
  approximation: Explicit constants, n-widths, and eigenfunction convergence},
  Mathematical Models and Methods in Applied Sciences 29~(06) (2019)
  1175--1205.

\bibitem{manni2022application}
C.~Manni, E.~Sande, H.~Speleers, Application of optimal spline subspaces for
  the removal of spurious outliers in isogeometric discretizations, Computer
  Methods in Applied Mechanics and Engineering 389 (2022) 114260.

\bibitem{hiemstra2021removal}
R.~R. Hiemstra, T.~J. Hughes, A.~Reali, D.~Schillinger, Removal of spurious
  outlier frequencies and modes from isogeometric discretizations of second-and
  fourth-order problems in one, two, and three dimensions, Computer Methods in
  Applied Mechanics and Engineering 387 (2021) 114115.

\bibitem{ern2004theory}
A.~Ern, J.-L. Guermond, Theory and practice of finite elements, Vol. 159,
  Springer, 2004.

\end{thebibliography}
\end{document}